\tikzset{mybrace/.style={decoration={brace,raise=1.8mm},decorate}}
\numberwithin{equation}{section}
\theoremstyle{plain}
\newtheorem{theorem}{Theorem}[section]
\newtheorem{definition}[theorem]{Definition}
\newtheorem{lemma}[theorem]{Lemma}
\newtheorem{proposition}[theorem]{Proposition}
\theoremstyle{definition}
\newtheorem{remark}[theorem]{Remark}
\newtheorem{assumption}[theorem]{Assumption}
\bf\mathversion{bold}}{\thesubsection\kern1em}{0pt}{}
\bf\mathversion{bold}}{}{0pt}{}
\def\note#1{\par\smallskip%
	\noindent\kern-0.01\hsize%
	{\setlength\fboxrule{0pt}\fbox{\setlength\fboxrule{0.5pt}\fbox{%
				\llap{$\boldsymbol\Longrightarrow$ }%
				\vtop{\hsize=0.98\hsize\parindent=0cm\small\rm #1}%
				\rlap{$\enskip\,\boldsymbol\Longleftarrow$}
	}}}%
}
\def\given{\mskip 0.5mu plus 0.25mu\vert\mskip 0.5mu plus 0.15mu}
\newcounter{bracketlevel}%
\def\@bracketfactory#1#2#3#4#5#6{%
	\expandafter\def\csname#1\endcsname##1{%
		\global\advance\c@bracketlevel 1\relax%
		\global\expandafter\let\csname @middummy\alph{bracketlevel}\endcsname\given%
		\global\def\given{\mskip#5\csname#4\endcsname\vert\mskip#6}\csname#4l\endcsname#2##1\csname#4r\endcsname#3%
		\global\expandafter\let\expandafter\given\csname @middummy\alph{bracketlevel}\endcsname%
		\global\advance\c@bracketlevel -1\relax%
	}%
}
\def\bracketfactory#1#2#3{%
	\@bracketfactory{#1}{#2}{#3}{relax}{0.5mu plus 0.25mu}{0.5mu plus 0.15mu}
	\@bracketfactory{b#1}{#2}{#3}{big}{1mu plus 0.25mu minus 0.25mu}{0.6mu plus 0.15mu minus 0.15mu}
	\@bracketfactory{bb#1}{#2}{#3}{Big}{2.4mu plus 0.8mu minus 0.8mu}{1.8mu plus 0.6mu minus 0.6mu}
	\@bracketfactory{bbb#1}{#2}{#3}{bigg}{3.2mu plus 1mu minus 1mu}{2.4mu plus 0.75mu minus 0.75mu}
	\@bracketfactory{bbbb#1}{#2}{#3}{Bigg}{4mu plus 1mu minus 1mu}{3mu plus 0.75mu minus 0.75mu}
}
\newcounter{ctr}\loop\stepcounter{ctr}\edef\X{\@Alph\c@ctr}%
\edef\csname s\X\endcsname{\noexpand\mathscr{\X}}
\edef\csname c\X\endcsname{\noexpand\mathcal{\X}}
\edef\csname b\X\endcsname{\noexpand\boldsymbol{\X}}
\edef\csname I\X\endcsname{\noexpand\mathbb{\X}}
\let\@IE\IE\let\IE\undefined
\newcommand{\IE}{\mathop{{}\@IE}\mathopen{}}
\let\@IP\IP\let\IP\undefined
\newcommand{\IP}{\mathop{{}\@IP}}
\def\^#1{\relax\ifmmode {\mathaccent"705E #1} \else {\accent94 #1}\fi}
\def\~#1{\relax\ifmmode {\mathaccent"707E #1} \else {\accent"7E #1}\fi}
\def\*#1{\relax#1^\ast}
\edef\-#1{\relax\noexpand\ifmmode {\noexpand\bar{#1}} \noexpand\else \-#1\noexpand\fi}
\def\>#1{\vec{#1}}
\def\.#1{\dot{#1}}
\def\atop{\@@atop}
\renewcommand{\leq}{\leqslant}
\renewcommand{\geq}{\geqslant}
\newcommand{\ro}{\mathrm{o}}
\newcommand{\rO}{\mathrm{O}}
\newcommand{\ri}{\mathrm{i}}
\newcommand\indep{\protect\mathpalette{\protect\@indep}{\perp}}
\def\@indep#1#2{\mathrel{\rlap{$#1#2$}\mkern2mu{#1#2}}}
\def\parsetime#1#2#3#4#5#6{#1#2:#3#4}
\def\parsedate#1:20#2#3#4#5#6#7#8+#9\empty{20#2#3-#4#5-#6#7 \parsetime #8}
\def\moddate{\expandafter\parsedate\p\mathrm{d}Ffilemoddate{\jobname.tex}\empty}
\date{}
\theoremstyle{definition}
\theoremstyle{remark}
\theoremstyle{definition}
\theoremstyle{plain}
\theoremstyle{plain}
\theoremstyle{plain}
\theoremstyle{plain}
\providecommand{\conditionname}{Condition}
\providecommand{\definitionname}{Definition}
\providecommand{\lemmaname}{Lemma}
\providecommand{\propositionname}{Proposition}
\providecommand{\remarkname}{Remark}
\providecommand{\corollaryname}{Corollary}
\providecommand{\theoremname}{Theorem}
\title{Tracy-Widom distribution for the edge eigenvalues of elliptical model}
\author[1]{Xiucai Ding \thanks{E-mail: xcading@ucdavis.edu. The author is partially supported by NSF-DMS 2113489. }}
\author[2]{Jiahui Xie \thanks{E-mail:  jiahui.xie@u.nus.edu.}}
\affil[1]{Department of Statistics, University of California, Davis}
\affil[2]{Department of Statistics and Data Science, National University of Singapore}
\begin{document}

\maketitle
\begin{abstract}
In this paper, we study the largest eigenvalues of sample covariance matrices with elliptically distributed data. We consider the sample covariance matrix $Q=YY^*,$ where the data matrix $Y \in \mathbb{R}^{p \times n}$ contains i.i.d. $p$-dimensional observations  
$\mathbf{y}_i=\xi_iT\mathbf{u}_i,\;i=1,\dots,n.$ Here $\mathbf{u}_i$ is distributed on the unit sphere, $\xi_i \sim \xi$ is independent of $\mathbf{u}_i$ and $T^*T=\Sigma$ is some deterministic matrix. Under some mild regularity assumptions of $\Sigma,$ assuming $\xi^2$ has bounded support and certain proper behavior near its edge so that the limiting spectral distribution (LSD) of $Q$ has a square decay behavior near the spectral edge, we prove that the Tracy-Widom law holds for the largest eigenvalues of $Q$ when $p$ and $n$ are comparably large.  
\end{abstract}

\section{Introduction}
Large dimensional sample covariance matrices play important roles in modern statistical learning theory. Understanding the behavior of the largest eigenvalues of the sample covariance matrices is crucial in many statistical techniques, especially the principal component analysis (PCA). Consider $\mathbf{y}_i \in \mathbb{R}^p, 1 \leq i \leq n,$ be a sequence of i.i.d. mean zero random vectors, after properly being scaled, the associated sample covariance matrix can be written as 
\begin{equation*}
Q=\sum_{i=1}^n \mathbf{y}_i \mathbf{y}_i^*.
\end{equation*} 
 Lots of efforts have been made when $\mathbf{y}_i=\Sigma^{1/2} \mathbf{x}_i,$ where $\Sigma$ is some positive define matrix representing the population covariance structures and $\mathbf{x}_i$ contains i.i.d. mean zero entries with variance $n^{-1}.$ In summary, under certain regularity assumptions on $\Sigma$ and moment conditions on $\mathbf{x}_i,$ when $p$ and $n$ are comparably large, after being properly scaled and centered, the largest eigenvalue of $Q$ follows Tracy-Widom (TW) distribution asymptotically. To list but a few, see \cite{BaoPanandZhou2015,DingandYang2018,ding2022tracy,Elkaroui2007,10.1214/aos/1009210544, LS,PillaiandYin2014}.

In this work, we extend this line of research to the setting when $\mathbf{y}_i$ follows elliptical distribution  \cite{fang1990}
\begin{equation}\label{eq_datamodelmodel}
\mathbf{y}_i=\xi_i T\mathbf{u}_i\in\mathbb{R}^p,
\end{equation}
where $\xi_i \in\mathbb{R}$ are i.i.d. random variables, $T^{*}T=\Sigma\in\mathbb{R}^{p\times p}$ is some positive definite deterministic matrix, and $\mathbf{u}_i \in\mathbb{R}^p, 1 \leq i \leq n,$ are distributed on the unit sphere $\mathrm{U}(\mathbb{S}^{p-1})$ that is independent of $\{\xi_i\}$.  We can write the data matrix {$Y=TUD$, where $U=(\mathbf{u}_i)$} and $D$ is a diagonal matrix containing $\{\xi_i\}$ satisfying certain regularity assumptions; see Assumption \ref{assum_D}. Then the sample covariance matrix $Q$ can be rewritten as 
{
\begin{gather}\label{eq_model_ell}
Q=TUD^2U^{*}T^{*}.
\end{gather}
}
We consider the high dimensional setting that for some small fixed constant $0<\tau<1$
\begin{equation}\label{ass1}
 \tau \leq \phi:=\frac{p}{n} \leq \tau^{-1}. 
\end{equation}

It has been shown in \cite{el2009concentration, 8704905}, the empirical spectral distribution (ESD) of $Q$ can be
best formulated by its Stieltjes transform which can be described via a system of two equations. Much less is known about the individual eigenvalues except a recent one \cite{Wen2021}. However, \cite{Wen2021} requires that $\{\xi_i^2\}$ are infinitely divisible so that $D^2$ is almost deterministic. As a consequence, the system of two equations will degenerate to only one. In the current paper, we consider a more challenging setting that $\xi_i^2$ are truly random so that the limiting ESD will be governed by two equations. Under mild conditions, we prove that the largest eigenvalue of $Q$ follows Tracy-Widom distribution asymptotically, after being properly centered and scaled; see Theorem \ref{thm_main_edgeuniversality}. 

This paper is organized as follows. In Section  \ref{sec_defandmainresult}, we provide some useful definitions and state the main results. In Section \ref{sec_preliminary}, we provide some preliminary results. In Section \ref{sec_Proofofmain}, we prove the main results. Finally, the proof of local laws is sketched in Appendix \ref{sec_prooflocalaw}.

\section{Definitions and main results}\label{sec_defandmainresult}
 
\subsection{The model and asymptotic laws}

Throughout the paper, we consider the observations  (\ref{eq_datamodelmodel}) and their associated sample covariance matrix (\ref{eq_model_ell}). Due to rotational invariance, we assume that $\Sigma$ is a diagonal matrix so that  $\Sigma=\operatorname{diag}\left\{ \sigma_1, \cdots, \sigma_p \right\}.$ Moreover, for some small constant $0<\tau<1,$ we assume that 
\begin{equation}\label{ass2}
\tau \leq \sigma_p \leq \sigma_{p-1} \leq \cdots \leq \sigma_2 \leq \sigma_1 \leq \tau^{-1}. 
\end{equation}
For the diagonal matrix $D^2=\operatorname{diag} \left\{ \xi_1^2, \cdots, \xi_n^2 \right\}$ in (\ref{eq_model_ell}) (or equivalently the random variables $\xi_i, 1 \leq i \leq n,$ in (\ref{eq_datamodelmodel})), we impose the following assumptions. Similar conditions have been used in \cite{kwak2021extremal,lee2015edge,lee2016extremal,LSSY}.  
\begin{assumption}\label{assum_D}
We assume $\xi_i^2 \sim \xi^2, 1 \leq i \leq n,$ are i.i.d. random variables. Moreover,  we assume that
 $\xi^2$ has a bounded support on $(0,l]$ for fixed some constant
$l>0.$ Moreover, for some constant $d>-1,$ we assume that  
\begin{equation}\label{assum_tailrate}
\mathbb{P}( l-\xi^2 \leq x) \asymp x^{d+1}.
\end{equation}
\end{assumption}

Then we prepare some notations. For the sample covariance matrix $Q$ in (\ref{eq_model_ell}) and its companion $\mathcal Q,$ 
{
\begin{equation}\label{eq_gram}
\mathcal Q:=D U^*T^* T U D \equiv DU^*\Sigma UD,
\end{equation}  
}
their empirical spectral distributions (ESD) are defined as $\mu_{Q}:=p^{-1}\sum_{i=1}^p \delta_{\lambda_i(Q)}$ and $\mu_{\mathcal{Q}}:=n^{-1}\sum_{j=1}^n \delta_{\lambda_j(\mathcal{Q})},$ respectively.
Correspondingly, the Stieltjes transforms are denoted as
\begin{equation}\label{eq_mq}
m_{Q}:=\int\frac{1}{x-z}\mu_{Q},\quad m_{\mathcal{Q}}:=\int\frac{1}{x-z}\mu_{\mathcal{Q}}, \ z \in \mathbb{C}_+. 
\end{equation}
Since $Q$ and $\mathcal{Q}$ share the same non-trivial eigenvalues, it suffices to study $\mu_{Q}.$  To characterize the limit of $\mu_{Q},$ we consider a system of equations. 
\begin{definition}[System of consistent equations]\label{defn_couplesystem} For $z \in \mathbb{C}_+,$ we define the triplets $(m_{1n}, m_{2n}, m_n) \in \mathbb{C}^3_+,$  via the following system of equations.
\begin{gather}\label{eq_systemequationsm1m2elliptical}
    m_{1n}(z)=\frac{1}{p}\sum_{i=1}^p\frac{\sigma_i}{-z(1+\sigma_i  m_{2n}(z))},\quad m_{2n}(z)=\frac{1}{p}\sum_{i=1}^n\frac{\xi_i^2}{-z(1+\xi^2_im_{1n}(z))}\\
    m_{n}(z)=\frac{1}{p}\sum_{i=1}^p\frac{1}{-z(1+\sigma_i  m_{2n}(z))}. \nonumber
\end{gather}
Moreover, we define the 
the triplets $(m_{1n,c}, m_{2n,c}, m_{n,c}) \in \mathbb{C}^3_+,$  via the following system of equations.
\begin{gather}\label{eq_systemequationsm1m2ellipticallimit}
    m_{1n,c}(z)=\frac{1}{p}\sum_{i=1}^p\frac{\sigma_i}{-z(1+\sigma_i  m_{2n,c}(z))},\quad m_{2n,c}(z)=\phi^{-1}\int_0^l \frac{s}{-z(1+s m_{1n,c}(z))} \mathrm{d}F(s)\\
    m_{n,c}(z)=\frac{1}{p}\sum_{i=1}^p\frac{1}{-z(1+\sigma_i  m_{2n,c}(z))}, \nonumber
\end{gather}
where $F(s)$ is the cumulative distribution (CDF) of $\xi^2$ and recall $\phi$ in (\ref{ass1}). 
\end{definition}
We point out that according to Lemma \ref{defn_Omega} below, (\ref{eq_systemequationsm1m2ellipticallimit}) can be regarded as an asymptotic deterministic equivalent of (\ref{eq_systemequationsm1m2elliptical}). To avoid repetitions, we summarize some assumptions as follows. 
\begin{assumption}\label{assum_techincial}
We assume (\ref{eq_datamodelmodel}), (\ref{ass1}), (\ref{ass2}) and Assumption \ref{assum_D} hold. 
\end{assumption}


\begin{theorem}\label{lem_solutionsystem}
Suppose Assumption \ref{assum_techincial} holds. Then conditional on some event $\Omega \equiv \Omega_n$ (defined in Lemma \ref{defn_Omega}) that $\mathbb{P}(\Omega)=1-\ro(1),$ for any $z \in \mathbb{C}_+,$ when $n$ is sufficiently large, there exists a unique solution $(m_{1n}(z), m_{2n}(z), m_{n}(z)) \in \mathbb{C}_+^3$ to the system of equations in (\ref{eq_systemequationsm1m2elliptical}). Moreover, $m_n(z)$ is the Stieltjes transform of some probability measure $\mu \equiv \mu_n$ defined on $\mathbb{R}$ which can be obtained using the inversion formula and has a continuous derivative $\rho$ on $(0, \infty)$. Similar results hold for $(m_{1n,c}, m_{2n,c}, m_{n,c})$ in (\ref{eq_systemequationsm1m2ellipticallimit}) unconditionally. 
\end{theorem}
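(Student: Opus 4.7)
The plan is to first solve the closed subsystem for $(m_{1n},m_{2n})$ determined by the first two equations of \eqref{eq_systemequationsm1m2elliptical}, and then to read off $m_n$ from the explicit third equation. The deterministic system \eqref{eq_systemequationsm1m2ellipticallimit} is handled by exactly the same argument with the CDF $F$ in place of the empirical CDF of $\{\xi_i^2\}$; the sole role of the event $\Omega$ is to guarantee, with probability $1-\ro(1)$, quantitative regularity of the empirical distribution of $\{\xi_i^2\}$ (support contained in $[0,l]$, a positive fraction bounded away from $0$, and standard Glivenko--Cantelli closeness to $F$), consistent with the definition of $\Omega$ in Lemma~\ref{defn_Omega}.

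For existence and uniqueness of $(m_{1n},m_{2n})\in\mathbb{C}_+^2$, fix $z\in\mathbb{C}_+$ and study the analytic map
\[
\Phi_z(w_1,w_2)=\Bigl(\frac{1}{p}\sum_{i=1}^p\frac{\sigma_i}{-z(1+\sigma_i w_2)},\ \frac{1}{p}\sum_{j=1}^n\frac{\xi_j^2}{-z(1+\xi_j^2 w_1)}\Bigr).
\]
A direct computation gives the imaginary-part identity
\[
\operatorname{Im}\Bigl(\frac{\sigma}{-z(1+\sigma w)}\Bigr)=\frac{\sigma\operatorname{Im} z+\sigma^2\operatorname{Im}(zw)}{\abs{z(1+\sigma w)}^2},
\]
which, combined with an algebraic manipulation of the system (multiplying through by $z$ and using the second equation to rewrite $\operatorname{Im}(zw_1)$ in the first), forces any fixed point to lie in $\mathbb{C}_+^2$. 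Existence I would then obtain either via the Earle--Hamilton theorem after establishing a strict inclusion $\Phi_z(\mathbb{C}_+^2)\Subset\mathbb{C}_+^2$ on a bounded invariant subset, or equivalently through a Picard iteration initialized at $(w_1,w_2)=(\ri,\ri)$ combined with a compactness/normal-family argument. For uniqueness, subtracting two hypothetical solutions $(w_1,w_2),(w_1',w_2')\in\mathbb{C}_+^2$ produces a linear system whose coefficient matrix has operator norm strictly less than one, because $\abs{1+\sigma_i w_2}$ and $\abs{1+\xi_j^2 w_1}$ are bounded below uniformly on compact subsets of $\mathbb{C}_+$; this forces $w_1=w_1'$, $w_2=w_2'$. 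This strategy parallels the analysis in \cite{8704905,Wen2021}.

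Once $(m_{1n},m_{2n})$ is constructed, the third equation defines $m_n$ explicitly and transfers analyticity and positivity of the imaginary part from $(m_{1n},m_{2n})$ to $m_n$. For the Nevanlinna--Herglotz representation I would verify the normalization
\[
\lim_{y\to\infty}\ri y\cdot m_n(\ri y)=-1,
\]
which follows from $m_{2n}(\ri y)=\rO(y^{-1})$ as $y\to\infty$ (obtained directly from the defining equations by inserting the trivial bound $\abs{m_{2n}}\leq C$ on large domains and iterating). Combined with $\operatorname{Im} m_n>0$ on $\mathbb{C}_+$, this identifies $m_n$ as the Stieltjes transform of a unique probability measure $\mu_n$ on $\mathbb{R}$, recoverable by Stieltjes inversion.

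For the continuous density $\rho$ on $(0,\infty)$ one must extend $(m_{1n}(z),m_{2n}(z))$ continuously to $z=x+\ri 0$ for $x>0$. Away from the spectrum of $\mu_n$, the Jacobian of the fixed-point system is nondegenerate and the implicit function theorem gives a smooth real-analytic continuation with $\rho\equiv 0$. Inside the spectrum, $\operatorname{Im} m_{2n}(x+\ri 0)$ is bounded away from $0$, which again makes the system non-degenerate and yields continuity of $\rho$ with $\rho>0$. The main obstacle, and the essential use of Assumption~\ref{assum_D}, is the analysis at the spectral edge: there the Jacobian of $\Phi_z$ becomes singular, and the tail condition \eqref{assum_tailrate} with $d>-1$ must be used to show that the solution exhibits square-root-type behavior at the edge, so that $\rho$ extends continuously (with one-sided behavior) through the edge. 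This boundary analysis, which parallels the edge treatments in \cite{BaoPanandZhou2015,ding2022tracy,kwak2021extremal} for simpler one-equation MP systems but has to be carried out for the present coupled two-equation system, is the technical core of the argument.
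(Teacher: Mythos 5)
The paper itself gives no proof of this statement, only a pointer to \cite[Theorem 2]{el2009concentration}, \cite[Theorem 1]{PaulandSilverstein2009} and \cite[Theorem 2.4]{ding2021spiked}, so there is no ``paper's own argument'' to compare against line by line. Your overall blueprint --- solve the closed $(m_{1n},m_{2n})$ subsystem by a holomorphic fixed-point argument, read off $m_n$, then invoke a Nevanlinna--Herglotz normalization to get a probability measure and use Stieltjes inversion for the density --- is the standard route and is consistent with what those references actually do.

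There is, however, a concrete gap in your uniqueness step. Subtracting two solutions yields
\begin{equation*}
m_{1}-m_{1}'=(m_{1}-m_{1}')\cdot A\cdot B,\qquad
A=\frac{1}{zp}\sum_{i}\frac{\sigma_i^2}{(1+\sigma_i m_{2})(1+\sigma_i m_{2}')},\quad
B=\frac{1}{zp}\sum_{j}\frac{\xi_j^4}{(1+\xi_j^2 m_{1})(1+\xi_j^2 m_{1}')},
\end{equation*}
and to conclude you need $|A|\,|B|<1$. Your justification --- that $|1+\sigma_i w_2|$ and $|1+\xi_j^2 w_1|$ are bounded below --- gives only that $A$ and $B$ are \emph{finite}; it does not make the product small, and for generic $z\in\mathbb{C}_+$ and parameters each factor can easily exceed $1$ in modulus. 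The actual mechanism (as in Silverstein--Choi and in the Paul--Silverstein argument the paper cites) is a Cauchy--Schwarz step that bounds $\tfrac1p\sum_i \sigma_i^2/|z(1+\sigma_i m_{2})|^2$ by a ratio involving $\operatorname{Im}(m_{1})$ and $\operatorname{Im}(z m_{2})$, obtained from the imaginary part of the self-consistent equation itself, and the strict inequality comes precisely from $\eta=\operatorname{Im}z>0$. Without this positivity bookkeeping the contraction does not follow. (Similarly, the imaginary-part identity you state does not by itself ``force any fixed point to lie in $\mathbb{C}_+^2$''; that requires the same kind of algebraic manipulation made explicit.) A second, lesser issue is the claim that $\operatorname{Im} m_{2n}(x+\ri0)$ is bounded away from zero in the interior of the support; this is not automatic for a coupled two-equation system and is not needed for mere continuity of $\rho$ --- the Silverstein--Choi analytic-continuation argument gives real-analyticity of $\rho$ wherever $\rho>0$ and continuity up to the support boundary without that assumption, and it is the route the cited references follow. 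Note also that this theorem asserts only continuity of $\rho$ on $(0,\infty)$; the square-root edge behavior you invoke belongs to Lemma~\ref{thm_main_asymptotic laws} and requires the additional Assumption~\ref{assum_techniqueSigma}, so bringing \eqref{assum_tailrate} in here overstates what must be proved.
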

\begin{proof}
The proofs can be obtained by following lines of the arguments  of \cite[Theorem 2]{el2009concentration} and \cite[Theorem 1]{PaulandSilverstein2009}, or see \cite[Theorem 2.4]{ding2021spiked} verbatim. We omit the details. 
\end{proof}

Thanks to Theorem \ref{lem_solutionsystem}, it is easy to see that the study of the system of equations, for example (\ref{eq_systemequationsm1m2elliptical}),  can be reduced to the analysis of 
\begin{equation}\label{eq_functionFequal}
F_p(m_{1n}(z),z)=0, \quad z\in\mathbb{C}_{+},
\end{equation}
where $F_p(\cdot, \cdot)$ are defined as follows
\begin{equation}\label{eq:F(m,z)}
    F_p(m_{1n}(z),z)
    =\frac{1}{p}\sum_{i=1}^p\frac{\sigma_i}{-z+\frac{\sigma_i}{p}\sum_{j=1}^n\frac{\xi^2_j}{1+\xi^2_j m_{1n}(z)}}-m_{1n}(z).
\end{equation} 
Moreover, the right-most edge of the support can also be characterized using (\ref{eq:F(m,z)}). Denote $\rho, \rho_1, \rho_2, \rho_c, \rho_{1c}$ and $\rho_{2c}$ as the density functions associated with the Stieltjes transforms $m_n, m_{1n}, m_{2n}, m_{n,c}, m_{1n,c}$ and $m_{2n,c}$ in Theorem \ref{lem_solutionsystem} via the inversion formula. 
Let $\lambda_+$ be the right-most edge of the support of $\rho$ and $L_+$ be that of $\rho_c.$ Then we have that 
\begin{lemma}\label{lem_edge} Suppose the assumptions of Theorem \ref{lem_solutionsystem} hold. Then condition on the event $\Omega,$ we have that 
\begin{gather*}
    \operatorname{supp} \rho\cap(0,\infty)=\operatorname{supp}\rho_{1(2)}\cap(0,\infty).
\end{gather*}
Moreover, $(x,y)=(m_{1n}(\lambda_{+}),\lambda_{+})$ is the real solution that satisfies the following equations
\begin{gather}\label{eq: def of lambda+}
    F_p(x,y)=0,\quad \frac{\partial F_p}{\partial x}(x,y)=0.
\end{gather}
Similar results hold for $\rho_c, \rho_{1(2),c}$ and $L_+$ unconditionally when $F_p$ is replaced by $F_{p,c}$ defined using (\ref{eq_systemequationsm1m2ellipticallimit}) that 
\begin{equation}\label{eq:generalfunctionlimit}
F_{p,c}(x,y)=\frac{1}{p} \sum_{i=1}^p  \frac{\sigma_i}{-y+ \phi^{-1} \sigma_i \int \frac{s}{1+s x} \mathrm{d} F(s)}-x.
\end{equation} 
\end{lemma}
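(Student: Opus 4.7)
My plan is to prove the three assertions in order, treating the random and deterministic systems in parallel. For the support equality, the key observation is that the first and third equations of (\ref{eq_systemequationsm1m2elliptical}) express $m_{1n}(z)$ and $m_n(z)$ as rational functions of $m_{2n}(z)$ with denominators of the form $-z(1+\sigma_i m_{2n}(z))$, while the second equation reverses the roles with denominators $-z(1+\xi_j^2 m_{1n}(z))$. Combined with (\ref{ass2}) and Assumption~\ref{assum_D}---so that $\sigma_i$ lies in a compact subset of $(0,\infty)$ and $\xi_j^2$ lies in $(0,l]$---this structure guarantees that the singular sets of $m_n, m_{1n}, m_{2n}$ in $(0,\infty)$ agree provided one can rule out accidental crossings $m_{2n}(x+\ri 0) = -\sigma_i^{-1}$ and $m_{1n}(x+\ri 0) = -\xi_j^{-2}$ on the complements of the respective supports. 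On the event $\Omega$, both $m_{1n}$ and $m_{2n}$ extend monotonically and real-analytically to open intervals in $(0,\infty)\setminus\operatorname{supp}\rho_{1(2)}$ by the standard Stieltjes inversion argument, and a Silverstein--Choi-type monotonicity analysis in the spirit of \cite{PaulandSilverstein2009} rules out the crossings, yielding the coincidence of supports on $(0,\infty)$.

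For the characterization of $\lambda_+$, I would eliminate $m_{2n}$ from (\ref{eq_systemequationsm1m2elliptical}) by direct substitution to obtain the self-consistent identity $F_p(m_{1n}(z),z)=0$ of (\ref{eq_functionFequal}) on $\mathbb{C}_+$. By continuity, this extends to the right edge as $F_p(m_{1n}(\lambda_+),\lambda_+)=0$, which is the first equation of (\ref{eq: def of lambda+}). Differentiating the identity in $z$ yields
\begin{equation*}
    \frac{\partial F_p}{\partial x}(m_{1n}(z),z)\, m_{1n}'(z) + \frac{\partial F_p}{\partial y}(m_{1n}(z),z) = 0.
\end{equation*}
The assumed square-root decay of $\rho$ at $\lambda_+$ provides the local expansion $m_{1n}(z) = m_{1n}(\lambda_+) + c\sqrt{z-\lambda_+} + \bigo(z-\lambda_+)$ with $c\neq 0$, so $m_{1n}'(z)\to\infty$ as $z\downarrow\lambda_+$. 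Since $\partial_y F_p(m_{1n}(\lambda_+),\lambda_+)$ is finite---the edge value of $m_{1n}$ satisfies $1+\xi_j^2 m_{1n}(\lambda_+)>0$ uniformly in $j$ on $\Omega$, hence the denominators appearing in (\ref{eq:F(m,z)}) are bounded away from zero---the identity forces $\partial_x F_p(m_{1n}(\lambda_+),\lambda_+)=0$, producing the second equation of (\ref{eq: def of lambda+}).

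The unconditional limit claim for $L_+$ and $F_{p,c}$ follows by running exactly the same two-step reasoning on (\ref{eq_systemequationsm1m2ellipticallimit}) and (\ref{eq:generalfunctionlimit}), with the deterministic measure $F$ replacing the empirical distribution of $\{\xi_i^2\}$; Theorem~\ref{lem_solutionsystem} supplies all the solvability and regularity needed for the substitution argument and the square-root expansion. The main obstacle I anticipate is the careful execution of Step~1: rigorously certifying that $m_{2n}(x+\ri 0)\neq -\sigma_i^{-1}$ and $m_{1n}(x+\ri 0)\neq -\xi_j^{-2}$ for all $i,j$ throughout $(0,\infty)\setminus\operatorname{supp}\rho_{1(2)}$ is nontrivial in the present anisotropic setting, because a single accidental crossing would manufacture a spurious singularity of $m_n$ or $m_{1n}$ disjoint from $\operatorname{supp}\rho_{1(2)}$. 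Handling this step requires combining the monotonicity of Stieltjes transforms outside their supports with the uniqueness of solutions to (\ref{eq_systemequationsm1m2elliptical}) on $\Omega$ afforded by Theorem~\ref{lem_solutionsystem}.
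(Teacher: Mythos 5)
The paper itself does not give a proof of Lemma~\ref{lem_edge}; it simply cites Lemma~2.5 of \cite{yang2019edge}, so your attempt at an actual argument goes beyond what the text supplies. Your outline of the support-equality part is heading in a reasonable direction (the coincidence of singular sets modulo accidental crossings, and the monotonicity of the Stieltjes transforms outside the support), and you correctly identify that ruling out the crossings $1+\sigma_i m_{2n}=0$, $1+\xi_j^2 m_{1n}=0$ is the real work---although that work is only sketched, not carried out.

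The genuine gap is in your derivation of the second equation $\partial_x F_p(m_{1n}(\lambda_+),\lambda_+)=0$. You invoke a ``square-root decay of $\rho$ at $\lambda_+$'' to conclude $m_{1n}'(z)\to\infty$, but this is circular. Lemma~\ref{lem_edge} is stated under the assumptions of Theorem~\ref{lem_solutionsystem} only (Assumption~\ref{assum_techincial}); the square-root behavior \eqref{eq: asymlaws_main_edge}--\eqref{eq: asymlaws_main_squareroot} is established only later, in Lemma~\ref{thm_main_asymptotic laws}, \emph{under the additional} Assumption~\ref{assum_techniqueSigma}, and its proof in turn relies on the edge characterization \eqref{eq: def of lambda+} being already available. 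The standard non-circular route (Silverstein--Choi, as adapted in \cite{yang2019edge}) avoids this by never assuming a rate of decay: one considers the real-analytic inverse map $m\mapsto y(m)$ defined implicitly by $F_p(m,y(m))=0$ on the interval where $m_{1n}$ is real and strictly increasing, observes that $\lambda_+$ is the infimum of the range of $y$ on this interval (equivalently, that $y$ attains a local minimum at $m_{1n}(\lambda_+)$ because otherwise $m_{1n}$ would admit a real continuation past $\lambda_+$, contradicting $\lambda_+\in\operatorname{supp}\rho_1$), and concludes $y'(m_{1n}(\lambda_+))=0$, which is exactly $\partial_x F_p=0$ via implicit differentiation. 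You should replace the square-root expansion step with this monotonicity argument; once that is done, the rest of your outline is consistent with the cited reference.
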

\begin{proof}
See Lemma 2.5 of \cite{yang2019edge}. 
\end{proof}

To study the edge behaviors, we need the following assumption which guarantees a regular square-root behavior of the spectral densities near the edges.

\begin{assumption}\label{assum_techniqueSigma}
Suppose that for some constant $\tau>0,$
    \begin{gather}\label{eq: reg of sigma}
    |1+\sigma_1m_{2n,c}(L_{+})|\ge\tau.
\end{gather}
Moreover, for $\xi^2$ in Assumption \ref{assum_D}, we assume that either (1). $-1<d \leq 1$ or (2). $d>1$ and $\phi^{-1}<\vartheta$, where $\vartheta \equiv \vartheta(\xi^2)$ is defined as 
\begin{equation*}
\vartheta=\frac{\phi^{-1}}{p} \sum_{i=1}^p \frac{\sigma_i^2 \upsilon_1}{(L_+-\sigma_i \upsilon_2)^2}, \ \text{where} \ \upsilon_1:=\phi^{-1} \int_0^l \frac{l^2 s^2}{(l-s)^2} \mathrm{d} F(s) \ \text{and} \ \upsilon_2:=\phi^{-1} \int_0^l \frac{ls}{l-s} \mathrm{d} F(s).  
\end{equation*} 
\end{assumption}

\begin{remark}
As will be seen in Theorem \ref{thm_main_asymptotic laws} below, the conditions in Assumption \ref{assum_techniqueSigma} ensure that   $\rho_c, \rho_{1,c}, \rho_{2,c}$ have a square root decay behavior near $L_+.$ Furthermore, it also implies that  $\rho, \rho_1, \rho_2$ have a square root decay behavior near $\lambda_+$ on $\Omega.$ We remark that the square root behavior of the LSD is generally believed to be necessary condition for the appearance of the Tracy-Widom law in the asymptotic limit. For example, if the LSD has a linear behavior near the edge, then the corresponding asymptotics will be very different \cite{Ding2023,kwak2021extremal,lee2015edge,lee2016extremal}. 
\end{remark}

\subsection{Main results}\label{sec_mainresultsection}
The main theorem of the paper is stated as follows. Let $\gamma_0 \equiv \gamma_0(n)$ be defined as follows 
\begin{equation}\label{eq_gamma0definition}
\gamma_0^3=\frac{-2\partial_y F_{p,c}(m_{1n,c}(L_{+}),L_{+})}{\partial^2_x F_{p,c}(m_{1n,c}(L_{+}),L_{+})}\left(\phi^{-1} \int_0^l\frac{s}{L_{+}(1+s m_{1n,c}(L_{+}))^2} \mathrm{d}F(s)\right)^2.
\end{equation}
Let $\lambda_1 \geq \lambda_2 \geq \lambda_{\min\{p,n\}}>0$ be the non-zero eigenvalues of $Q.$
\begin{theorem}\label{thm_main_edgeuniversality}
Suppose Assumptions \ref{assum_techincial} and \ref{assum_techniqueSigma} hold. Then we have that for all $x\in\mathbb{R}$ 
        \begin{gather}\label{eq_resulteq}
        \lim_{n\rightarrow\infty}\mathbb{P}\big(\gamma_0 n^{2/3}(\lambda_1-\lambda_{+})\le x\big)=F_1(x),
    \end{gather}
where $\lambda_{+}$ is defined in Lemma \ref{lem_edge}  via \eqref{eq: def of lambda+}, $\gamma_0$ is defined in (\ref{eq_gamma0definition}) and $F_1$ is the type-1 Tracy-Widom cumulative distribution function.
\end{theorem}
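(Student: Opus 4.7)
The plan is to follow the standard three-step strategy for Tracy--Widom universality: (i) a local law for the resolvent $G(z)=(Q-zI)^{-1}$ down to scale $\eta\gtrsim n^{-2/3+\epsilon}$; (ii) eigenvalue rigidity at the edge; and (iii) a Green function comparison with a reference separable model whose Tracy--Widom limit is already known.

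The starting point is the rotationally invariant representation $\mathbf{u}_i \stackrel{d}{=} \mathbf{z}_i/\|\mathbf{z}_i\|$ with $\mathbf{z}_i\sim \mathrm{N}(0,I_p)$, so that $Y=TZ\wt D$ with $\wt D=\operatorname{diag}(\xi_i/\|\mathbf{z}_i\|)$ and $Z=(\mathbf{z}_1,\dots,\mathbf{z}_n)$. Since $\|\mathbf{z}_i\|^2/p=1+\bigo(p^{-1/2+\epsilon})$ with high probability, the matrix $Q$ is a small perturbation of the quasi-separable matrix $TZ(D^2/p)Z^*T^*$. For step (i), self-consistent equations for the normalised trace $m_Q(z)$ and for the anisotropic entries of $G(z)$ are derived by Schur-complement identities combined with large-deviation bounds for quadratic forms in spherical vectors, and then matched to the deterministic system \eqref{eq_systemequationsm1m2elliptical}. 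Combined with the stability of that system at the edge (a consequence of Assumption \ref{assum_techniqueSigma} and the square-root behaviour noted in the Remark after it), a bootstrap argument upgrades a weak a priori bound to the optimal local law on a suitable spectral domain. Step (ii) then follows by standard Helffer--Sj\"ostrand functional calculus and yields $|\lambda_1-\lambda_+|=\bigo(n^{-2/3+\epsilon})$ with high probability.

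For step (iii), the reference separable covariance model is $Q^{\mathrm{ref}}=T X\wt\Sigma X^*T^*$, where $X$ has i.i.d.\ real Gaussian entries and $\wt\Sigma=D^2/p$ conditionally on $D$. Because the empirical distribution of $D^2/p$ concentrates at the deterministic measure $\phi^{-1}\mathrm{d}F$ on the typical event $\Omega$, the edge universality theorem of \cite{yang2019edge} applies to $Q^{\mathrm{ref}}$ under Assumption \ref{assum_techniqueSigma} and gives the Tracy--Widom law with the scaling dictated by \eqref{eq_gamma0definition}. A Green function comparison between $Q$ and $Q^{\mathrm{ref}}$ --- implemented by interpolating the random normalisation $1/\|\mathbf{z}_i\|^2$ with its deterministic surrogate $1/p$ and controlling the error on the regularised resolvent at scale $\eta\sim n^{-2/3}$ --- then transports the Tracy--Widom limit from $Q^{\mathrm{ref}}$ to $Q$. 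The explicit centering $\lambda_+$ and the scale $\gamma_0 n^{2/3}$ arise from Taylor-expanding $F_{p,c}$ at $(m_{1n,c}(L_+),L_+)$ and using \eqref{eq: def of lambda+}, producing exactly the expression \eqref{eq_gamma0definition}.

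The main obstacle is the local law in step (i), which faces two linked difficulties. First, the spherical law of $\mathbf{u}_i$ destroys the row-wise independence underlying the standard Schur-complement / cumulant approach, so the usual concentration estimates for quadratic forms $\mathbf{x}^{*}A\mathbf{x}-\operatorname{tr}A/n$ must be replaced by sharp Hanson--Wright-type bounds on the unit sphere, coupled with the Gaussian representation to transfer control back and forth between $\mathbf{u}_i$ and $\mathbf{z}_i$. Second, because the system \eqref{eq_systemequationsm1m2elliptical} is genuinely coupled (unlike the infinitely-divisible regime of \cite{Wen2021}) and its Jacobian degenerates at $\lambda_+$ by \eqref{eq: def of lambda+}, the edge stability analysis must be carried out jointly in $(m_{1n},m_{2n})$, using the non-degeneracy built into Assumption \ref{assum_techniqueSigma} to keep the error bounds uniform as $z$ approaches the edge.
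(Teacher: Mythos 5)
Your proposal follows the same high-level strategy as the paper: (i) a local law for the resolvent on the scale $\eta\gtrsim n^{-2/3+\epsilon}$, (ii) eigenvalue rigidity, and (iii) a Green function comparison with the Gaussian reference model $Q^G=TZD^2Z^*T^*$ (equivalently $TX(D^2/p)X^*T^*$ with $X$ standard Gaussian), conditionally on the typical event $\Omega$ for $D$, invoking the separable-covariance results of \cite{yang2019edge} and Corollary 1 of \cite{ding2022tracy} for the Tracy--Widom law of the reference. Up to this point the architecture of the argument matches the paper's exactly, and your description of the stability of the coupled system near $\lambda_+$ under Assumption \ref{assum_techniqueSigma} is correct.

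The genuine divergence is in the implementation of the comparison in step (iii). You propose a coupled interpolation of the random normalisation $1/\|\mathbf{z}_i\|^2$ towards $1/p$, keeping the Gaussian vectors $\mathbf{z}_i$ fixed. The paper instead carries out a Lindeberg column-by-column replacement: starting from $W_0=U$ and ending at $W_n=Z$, with $\mathbf{z}_\gamma$ drawn independently of $\mathbf{u}_\gamma$, and then controls the telescoping differences via a fourth-order Taylor expansion of $F(n\eta_0\operatorname{Im}m_{p,\gamma})$ about $m_{*,\gamma}$, following \cite{bao2022extreme}. This choice is not cosmetic. The reason a naive second-order expansion (and the naive ``small perturbation by $p^{-1/2+\epsilon}$'' intuition in your sketch) is insufficient is that the second moments of the spherical and Gaussian columns coincide exactly, so the leading first- and second-order terms in the expansion cancel identically, and the comparison hinges on the discrepancy at fourth order, where $\mathbb{E}u_{i1}^4=3/(p(p+2))$ differs from $\mathbb{E}z_{i1}^4=3/p^2$ by $\bigo(p^{-3})$ (cf.\ Lemma \ref{lem: basictool_largedevia} and \cite[Theorem V.1]{Wen2021}). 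Tracking this requires the explicit bookkeeping of the quantities $\mathcal{M}_2,\mathcal{M}_4^{1,2},\mathcal{M}_6$ and the $\Delta_i$'s in the proof of Proposition \ref{prop_greenfuncomp}, together with the a priori entrywise bounds of Lemma \ref{lem_somekeyestimates}. Your sketch acknowledges that the error must be controlled on the regularised resolvent at scale $\eta\sim n^{-2/3}$ but does not identify that the comparison only succeeds because of these moment cancellations and the resulting need for a fourth-order expansion; this is the crux of the argument and should be spelled out. The coupled interpolation you propose might be made to work, but it is less standard, it mixes the fluctuations of $\|\mathbf{z}_i\|^2$ with those of the direction $\mathbf{u}_i$ at intermediate interpolation times, and it forgoes the clean independence structure that makes the Lindeberg swap and the accompanying moment comparison tractable.
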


\begin{remark}\label{rmk_orderorder}
Several remarks are in order. First, in (\ref{eq_resulteq}), as discussed in Remark 2.9 of \cite{yang2019edge}, $\gamma_0$ is deterministic and $\gamma_0 \asymp 1$. In addition, $\lambda_+$ is random in general. Second, similar to Theorem III.2 of \cite{ding2022tracy}, we can generalize (\ref{eq_resulteq}) to multiple edge eigenvalues. Recall the Gaussian orthogonal ensemble (GOE) refers to symmetric random matrices of the form $(X+X^*)/\sqrt{2},$ where $X$ is a $n \times n$ matrix with i.i.d. Gaussian entries with mean zero and variance $n^{-1}.$ Let $\{\mu_i^{\text{GOE}}\}$ be the eigenvalues of GOE in the decreasing order, then for any fixed $k \in \mathbb{N},$ we can prove that  for all $(x_1, x_2,\cdots, x_k) \in \mathbb{R}^k$
 \begin{gather*}
        \lim_{n \rightarrow\infty}\mathbb{P}\left[ \left(\gamma_0 n^{2/3}(\lambda_i-\lambda_{+})\le x_i \right)_{1 \leq i \leq k}\right]=\lim_{n \rightarrow\infty}\mathbb{P}\left[ \left(n^{2/3}(\mu^{\text{GOE}}_i-2)\le x_i \right)_{1 \leq i \leq k}\right].
    \end{gather*}
Third, we compare our results with two related works. On the one hand, in \cite{Ding2023}, the authors also consider the edge eigenvalues of (\ref{eq_model_ell}) with a focus that the LSD can be either unbounded or have a linear decay behavior near the edge. Therefore, the asymptotics are not Tracy-Widom in general.  On the other hand, in \cite{Wen2021}, the authors assumed that $\xi^2$ had an infinite divisible representation so that the system of equations in Definition \ref{defn_couplesystem} reduces to only one. As a result, $\xi_i$'s in (\ref{eq_datamodelmodel}) are almost deterministic so that the proof follows straightforwardly from those of the sample covariance matrix with i.i.d. entries or Wigner matrices, for example, see \cite{BaoPanandZhou2015,DingandYang2018,LeenandYin2012,PillaiandYin2014}. 
\end{remark}

The proof of Theorem \ref{thm_main_edgeuniversality} relies on the Green function comparison approach as in \cite{BaoPanandZhou2015,DingandYang2018,LSSY,PillaiandYin2014}, except that we will conduct an expansion up to the order of four as in \cite{bao2022extreme}. In Corollary 1 of \cite{ding2022tracy}, it was shown that when {$U$} in (\ref{eq_model_ell}) is replaced by a Gaussian matrix, its largest eigenvalue will follow TW law asymptotically. More specifically, conditional on some realization of $D$ which happens with high probability (c.f. Lemma \ref{defn_Omega}), consider $Q^G=TZD^2 Z^* T^*,$ where $Z=(\mathbf{z}_i)$ and $\mathbf{z}_i\overset{\mathrm{i.i.d.}}{\sim} \mathcal{N}(0,p^{-1}I_{p\times p}),$ it has been proved that $ \gamma_0 n^{2/3} (\lambda_1(Q^G)-\lambda_+)$ follows TW law asymptotically. Based on this, it suffices to conduct the comparison argument between $Q$ and $Q^G$ which relies on two important ingredients. First, in Lemma \ref{thm_main_asymptotic laws}, we conduct the local analysis for the system of equations in Definition \ref{defn_couplesystem} from which          we observe the square root behavior near the edge. Second, for comparison, in Theorem \ref{thm_main_locallaws}, we prove the local laws for the following matrices 
{
\begin{gather}\label{eq_samplecov_t}
S:= VV^* \equiv TWD^2W^{*}T^{*},\quad \mathcal{S}:= V^*V  \equiv DW^{*}\Sigma WD, \ \  V:=TWD, 
\end{gather}
where $W=(\mathbf{w}_i)$ with $\mathbf{w}_i=\mathbf{u}_i\;\text{or}\;\mathbf{z}_i.$ We note that if {$W=U$} or $W=Z$, then $S\equiv Q$ or $S\equiv Q^G$, respectively.}

\section{Preliminaries}\label{sec_preliminary}
We provide some preliminary results. First, since $D^2$ is random, it is more convenient to fixed some realization with certain properties which happens with high probability. The following lemma provides such probability events. 
\begin{lemma}\label{defn_Omega}
Under Assumptions \ref{assum_D}, denote $\Omega \equiv \Omega_n$ be the event on $\{\xi_i^2\}$ so that the following conditions hold:
    \begin{equation}\label{eq_property}
\begin{aligned}
& n^{-1/(d+1)}\log^{-1}n<l-\xi^2_{(1)}<n^{-1/(d+1)}\log n, \ \ \xi^2_{(1)}-\xi^2_{(2)}> n^{-1/(d+1)}\log^{-1}n, \\
&  \left|\frac{1}{n}\sum_{i=1}^n\frac{\xi^2_i}{1+\xi^2_{i}m_{1 n}(z)}-\int\frac{t}{1+tm_{1 n}(z)}\mathrm{d} F(t)\right|\le\frac{Cn^{\epsilon}}{\sqrt{n}},
\end{aligned}
\end{equation}
where $C>0$ is some generic constant and $\epsilon>0$ is some arbitrarily small constant. Then  we have that $\mathbb{P}(\Omega)=1-\mathrm{o}(1).$
\end{lemma}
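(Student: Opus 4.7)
The plan is to handle the three assertions of \eqref{eq_property} separately and then union-bound; I will abbreviate $G(x):=\mathbb{P}(l-\xi^2\le x)$, which by Assumption \ref{assum_D} satisfies $G(x)\asymp x^{d+1}$.

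For the bracketing of $l-\xi^2_{(1)}$, I would observe that $l-\xi^2_{(1)}=\min_i(l-\xi_i^2)$ is a minimum of $n$ i.i.d.\ nonnegative variables, so $\mathbb{P}(l-\xi^2_{(1)}>t)=(1-G(t))^n$. Choosing $t_+=n^{-1/(d+1)}\log n$ gives $G(t_+)\asymp n^{-1}(\log n)^{d+1}$ and hence $\mathbb{P}(l-\xi^2_{(1)}>t_+)\le \exp(-c(\log n)^{d+1})\to 0$; choosing $t_-=n^{-1/(d+1)}\log^{-1}n$ gives $G(t_-)\asymp n^{-1}(\log n)^{-(d+1)}$ and hence $\mathbb{P}(l-\xi^2_{(1)}\le t_-)\le 1-\exp(-2c(\log n)^{-(d+1)})\to 0$. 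Combining the two delivers the first estimate.

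For the spacing $\xi^2_{(1)}-\xi^2_{(2)}$, I would work from the joint density of the top two order statistics, namely $n(n-1)f(s_2)f(s_1)F(s_2)^{n-2}\mathbf{1}\{0<s_2<s_1<l\}$, or, equivalently, condition on $\xi^2_{(1)}=s$ and treat $\xi^2_{(2)}$ as the maximum of $n-1$ i.i.d.\ copies of $\xi^2$ conditioned to lie in $(0,s)$. On the already-$1-\ro(1)$ event that $s\in[l-n^{-1/(d+1)}\log n,\,l)$, a calculation parallel to the first step, with $G$ replaced by the conditional distribution, shows $\mathbb{P}(s-\xi^2_{(2)}\le n^{-1/(d+1)}\log^{-1}n\mid \xi^2_{(1)}=s)=\ro(1)$, which integrates to the second estimate.

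For the concentration of the linear statistic, I would exploit that $m_{1n}(z)\in\mathbb{C}_{+}$, so the map $t\mapsto t/(1+tm_{1n}(z))$ is uniformly bounded and Lipschitz on $[0,l]$. The subtlety is that $m_{1n}(z)$ is itself a functional of $\{\xi_i^2\}$, so a naive Hoeffding bound does not apply directly; I would instead freeze $m_{1n}(z)$ at its deterministic counterpart $m_{1n,c}(z)$ from \eqref{eq_systemequationsm1m2ellipticallimit} and apply Hoeffding to the i.i.d.\ bounded summands to get
\[
\Bigl|\tfrac{1}{n}\sum_{i=1}^n\tfrac{\xi_i^2}{1+\xi_i^2 m_{1n,c}(z)}-\int\tfrac{t}{1+tm_{1n,c}(z)}\,\mathrm{d}F(t)\Bigr|\le n^{-1/2+\eps/2}
\]
with probability $1-\ro(1)$. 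A stability argument for the implicit equation \eqref{eq_functionFequal}, using non-degeneracy of $\partial F_p/\partial m$ at the solution together with the Hoeffding estimate itself, then upgrades this to $|m_{1n}(z)-m_{1n,c}(z)|\lesssim n^{-1/2+\eps}$ on the same event; substituting back and using the Lipschitz control in $m$ of the summand costs only a further $O(n^{-1/2+\eps})$, yielding the third inequality with $m_{1n}$ in place of $m_{1n,c}$. A standard $z$-net combined with Lipschitz control in $z$ (for $\operatorname{Im}z$ bounded below) would propagate the bound if one needs it simultaneously over $z$.

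The genuine obstacle is this last step: the self-referential appearance of $m_{1n}(z)$ in the kernel of the empirical average forbids a direct concentration inequality, and one must run a bootstrap — concentrate first against the deterministic proxy $m_{1n,c}(z)$, then close the loop via the implicit-function stability of the two-equation system in Definition \ref{defn_couplesystem}. The first two items are straightforward order-statistics computations and I expect them to absorb only standard regular-variation estimates; it is the coupling of concentration with the self-consistent equation that carries the content of the lemma.
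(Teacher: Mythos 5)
The paper itself does not prove this lemma; it simply cites Lemma~A.11 of \cite{Ding2023}, so a line-by-line comparison is not possible. That said, your outline has the right architecture (extreme-value analysis for the first two items, concentration plus self-consistency for the third), but two points deserve scrutiny.

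On the spacing estimate, your claim that $\mathbb{P}\bigl(s-\xi^2_{(2)}\le n^{-1/(d+1)}\log^{-1}n\mid \xi^2_{(1)}=s\bigr)=\ro(1)$ \emph{uniformly} on the event $s\in[\,l-n^{-1/(d+1)}\log n,\,l)$ fails when $d>1$. Writing $X_i:=l-\xi_i^2$ with $G(x)=\mathbb{P}(X\le x)\asymp x^{d+1}$, the conditional probability is $\approx (n-1)\bigl(G(x+\delta)-G(x)\bigr)$ with $x=l-s$ and $\delta=n^{-1/(d+1)}\log^{-1}n$; for $x\asymp n^{-1/(d+1)}\log n$ this is of order $(\log n)^{d-1}$, which diverges for $d>1$, a case explicitly permitted by Assumptions~\ref{assum_D} and~\ref{assum_techniqueSigma}(2). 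The statement is still true, but the proof must integrate over the law of $\xi^2_{(1)}$ rather than condition on the crude bracketing event: the contribution from $l-s$ near the log-inflated endpoint is exponentially suppressed by the density of $X_{(1)}$, and the typical scale $l-s\asymp n^{-1/(d+1)}$ is what makes the whole integral $\ro(1)$. This is a genuine, if repairable, gap.

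On the concentration estimate, you have identified the real difficulty — the kernel depends on $m_{1n}(z)$, which is itself a functional of $\{\xi_i^2\}$ — and your bootstrap is a viable route, but it has a buried circularity: closing the loop via stability of $F_p(\cdot,z)=0$ requires nondegeneracy of $\partial_m F_p$ at the solution, and $F_p$ itself contains the very empirical average you are trying to control. A cleaner way to avoid this is to prove the concentration \emph{uniformly over $m$} in an a priori deterministic compact neighborhood (via a finite $m$-net together with Lipschitz control of $m\mapsto t/(1+tm)$, or equivalently by a Dvoretzky–Kiefer–Wolfowitz bound on $\sup_t|F_n(t)-F(t)|$ combined with integration by parts in $t$), and only afterwards substitute $m=m_{1n}(z)$. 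That removes the bootstrap entirely. Either way, the constant in the resulting bound unavoidably depends on $\operatorname{Im}m_{1n}(z)$ through the Lipschitz/integrability constant of the kernel, so one should be explicit about the range of $z$ for which the third item of \eqref{eq_property} is asserted.
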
 
\begin{proof}
See Lemma A.11 of \cite{Ding2023}. 
\end{proof}

Second, we summarize the properties of the Stieltjes transforms. For some fixed (small) constants $\mathsf{c}, \mathsf{C}, \epsilon_e>0,$ define the spectral parameter sets 
\begin{equation}\label{eq: spectraldomainD}
\mathbf{D} \equiv \mathbf{D}(\mathtt c,\mathtt C,\varepsilon_e):=\left\{z=E+\ri\eta \in \mathbb{C}_+: \lambda_{+}-\mathtt c\le E\le \lambda_{+}+\mathtt C,\; p^{-1+\varepsilon_e}\le\eta\le \mathtt C\right\},
\end{equation}
and 
\begin{equation}\label{eq: spectraldomainDprime}
\mathbf{D}_0 \equiv \mathbf{D}(\mathtt c,\mathtt C,\infty):=\left\{z=E+\ri\eta \in \mathbb{C}_+: \lambda_{+}-\mathtt c\le E\le \lambda_{+}+\mathtt C, \;0<\eta\le \mathtt C\right\}.
\end{equation}
\begin{lemma}\label{thm_main_asymptotic laws}
    Suppose Assumptions \ref{assum_techincial} and \ref{assum_techniqueSigma} hold. When $n$ is sufficiently large, for any realization $\{\xi^2_i\} \in \Omega$ defined in Lemma \ref{defn_Omega}, we have that  
    \begin{itemize}
      \item [(1)] For all $z\in\mathbf{D}$ with $z\rightarrow\lambda_{+}$,
    \begin{gather}\label{eq: asymlaws_main_edge}
        m_{n}(\lambda_{+})-m_{n}(z) \asymp \sqrt{|\lambda_{+}-z|}.
    \end{gather}
    Consequently, for small constant $\kappa>0$
     \begin{gather}\label{eq: asymlaws_main_squareroot}
        \rho(\lambda_{+}-\kappa) \asymp \sqrt{\kappa}. 
    \end{gather}
    The results also apply to $m_{1(2)n}$ and $\rho_{1(2)}.$
    \item [(2)]  For all $z\in\mathbf{D}_0$, 
    \begin{gather}\label{eq: asymlaws_main_estm12n}
        |m_{n}(z)|\sim1,\quad \operatorname{Im}m_{n}(z)\sim
        \begin{cases}
        \eta/\sqrt{\kappa+\eta},& \quad \operatorname{if} \quad E\ge\lambda_{+}\\
        \sqrt{\kappa+\eta}, & \quad \operatorname{if} \quad E\le\lambda_{+}.
        \end{cases}
    \end{gather}
      The results also apply to $m_{1(2)n}.$ 
    \item [(3)]  There exists an constant $\tau^{\prime}>0$, such that  for any $z\in\mathbf{D}_0$
    \begin{gather}\label{eq: asymlaws_main_lowerbound}
        \min_{1 \leq i \leq p}|1+\sigma_im_{2n}(z)|\ge\tau^{\prime},\quad  \min_{1 \leq j \leq n}|1+\xi^2_{j}m_{1n}(z)|\ge\tau^{\prime}.
    \end{gather}
    \end{itemize}
    Finally, all the results in (\ref{eq: asymlaws_main_edge})--(\ref{eq: asymlaws_main_lowerbound}) also hold for $m_{n,c}, m_{1(2)n,c}, \rho_c, \rho_{1(2)c}$ unconditionally without fixing the realization. 
\end{lemma}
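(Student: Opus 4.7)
Since Lemma \ref{defn_Omega} shows that on $\Omega$ the empirical quantities in \eqref{eq_systemequationsm1m2elliptical} are $n^{-1/2+\epsilon}$-close to the deterministic ones in \eqref{eq_systemequationsm1m2ellipticallimit}, and since $\lambda_{+}$ is within $n^{-1/2+\epsilon}$ of $L_{+}$ by standard stability of the critical system \eqref{eq: def of lambda+}, I would carry out the entire argument for the deterministic triplet $(m_{1n,c},m_{2n,c},m_{n,c})$ near $L_+$ and then transfer the conclusions to the random triplet by a perturbation argument on $\Omega$, exactly as in the proofs of the corresponding local-law pre-processing steps in \cite{ding2022tracy,yang2019edge}. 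So the bulk of the proof is the analysis of the single scalar equation $F_{p,c}(x,y)=0$ defined in \eqref{eq:generalfunctionlimit}.

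\textbf{Step 1 (Part (3): lower bounds).} The hypothesis $|1+\sigma_1m_{2n,c}(L_+)|\geq\tau$ in Assumption \ref{assum_techniqueSigma} together with $\sigma_i\leq\sigma_1$ and a sign/monotonicity argument on the defining equation for $m_{2n,c}$ propagates to $|1+\sigma_i m_{2n,c}(L_+)|\geq\tau$ for all $i$. For $|1+\xi_j^2 m_{1n,c}(z)|$, the worst case is when $\xi_j^2$ is near $l$; here the dichotomy in Assumption \ref{assum_techniqueSigma}(2) is essential. Case $-1<d\leq 1$: the integrability condition $d>-1$ combined with $d\leq 1$ prevents $\int \frac{s}{1+sm_{1n,c}(z)}\mathrm{d}F(s)$ from blowing up when $m_{1n,c}$ approaches $-1/l$, and one checks that $-m_{1n,c}(L_+)<1/l$ with a quantitative gap. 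Case $d>1$: the explicit bound $\phi^{-1}<\vartheta$ is precisely the algebraic condition that forces $L_+$ to sit strictly below the value at which $1+l\cdot m_{1n,c}$ would vanish. Extending from $L_+$ to $\mathbf{D}_0$ uses continuity of $m_{1n,c}$ and $m_{2n,c}$ in $z$. For the empirical version, one further uses the gap $l-\xi^2_{(1)}\geq n^{-1/(d+1)}\log^{-1}n$ from Lemma \ref{defn_Omega}.

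\textbf{Step 2 (Part (1): square-root behavior).} By Lemma \ref{lem_edge}, $x_+:=m_{1n,c}(L_+)$ solves $F_{p,c}(x_+,L_+)=0=\partial_x F_{p,c}(x_+,L_+)$. I would Taylor expand
\[
F_{p,c}(x,y)=\partial_y F_{p,c}(x_+,L_+)(y-L_+)+\tfrac{1}{2}\partial^2_x F_{p,c}(x_+,L_+)(x-x_+)^2+\text{higher order}.
\]
The crucial non-degeneracy input is that $\partial_y F_{p,c}$ and $\partial^2_x F_{p,c}$ at $(x_+,L_+)$ are both bounded and bounded away from zero; the lower bounds of Step 1 together with Assumption \ref{assum_techniqueSigma}(2) give this, since both derivatives can be written as positive integrals/sums of the form $\sum_i \sigma_i^2/(\cdots)^2\cdot(\cdots)$ whose denominators are now controlled. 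Solving the implicit equation $F_{p,c}(m_{1n,c}(z),z)=0$ gives $m_{1n,c}(z)-m_{1n,c}(L_+)\asymp\sqrt{|z-L_+|}$, and then the defining equations for $m_{2n,c}$ and $m_{n,c}$ transmit the same square-root behavior to those Stieltjes transforms. The density estimate \eqref{eq: asymlaws_main_squareroot} then follows from the inversion formula applied to the expansion.

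\textbf{Step 3 (Part (2): size estimates on $\mathbf{D}_0$).} Writing $z=E+\mathrm{i}\eta$ with $E$ near $L_+$, I split into $\kappa=E-L_+\geq 0$ and $\kappa<0$ and substitute the expansion $m_{n,c}(z)-m_{n,c}(L_+)\asymp\sqrt{|z-L_+|}$ into the explicit square root $\sqrt{\kappa+\mathrm{i}\eta}$. Separating real and imaginary parts gives the two-sided bounds on $\operatorname{Im} m_{n,c}$ in \eqref{eq: asymlaws_main_estm12n}, and $|m_{n,c}|\sim 1$ follows because $m_{n,c}(L_+)$ is itself of order one (a consequence of Step 1's lower bounds).

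\textbf{Main obstacle.} The real work is Step 1, verifying the lower bound $|1+\xi^2_j m_{1n}(z)|\geq\tau'$ uniformly in $j$: when $d$ is large, a few samples $\xi_j^2$ will be very close to $l$, and one must show that the empirical denominator $1+\xi^2_j m_{1n}(z)$ does not touch zero. This is where both the quantitative gap in Lemma \ref{defn_Omega} and the explicit $\vartheta$-condition in Assumption \ref{assum_techniqueSigma} are used in an essential way. Once this bound is in hand, Steps 2 and 3 are routine Taylor-expansion and sign-chasing arguments.
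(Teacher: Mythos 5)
The paper's own proof is the single line ``See Lemma A.4 of \cite{Ding2023},'' so there is no in-house argument to compare against; your blind reconstruction is in fact more detailed than anything appearing in this paper. Your three-step plan --- establish the Part~(3) lower bounds to control all denominators, Taylor-expand the scalar equation $F_{p,c}(x,y)=0$ at the critical pair $(m_{1n,c}(L_+),L_+)$ to produce the square root of Part~(1), read off Part~(2) by separating $\kappa$ and $\eta$ in $\sqrt{\kappa+\mathrm{i}\eta}$, and finally transfer the conclusions from the deterministic triplet to the random triplet on $\Omega$ --- is exactly the standard edge-analysis route in \cite{yang2019edge,ding2021spiked,Ding2023} and is almost certainly what the cited lemma does; your identification of the chief technical obstacle (keeping $|1+\xi_j^2 m_{1n}(z)|$ away from zero when $\xi_j^2\approx l$, using the order-statistics gap of Lemma \ref{defn_Omega} and the $\vartheta$-dichotomy of Assumption \ref{assum_techniqueSigma}) is also on point.

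One slip worth correcting in Step~2: you assert that both $\partial_y F_{p,c}$ and $\partial^2_x F_{p,c}$ are ``positive integrals/sums,'' and use this to conclude they are bounded away from zero. For $\partial_y F_{p,c}$ that is true. But writing $g(x)=\phi^{-1}\int\frac{s}{1+sx}\,\mathrm{d}F(s)$ and $h_i=-y+\sigma_i g(x)=-L_+(1+\sigma_i m_{2n,c}(L_+))<0$ at the critical pair, one finds
\begin{equation*}
\partial^2_x F_{p,c}(x_+,L_+)
=-g''(x_+)\cdot\frac{1}{p}\sum_{i}\frac{\sigma_i^2}{h_i^2}
+2\bigl(g'(x_+)\bigr)^2\cdot\frac{1}{p}\sum_{i}\frac{\sigma_i^3}{h_i^3},
\end{equation*}
in which \emph{both} contributions are negative (since $g''(x_+)>0$, $h_i^2>0$, $h_i^3<0$), not positive. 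The non-vanishing of $\partial^2_x F_{p,c}$ is therefore automatic once the quantity is finite; the genuine input from Assumption \ref{assum_techniqueSigma} is the \emph{boundedness} of $g''(x_+)$ (equivalently, that $m_{1n,c}(L_+)$ is separated from $-1/l$ by a constant), which is precisely what your Step~1 is for. So your conclusion stands, but the sentence justifying it contains a sign error that would need to be fixed in a complete write-up.
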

\begin{proof}
See Lemma A.4 of \cite{Ding2023}. 
\end{proof}

Third, we provide the local laws. The following notion will be used in the statements. 

\begin{definition}[Stochastic domination] Let
\[A=\left(A^{(n)}(u):n\in\mathbb{N}, u\in U^{(n)}\right),\hskip 10pt B=\left(B^{(n)}(u):n\in\mathbb{N}, u\in U^{(n)}\right),\]
be two families of nonnegative random variables, where $U^{(n)}$ is a possibly $n$-dependent parameter set. We say $A$ is stochastically dominated by $B$, uniformly in $u$, if for any fixed (small) $\epsilon>0$ and (large) $\xi>0$, 
\[\sup_{u\in U^{(n)}}\mathbb{P}\left(A^{(n)}(u)>n^\epsilon B^{(n)}(u)\right)\le n^{-\xi},\]
for large enough $n \ge n_0(\epsilon, \xi)$, and we shall use the notation $A\prec B$ or $A=\mathrm{O}_\prec(B)$. Throughout this paper, the stochastic domination will always be uniform in all parameters that are not explicitly fixed, such as the matrix indices and the spectral parameter $z$.  
\end{definition} 


Recall (\ref{eq_samplecov_t}). For $z=E+\ri \eta \in \mathbb{C}_+,$ denote the resolvents 
\begin{equation}\label{eq_resolvents}
G(z)=(S-zI)^{-1} \in \mathbb{R}^{p \times p},\quad \mathcal{G}(z)=(\mathcal{S}-zI)^{-1} \in \mathbb{R}^{n \times n}.
\end{equation}
Moreover, we define $m(z):=p^{-1}{\rm tr}(G(z))$ and $m_1(z):=p^{-1}\operatorname{tr}(G(z)\Sigma).$

\begin{theorem}\label{thm_main_locallaws}
    Suppose Assumptions \ref{assum_techincial} and \ref{assum_techniqueSigma} hold.  When $n$ is sufficiently large, for any realization $\{\xi^2_i\} \in \Omega$ defined in Lemma \ref{defn_Omega}, we have that for $z \in \mathbf{D}$ in (\ref{eq: spectraldomainD}) uniformly,  
  \begin{gather}\label{eq_entrywise}
        \max_{1 \leq i,j \leq n}|\mathcal{G}+z^{-1}(1+m_{1n}(z)D^2)^{-1}|_{ij}\prec \sqrt{\frac{\operatorname{Im} m_{1n}(z)}{ p \eta}}+\frac{1}{p \eta}.
    \end{gather}    
Moreover,     
       \begin{gather}\label{eq_averagedone}
        {|m_{1n}(z)-m_1(z)|+|m_n(z)-m(z)|\prec (p\eta)^{-1}.}
    \end{gather}
Finally,  for any $z\in\mathbf{D}\cap\{z=E+\mathrm{i}\eta:E\ge\lambda_{+}, \ p\eta\sqrt{\kappa+\eta}\ge p^{\varepsilon_e}\}$ uniformly, 
    \begin{gather}\label{eq_averagedtwo}
        {|m_{1n}(z)-m_1(z)|+|m_n(z)-m(z)|\prec \frac{1}{p(\kappa+\eta)}+\frac{1}{(p\eta)^2\sqrt{\kappa+\eta}}. }
    \end{gather}

\end{theorem}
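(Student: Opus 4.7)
The plan is to follow the resolvent-expansion scheme that is standard for local laws of separable sample covariance matrices (e.g., \cite{DingandYang2018, PillaiandYin2014, yang2019edge}), adapted to the two-block self-consistent system in Definition \ref{defn_couplesystem}. Throughout, I would condition on a realization $\{\xi_i^2\}\in\Omega$ so that $D^2$ is a fixed diagonal matrix with the regularity provided by Lemma \ref{defn_Omega}, and handle the two choices $W=U$ and $W=Z$ simultaneously by writing $\mathbf{u}_i=\mathbf{z}_i/\norm{\mathbf{z}_i}$ with $\mathbf{z}_i\sim\mathcal{N}(0,p^{-1}I_p)$. Since $\norm{\mathbf{z}_i}^2$ concentrates around $1$ at scale $p^{-1/2}$, the two cases differ only by a harmless rescaling of $\xi_i^2$ that preserves the conclusion of Lemma \ref{defn_Omega}.

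For the entrywise bound \eqref{eq_entrywise}, I would apply the Schur complement formula to $\mathcal{G}_{ii}(z)$, writing
\be{
\mathcal{G}_{ii}^{-1}(z)=-z-z\,\xi_i^2\,\mathbf{w}_i^{*}\Sigma^{1/2}G^{(i)}(z)\Sigma^{1/2}\mathbf{w}_i+\text{lower-order corrections},
}
where $G^{(i)}$ is the resolvent of the minor obtained by removing the $i$-th column of $WD$. The standard concentration bound $\mathbf{w}_i^{*}A\mathbf{w}_i=p^{-1}\operatorname{tr}A+\mathrm{O}_{\prec}(\sqrt{p^{-2}\operatorname{tr}(AA^{*})})$ for vectors uniform on the sphere (equivalent, up to the normalization above, to Hanson--Wright for Gaussians), applied with $A=\Sigma G^{(i)}$ and followed by a resolvent identity to replace $G^{(i)}$ by $G$, gives an approximate equation $\mathcal{G}_{ii}\approx -z^{-1}(1+\xi_i^2 m_1(z))^{-1}$ with $m_1(z)=p^{-1}\operatorname{tr}(\Sigma G(z))$, and a parallel Schur expansion for $G_{kk}$ produces an approximate equation involving $m_2(z)=n^{-1}\sum_i\xi_i^2\mathcal{G}_{ii}(z)$. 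These are precisely the right-hand sides of \eqref{eq_systemequationsm1m2elliptical}, so inverting the Jacobian of that system on $\mathbf{D}$, whose nondegeneracy is delivered by the square-root behavior and the lower bounds \eqref{eq: asymlaws_main_estm12n}--\eqref{eq: asymlaws_main_lowerbound} of Lemma \ref{thm_main_asymptotic laws}, converts the quadratic-form fluctuation size into \eqref{eq_entrywise}.

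For the averaged law \eqref{eq_averagedone}, I would substitute the entrywise bound back into the self-consistent system and average over the free index. The extra factor of $(p\eta)^{-1/2}$ compared with the square root of the quadratic-form error comes from a fluctuation averaging lemma of the type in \cite{PillaiandYin2014,DingandYang2018}, exploiting that the leading errors are mean-zero isotropic quadratic forms whose average over $i$ gains an additional $p^{-1/2}$. The improved edge bound \eqref{eq_averagedtwo} requires a second iteration: using \eqref{eq_averagedone} as input, one reruns the stability analysis with the sharper control $\operatorname{Im} m_n\asymp\sqrt{\kappa+\eta}$ from \eqref{eq: asymlaws_main_estm12n}, and applies a second round of fluctuation averaging to extract the two-term decomposition $\bigl(p(\kappa+\eta)\bigr)^{-1}+\bigl((p\eta)^2\sqrt{\kappa+\eta}\bigr)^{-1}$.

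The hard part will be the fluctuation averaging near the spectral edge. As $z\to\lambda_{+}$, the linearization of the system \eqref{eq_systemequationsm1m2elliptical} develops a one-dimensional unstable direction, identified by the second relation in \eqref{eq: def of lambda+}; a naive inversion loses exactly the square-root factor that is needed for \eqref{eq_averagedtwo}. To recover it, I would decompose the error vector into the unstable direction and its stable complement, apply the uniform invertibility of the Jacobian on the latter, and exploit the orthogonality of the mean-zero quadratic-form fluctuations to the unstable direction to gain additional averaging cancellation there. A secondary difficulty is that $D^2$ is itself random and enters nonlinearly in both the self-consistent equations and the fluctuation-averaging step; the uniform bounds over realizations $\{\xi_i^2\}\in\Omega$ guaranteed by Lemma \ref{defn_Omega} together with the uniform statement of Lemma \ref{thm_main_asymptotic laws} on $\Omega$ are precisely what make this deterministic-conditional analysis go through.
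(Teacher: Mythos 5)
Your overall scheme -- Schur complement for the resolvent entries, quadratic-form concentration, substitution into the self-consistent system, fluctuation averaging, and a two-pass bootstrap for the sharper edge estimate \eqref{eq_averagedtwo} -- is the same architecture the paper uses (it is the \cite{yang2019edge} machinery adapted to the coupled $(m_{1n},m_{2n})$ system, conditional on $\Omega$). But there are two genuine gaps.

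First, you never address how to get an a priori bound on the entries of $\mathcal{G}-\Pi_2$ that is valid for all $z\in\mathbf{D}$, not just for $\eta\gtrsim 1$. The self-consistent argument you describe only becomes nondegenerate once you already know the off-diagonal entries and the diagonal deviation are small (the paper packages this into the event $\Xi=\{\Lambda\le(\log p)^{-1}\}$); without that, the ``lower-order corrections'' in your Schur expansion and the $G^{(i)}\to G$ replacement are uncontrolled. The paper bridges this by first establishing the local law at $\eta\ge 1$, then proving a weak entrywise bound $\Lambda\prec(p\eta)^{-1/4}$ on all of $\mathbf{D}$ via a continuity/bootstrap argument in $\eta$ (Lemma \ref{lem: weak entrywise local law}, following Lemma 5.12 of \cite{yang2019edge}), and only then iterating with fluctuation averaging. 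Your proposal jumps directly to the iteration and stability inversion, which is circular without this step.

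Second, your treatment of the instability near the edge is more complicated than what the paper does, and the specific mechanism you invoke does not quite match the problem. The paper eliminates $m_{2n}$ and reduces the coupled system to a single scalar equation $F_p(m_{1n},z)=0$ in \eqref{eq_functionFequal}--\eqref{eq:F(m,z)}; the stability Lemma \ref{lem: stability of F} is then a one-variable Taylor-expansion estimate, $|\mu_0-m_{1n}|\le C\delta/\sqrt{\kappa+\eta+\delta}$, exploiting $\partial_x F_p\to 0$ but $\partial_x^2 F_p\not\to 0$ at the edge. There is no projection onto stable/unstable subspaces of a Jacobian and no ``orthogonality of the mean-zero fluctuations to the unstable direction'' step -- the gain that produces \eqref{eq_averagedtwo} comes from the smaller input error $\delta\sim\Psi_\Theta^2$ supplied by the fluctuation averaging in Lemma \ref{lem: fluctuation averaging} plus the sharp bound $\operatorname{Im}m_{1n}\lesssim\sqrt{\kappa+\eta}$, fed into the same scalar stability lemma. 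Your 2D-projection description would need extra work to be made precise and would not simplify matters here.

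A smaller caveat: the reduction $\mathbf{u}_i=\mathbf{z}_i/\norm{\mathbf{z}_i}$ is not a clean ``rescaling of $\xi_i^2$'' -- after that substitution, $\xi_i^2/\norm{\mathbf{z}_i}^2$ is no longer independent of the column $\mathbf{z}_i$ and does not a priori satisfy the conditions defining $\Omega$, so Lemma \ref{defn_Omega} cannot be invoked verbatim. The paper instead handles $W=U$ and $W=Z$ uniformly by proving the large-deviation inputs (Lemma \ref{lem: basictool_largedevia}) for both the spherical and Gaussian columns directly, which sidesteps this issue.
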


\section{Main technical proofs}\label{sec_Proofofmain}
As discussed in the end of Section \ref{sec_mainresultsection}, our results rely on a comparison argument. The core is to prove the following argument. {Let $\widetilde{G}(z):=(Q^G-zI)^{-1}$ be resolvent of $Q^G$ and $\widetilde{m}(z):=p^{-1}\operatorname{tr}\widetilde{G}(z)$. }
\begin{proposition}\label{prop_greenfuncomp}
Suppose the assumptions of Lemma \ref{thm_main_asymptotic laws} hold and $F:\mathbb{R}\rightarrow\mathbb{R}$ is a function whose derivatives satisfy
    \[
		\sup_{x\in\mathbb{R}}|F^{(l)}(x)|(1+|x|)^{-C_{1}}\le C_{1},\qquad l=1,2,3,4, 
		\]
for	 some constant $C_{1}>0$. Then for any sufficiently small constant $\epsilon>0$ and  for any real
		numbers $E,E_{1}$ and $E_{2}$ satisfying
		\begin{equation}\label{eq_edgeepsilon}
		|E-\lambda_{+}| \le n^{-2/3+\epsilon}, \ |E_{1}-\lambda_{+}| \le n^{-2/3+\epsilon}, \ |E_{2}-\lambda_{+}|\le n^{-2/3+\epsilon},
		\end{equation}
		and $\eta_0=n^{-2/3-\epsilon}$,  there exist some constants $c_1,C>0$ 
  \begin{gather}\label{eq_greenfuncomp_ineq1}
      |\mathbb{E}F(n\eta_0\operatorname{Im} m(z))-\mathbb{E}F(n\eta_0\operatorname{Im}{\widetilde{m}(z))}|\le n^{-c_1+C {\epsilon}},\qquad z=E+\mathrm{i}\eta_0,
  \end{gather}
  and
  \begin{gather}\label{eq_greenfuncomp_ineq2}
      \Big|\mathbb{E}F\Big(\int_{E_{1}}^{E_{2}}n\operatorname{Im} m(y+\ri \eta_0){\rm d}y\Big)-\mathbb{E}F\Big(\int_{E_{1}}^{E_{2}}n\operatorname{Im} {\widetilde{m}(y+\ri \eta_0)}{\rm d}y\Big)\Big|\le n^{-c_1+C {\epsilon}}. 
  \end{gather}
\end{proposition}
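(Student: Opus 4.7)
The plan is to prove Proposition \ref{prop_greenfuncomp} by a Green function comparison of Lindeberg type, interpolating between $Q$ and $Q^G$ one column at a time and performing a Taylor expansion of the test function up to the fourth order, in the spirit of \cite{BaoPanandZhou2015,DingandYang2018,LSSY,PillaiandYin2014,bao2022extreme}. Throughout the argument we condition on a realization of $D$ in the event $\Omega$ of Lemma \ref{defn_Omega}, on which the local laws of Theorem \ref{thm_main_locallaws} and the edge regularity of Lemma \ref{thm_main_asymptotic laws} are in force.

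Define the interpolating sequence $W^{(\gamma)} := (\mathbf{z}_1,\ldots,\mathbf{z}_\gamma,\mathbf{u}_{\gamma+1},\ldots,\mathbf{u}_n)$ with $S^{(\gamma)} := T W^{(\gamma)} D^2 (W^{(\gamma)})^* T^*$, $G^{(\gamma)} := (S^{(\gamma)}-zI)^{-1}$, and $m^{(\gamma)} := p^{-1}\operatorname{tr} G^{(\gamma)}$, so that $S^{(0)} = Q$ and $S^{(n)} = Q^G$. By a telescoping sum, it suffices to show
\begin{equation*}
\bigl|\mathbb{E}F(n\eta_0 \operatorname{Im} m^{(\gamma)}(z)) - \mathbb{E}F(n\eta_0 \operatorname{Im} m^{(\gamma-1)}(z))\bigr| \leq n^{-1-c_1+C\epsilon}
\end{equation*}
for every $\gamma = 1,\ldots, n$. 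Let $S^{(\gamma)}_0$ be $S^{(\gamma)}$ with the $\gamma$-th rank-one summand $\xi_\gamma^2 T\mathbf{z}_\gamma \mathbf{z}_\gamma^* T^*$ removed, and $G^{(\gamma)}_0$, $m^{(\gamma)}_0$ its resolvent and trace; these are independent of both $\mathbf{u}_\gamma$ and $\mathbf{z}_\gamma$. The Sherman--Morrison identity yields
\begin{equation*}
m^{(\gamma)} - m^{(\gamma)}_0 = -\frac{\xi_\gamma^2\, \mathbf{z}_\gamma^* T^* (G^{(\gamma)}_0)^2 T \mathbf{z}_\gamma/p}{1 + \xi_\gamma^2\, \mathbf{z}_\gamma^* T^* G^{(\gamma)}_0 T \mathbf{z}_\gamma},
\end{equation*}
and an analogous identity for $m^{(\gamma-1)}$ with $\mathbf{u}_\gamma$ in place of $\mathbf{z}_\gamma$. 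Taylor expanding $F$ around the common point $F(n\eta_0 \operatorname{Im} m^{(\gamma)}_0)$ to order four reduces the per-swap error to a comparison of the first four moments of the rational expression above between the sphere and Gaussian laws.

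For the moment matching, $\mathbb{E}[\mathbf{u}_\gamma \mathbf{u}_\gamma^*] = p^{-1} I_p = \mathbb{E}[\mathbf{z}_\gamma \mathbf{z}_\gamma^*]$ together with the vanishing of all odd moments on both sides eliminates the first- and third-order Taylor contributions exactly; the fourth joint moments of $\mathbf{u}_\gamma$ differ from those of $\mathbf{z}_\gamma$ by an explicit $O(p^{-1})$ correction coming from the constraint $\|\mathbf{u}_\gamma\|^2 = 1$. The quadratic forms in the Sherman--Morrison formula are controlled via the entrywise local law \eqref{eq_entrywise} and the averaged bounds \eqref{eq_averagedone}--\eqref{eq_averagedtwo}; Lemma \ref{thm_main_asymptotic laws} provides $|m_{1n}(z)| \sim 1$ and $\operatorname{Im} m_{1n}(z) \lesssim n^{-1/3+C\epsilon}$ at the edge scale $z = E + \ri\eta_0$, which propagates through the local laws to quantitative control of $\mathbf{w}^* T^* (G^{(\gamma)}_0)^k T \mathbf{w}$ for $k=1,2$. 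Summing the per-swap bound over $\gamma$ produces \eqref{eq_greenfuncomp_ineq1}; the second bound \eqref{eq_greenfuncomp_ineq2} follows by applying the same analysis uniformly in $y \in [E_1, E_2]$ and integrating over the interval of length $O(n^{-2/3+\epsilon})$.

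The main obstacle is the sharpness of the fourth-order expansion: the matched first and second moments eliminate the naive $O(n^{-1})$ per-swap cost, but the residual contribution stemming from the $O(p^{-1})$ fourth-moment discrepancy between $\mathbf{u}_\gamma$ and $\mathbf{z}_\gamma$ already sits close to the tolerance $n^{-1-c_1}$. The required additional smallness must be extracted from the structure of $G^{(\gamma)}_0$ itself --- in particular, from the sharper averaged local law \eqref{eq_averagedtwo} supplying an extra $\sqrt{\kappa+\eta}$ factor in the regime $E \geq \lambda_+$ --- together with the square-root decay of $\rho$ near $\lambda_+$ recorded in \eqref{eq: asymlaws_main_squareroot}, mirroring the fourth-order bookkeeping of \cite{bao2022extreme}.
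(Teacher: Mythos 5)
Your high-level strategy --- column-wise Lindeberg replacement between $U$ and $Z$, resolvent identity after removing the $\gamma$-th column, geometric expansion of the Sherman--Morrison quotient, and a fourth-order Taylor expansion of $F$ --- matches the paper's. However, there is a genuine conceptual error in the moment-matching step that would stop the argument.

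You claim that the vanishing of odd moments ``eliminates the first- and third-order Taylor contributions exactly.'' This cannot be right: the increment $m^{(\gamma)}-m_0^{(\gamma)}=\sum_{k\ge 0}\mathcal X\mathcal Y^k$ (with $\mathcal X=-\mathbf y_\gamma^*(G_\gamma^{(\gamma)})^2\mathbf y_\gamma$, $\mathcal Y=-\mathbf y_\gamma^*G_\gamma^{(\gamma)}\mathbf y_\gamma$) is an \emph{even} function of $\mathbf w_\gamma$ --- it is built entirely from quadratic forms --- so every Taylor order produces only even-degree polynomials in $\mathbf w_\gamma$ and odd-moment symmetry buys nothing. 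Only the degree-$2$ piece (the term $\alpha=1,\beta=0$ in the paper's bookkeeping) matches exactly, because $\mathbb E[\mathbf u\mathbf u^*]=\mathbb E[\mathbf z\mathbf z^*]=p^{-1}I$. The degree-$4$ and degree-$6$ pieces sit inside the first- and third-order Taylor terms as well as the second- and fourth-order ones, and they do \emph{not} cancel. Estimating precisely these terms --- the $(\alpha,\beta)\in\{(1,1),(1,2),(2,0),(2,1),(3,0)\}$ cases --- is the bulk of the proof, via the entrywise bounds of Lemma \ref{lem_somekeyestimates} (e.g.\ $|[(G_\gamma^{(\gamma)})^2]_{ij}|\prec n^{1/3+2\epsilon}$ off the diagonal) together with the sharp sphere-versus-Gaussian moment comparisons, which give differences of order $n^{-3}$ to $n^{-4}$ for distinct-index sixth moments (cf.\ $\Delta_y=\rO(n^{-4})$), not merely relative $O(p^{-1})$.

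You also attribute the ``required additional smallness'' to the sharper averaged law \eqref{eq_averagedtwo} and the square-root density decay. That is not where it comes from in the paper: the gain is extracted from (i) the tiny absolute size of the fourth- and sixth-moment discrepancies between $\mathbf u_\gamma$ and $\mathbf z_\gamma$ noted above, and (ii) the off-diagonal entrywise decay in Lemma \ref{lem_somekeyestimates}, which makes sums like $\sum_{i\neq j,k}[(G_1^{(1)})^2]_{ij}[(G_1^{(1)})^2]_{jk}[(G_1^{(1)})^2]_{ki}$ small even though they have $\sim n^3$ terms. Without recognizing that the odd Taylor orders survive at degree $\geq 4$ in $\mathbf w_\gamma$, and without these entrywise off-diagonal bounds, your per-swap estimate would stall at $O(n^{-1})$ and the telescoping sum would fail.
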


Armed with Proposition \ref{prop_greenfuncomp}, we are able to prove the main result, Theorem \ref{thm_main_edgeuniversality}.
\begin{proof}[\bf Proof of Theorem \ref{thm_main_edgeuniversality}]
Using Proposition \ref{prop_greenfuncomp}, following lines of the arguments of Corollary 4.2 of \cite{PillaiandYin2014} or Lemmas 5.1 and 5.2 of \cite{DingandYang2018}, we can show that for any fixed realization $\{\xi_i^2\} \in \Omega$ in Lemma \ref{defn_Omega} and any $x \in \mathbb{R},$ 
\begin{gather}\label{eq_kkkooo}
    \lim_{n\rightarrow\infty}\mathbb{P}(n^{2/3}(\lambda_1(Q)-\lambda_{+})\le x)=\lim_{n\rightarrow\infty}\mathbb{P}(n^{2/3}(\lambda_1(Q^G)-\lambda_{+})\le x). 
\end{gather}
It has been proved in Corollary 1 of \cite{ding2022tracy} and Section 2.3 \cite{yang2019edge} that  
    \begin{gather} \label{eq_kkkooo22}
        \lim_{n\rightarrow\infty}\mathbb{P}(\gamma_0n^{2/3}(\lambda_1(Q^G)-\lambda_{+})\le x)=F_1(x).
    \end{gather}
Moreover, as discussed in Remark \ref{rmk_orderorder}, we have that $\gamma_0 \asymp 1.$ For the unconditional setting, we see from Lemma \ref{defn_Omega} that $\mathbb{P}(\Omega)=1-\ro(1)$. Therefore, Theorem \ref{thm_main_edgeuniversality} can be proved using(\ref{eq_kkkooo}) and (\ref{eq_kkkooo22}).
\end{proof} 

The rest of the section leaves to the proof of Proposition \ref{prop_greenfuncomp}.

\subsection{Proof of Proposition \ref{prop_greenfuncomp}}
We first prepare some notations.  For the data matrix $V$ in (\ref{eq_samplecov_t}) and the index set ${\cal I}=\{1,\dots,n\}$, given $\mathcal{T}\subset{\cal I}$, we introduce
	the notation $V^{(\mathcal{T})}$ to denote the $p \times(n-|\mathcal{T}|)$ minor of $V$ obtained from removing all the $i$th columns of $V$ for $i\in \mathcal{T}$ and keep the original indices of $V$.  For convenience, we briefly write $\{i\}$, $\{i,j\}$ and $\{i,j\}\cup \mathcal{T}$ as $(i)$, $(ij)$
	and $(ij\mathcal{T})$ respectively. Correspondingly, we denote 
the matrices 	$S^{(\mathcal{T})}=(V^{(\mathcal{T})}) (V^{(\mathcal{T})})^*,\ {\mathcal{S}}^{(\mathcal{T})}=(V^{(\mathcal{T})})^* (V^{(\mathcal{T})})$ 
	and their associated resolvents as $G^{(\mathcal{T})}(z)=(S^{(\mathcal{T})}-zI)^{-1}$ and ${\cal G}^{(\mathcal{T})}(z)=({\mathcal{S}}^{(\mathcal{T})}-zI)^{-1},$ respectively. 

\begin{proof}[\bf Proof of Proposition \ref{prop_greenfuncomp}]
Due to similarity, we only prove (\ref{eq_greenfuncomp_ineq1}). We consider the following Linderberg replacement for {$U$}. For $\gamma=0,\dots,n$, let {$W_{\gamma}$} be the matrix whose {first $\gamma$ columns} are the same as those of {$Z$} and the remaining $n-\gamma$ columns are the same as those of {$U$}. Then, it is easy to see {$W_0=U$ and $W_n=Z$}. Denote $G_{\gamma}$, $\mathcal{G}_{\gamma}$ as the Green functions of {$TW_{\gamma}D^2W_{\gamma}^{*}T^{*}$ and $DW_{\gamma}^{*}T^{*}TW_{\gamma}D$} respectively, and $m_{p,\gamma}=p^{-1}\operatorname{tr}G_{\gamma}$, $m_{n,\gamma}=p^{-1}\operatorname{tr}\mathcal{G}_{\gamma}$. We now rewrite (\ref{eq_greenfuncomp_ineq1}) as the following telescoping summation
    \begin{gather*}
   \mathbb{E}\big(F(n\eta_0\operatorname{Im}m(z))\big)-\mathbb{E}\big(F(n\eta_0\operatorname{Im}\widetilde{m}(z))\big)=\sum_{\gamma=1}^n\Big(\mathbb{E}\big(F(n\eta_0\operatorname{Im}m_{p,\gamma-1}(z))\big)-\mathbb{E}\big(F(n\eta_0\operatorname{Im}m_{p,\gamma}(z))\big)\Big).
    \end{gather*}
    It suffices to prove that for all $1 \leq \gamma \leq n$
    \begin{gather}\label{eq_greenfuncomp_diffdecomp}
        |\mathbb{E}\big(F(n\eta_0\operatorname{Im}m_{p,\gamma-1}(z))\big)-\mathbb{E}\big(F(n\eta_0\operatorname{Im}m_{p,\gamma}(z))\big)|\le n^{-1-c+C\epsilon}.
    \end{gather}
Following \cite{BaoPanandZhou2015,DingandYang2018,PillaiandYin2014}, by introducing $m_{*,\gamma}=p^{-1}\operatorname{tr}G_{\gamma}^{(\gamma)}$, it suffices to prove  
    \begin{gather}\label{eq_greenfuncomp_diffdecomp2}
    \begin{split}
    \Big|\mathbb{E}\big[F(n\eta_0\operatorname{Im}m_{p,\gamma-1}(z))\big]-\mathbb{E}\big[F(n\eta_0\operatorname{Im}m_{*,\gamma}(z))\big]-\Big(\mathbb{E}\big[F(n\eta_0\operatorname{Im}m_{p,\gamma}(z))\big]-\mathbb{E}\big[F(n\eta_0\operatorname{Im}m_{*,\gamma}(z))\big]\Big)\Big|\le n^{-1-c_1+C\epsilon}.
    \end{split}
    \end{gather}
Throughout the proof, for notational convenience, in (\ref{eq_samplecov_t}), we set $Y=TUD$ if $W=U$ and $Y^G=TZD$ if $W=Z.$ Moreover,  we denote the $\gamma$-th column of $Y$ and $Y^G$ as $\mathbf{y}_{\gamma}$ and $\widetilde{\mathbf{y}}_{\gamma}$ respectively. Our arguments rely on the following estimates whose proof will be given in Section \ref{sec_append_prior}.

\begin{lemma}\label{lem_somekeyestimates} For $\epsilon$ in (\ref{eq_edgeepsilon}), we have that 
   \begin{gather}
        |\mathbf{y}^{*}_{\gamma}(G_{\gamma}^{(\gamma)})^2\mathbf{y}_{\gamma}|\prec n^{1/3+\epsilon},\label{eq_greenfuncomp_priorest1}\\
       {{|[G_{\gamma}^{(\gamma)}]_{ii}+z^{-1}(1+\sigma_im_{2n}(z))^{-1}|}\prec n^{-1/3+\epsilon},}\quad |[G_{\gamma}^{(\gamma)}]_{ij}|\prec n^{-1/3+\epsilon},\; 1 \leq i\neq j \leq p,\label{eq_greenfuncomp_priorest2}\\
        |[(G_{\gamma}^{(\gamma)})^2]_{ij}|\prec n^{1/3+2\epsilon}, \ 1 \leq i\neq j \leq p. \label{eq_greenfuncomp_priorest3}
    \end{gather}
\end{lemma}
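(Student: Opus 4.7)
All three estimates follow from Theorem \ref{thm_main_locallaws} combined with Ward-type identities and concentration for spherical quadratic forms. The spectral parameter $z=E+\ri\eta_0$ satisfies $\kappa+\eta_0\lesssim n^{-2/3+\epsilon}$, so by Lemma \ref{thm_main_asymptotic laws}(2) we have $\operatorname{Im} m_{2n}(z)\prec n^{-1/3+\epsilon}$ and the local-law error in \eqref{eq_entrywise} is itself of order $n^{-1/3+\epsilon}$. The plan is to first upgrade the entry-wise local law from $\mathcal G$ to $G$ via the standard duality between the two resolvents in the sample-covariance setting, and then transfer it to the minor $G_\gamma^{(\gamma)}$ via the Sherman-Morrison identity associated with the rank-one relation $S=S^{(\gamma)}+\mathbf{y}_\gamma\mathbf{y}_\gamma^*$. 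This yields $[G_\gamma^{(\gamma)}]_{ii}=-z^{-1}(1+\sigma_i m_{2n}(z))^{-1}+\mathrm{O}_\prec(n^{-1/3+\epsilon})$ and $[G_\gamma^{(\gamma)}]_{ij}\prec n^{-1/3+\epsilon}$ for $i\neq j$, proving \eqref{eq_greenfuncomp_priorest2}. As a by-product, using \eqref{eq: asymlaws_main_lowerbound}, one has $\operatorname{Im}[G_\gamma^{(\gamma)}]_{ii}\asymp\operatorname{Im} m_{2n}(z)\prec n^{-1/3+\epsilon}$ on $\Omega$.

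For \eqref{eq_greenfuncomp_priorest3} I would apply Cauchy--Schwarz to the spectral expansion of $G_\gamma^{(\gamma)}$ together with the Ward identity:
\[|[(G_\gamma^{(\gamma)})^2]_{ij}|^2\leq[(G_\gamma^{(\gamma)})^*G_\gamma^{(\gamma)}]_{ii}[(G_\gamma^{(\gamma)})^*G_\gamma^{(\gamma)}]_{jj}=\eta_0^{-2}\operatorname{Im}[G_\gamma^{(\gamma)}]_{ii}\operatorname{Im}[G_\gamma^{(\gamma)}]_{jj}\prec n^{2/3+2\epsilon}.\]
Taking square roots yields \eqref{eq_greenfuncomp_priorest3}.

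The quadratic-form bound \eqref{eq_greenfuncomp_priorest1} is the most delicate and crucially uses that the minor $G_\gamma^{(\gamma)}$ is independent of $\mathbf{y}_\gamma=\xi_\gamma T\mathbf{u}_\gamma$, since the minor excludes the $\gamma$-th column of $V$. Conditioning on $\{\xi_i^2\}$ and on the $\sigma$-algebra generated by $G_\gamma^{(\gamma)}$, and applying a Hanson--Wright-type concentration for uniform spherical vectors (as used in \cite{el2009concentration,Ding2023}), gives
\[\mathbf{y}_\gamma^*(G_\gamma^{(\gamma)})^2\mathbf{y}_\gamma=\frac{\xi_\gamma^2}{p}\operatorname{tr}\!\bigl(\Sigma(G_\gamma^{(\gamma)})^2\bigr)+\mathrm{O}_\prec\!\Bigl(\frac{\|\Sigma^{1/2}(G_\gamma^{(\gamma)})^2\Sigma^{1/2}\|_{\mathrm{HS}}}{p}\Bigr).\]
The trace satisfies $|\operatorname{tr}(\Sigma(G_\gamma^{(\gamma)})^2)|\leq\sigma_1\sum_i|[(G_\gamma^{(\gamma)})^2]_{ii}|\leq\sigma_1\eta_0^{-1}\sum_i\operatorname{Im}[G_\gamma^{(\gamma)}]_{ii}\prec p\,n^{1/3+\epsilon}$ by the preceding step, and the Hilbert--Schmidt term admits the analogous Ward-based bound $\|(G_\gamma^{(\gamma)})^2\|_{\mathrm{HS}}^2\leq\eta_0^{-3}\sum_i\operatorname{Im}[G_\gamma^{(\gamma)}]_{ii}$ of comparable or smaller order. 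Dividing by $p$ and using $\xi_\gamma^2\lesssim l$ produces \eqref{eq_greenfuncomp_priorest1}.

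The main technical obstacle is the transfer of the entry-wise local law to the minor at the critical edge scale $p\eta_0\sqrt{\kappa+\eta_0}\asymp 1$, where the usual bulk stability of the self-consistent equation degenerates and one must rely on the refined edge stability of Lemma \ref{thm_main_asymptotic laws}, as well as on conditioning on the event $\Omega$ of Lemma \ref{defn_Omega} so that the randomness of $D$ does not destabilize $m_{1n}$ and $m_{2n}$. Since the Sherman--Morrison correction used in \eqref{eq_greenfuncomp_priorest2} itself involves a quadratic form of the type \eqref{eq_greenfuncomp_priorest1}, the cleanest implementation is to close the three estimates simultaneously by a short bootstrap starting from rough a priori bounds on $\Omega$ rather than establishing them sequentially.
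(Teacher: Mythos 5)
Your three estimates are all correct, but your route differs from the paper's in two of the three cases, and your concern about circularity is unfounded. For \eqref{eq_greenfuncomp_priorest1} and \eqref{eq_greenfuncomp_priorest3} the paper avoids dealing with $(G_\gamma^{(\gamma)})^2$ directly by writing it as a Cauchy contour integral of $G_\gamma^{(\gamma)}$ over a disk of radius $\eta_0/2$ and then invoking the first-power local law on the contour; you instead work with $(G_\gamma^{(\gamma)})^2$ head-on using the Ward identity $G G^* = \operatorname{Im} G /\eta$. For \eqref{eq_greenfuncomp_priorest3} your Cauchy--Schwarz bound $|[(G_\gamma^{(\gamma)})^2]_{ij}|^2\le [G_\gamma^{(\gamma)}(G_\gamma^{(\gamma)})^*]_{ii}\,[(G_\gamma^{(\gamma)})^*G_\gamma^{(\gamma)}]_{jj}$ followed by Ward gives the stated rate (up to an extra factor of $n^{\epsilon}$, harmless) and is arguably more elementary than the contour argument, which additionally requires verifying that the disk $\Gamma_{\eta_0}$ stays inside the domain $\mathbf D$. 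For \eqref{eq_greenfuncomp_priorest1} your direct spherical concentration plus the Ward-based bounds $|\operatorname{tr}(\Sigma (G_\gamma^{(\gamma)})^2)|\le \sigma_1\eta_0^{-1}\operatorname{Im}\operatorname{tr} G_\gamma^{(\gamma)}$ and $\|(G_\gamma^{(\gamma)})^2\|_F^2\le\eta_0^{-3}\operatorname{Im}\operatorname{tr} G_\gamma^{(\gamma)}$ produces the same $n^{1/3+C\epsilon}$ rate; note that you only need the \emph{averaged} quantity $\operatorname{Im}\operatorname{tr} G_\gamma^{(\gamma)}\asymp p\operatorname{Im} m^{(\gamma)}$, which follows from \eqref{eq_averagedone} without any appeal to \eqref{eq_greenfuncomp_priorest2}, so your argument can be made to depend only on Theorem~\ref{thm_main_locallaws} and Lemma~\ref{thm_main_asymptotic laws}.

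On \eqref{eq_greenfuncomp_priorest2}: the paper does not use Sherman--Morrison to pass between $G$ and $G_\gamma^{(\gamma)}$. The minor $G_\gamma^{(\gamma)}$ is the resolvent of a sample covariance matrix with one observation removed, so the local law \eqref{eq_entrywise} and the resolvent identity (5.5 of \cite{yang2019edge}), which writes $[G_\gamma^{(\gamma)}]_{ii}$ in terms of $\mathbf{r}_i^*(\mathcal{G}_\gamma^{(\gamma)})^{[i]}\mathbf{r}_i$ where $\mathbf{r}_i$ is the $i$-th row, apply to it directly; no rank-one transfer from $G$ is needed. Your worry that the Sherman--Morrison correction ``itself involves a quadratic form of the type \eqref{eq_greenfuncomp_priorest1}'' is not right: both the Sherman--Morrison denominator $1+\mathbf{y}_\gamma^*G^{(\gamma)}\mathbf{y}_\gamma$ and the quadratic form appearing in the resolvent identity involve only the \emph{first} power of the resolvent, not $(G_\gamma^{(\gamma)})^2$, so there is no dependence on \eqref{eq_greenfuncomp_priorest1}. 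Consequently the three estimates close linearly (\eqref{eq_greenfuncomp_priorest2} first, then \eqref{eq_greenfuncomp_priorest3} and \eqref{eq_greenfuncomp_priorest1} as corollaries, or even \eqref{eq_greenfuncomp_priorest1} independently via the averaged local law), and the bootstrap you propose, while not wrong, is unnecessary overhead.
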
    


Set $\mathcal{X}:=-\mathbf{y}_{\gamma}^{*}(G^{(\gamma)}_{\gamma})^2\mathbf{y}_{\gamma}$ and $\mathcal{Y}:=-\mathbf{y}_{\gamma}^{*}G^{(\gamma)}_{\gamma}\mathbf{y}_{\gamma}$ with their counterparts $\widetilde{\mathcal{X}}:=-\widetilde{\mathbf{y}}_{\gamma}^{*}(G^{(\gamma)}_{\gamma})^2\widetilde{\mathbf{y}}_{\gamma}$ and $\widetilde{\mathcal{Y}}:=-\widetilde{\mathbf{y}}_{\gamma}^{*}G_{\gamma}^{(\gamma)}\widetilde{\mathbf{y}}_{\gamma}$.  Using Lemma \ref{lem: basictool_largedevia}, Theorem \ref{thm_main_locallaws}, Lemmas \ref{lem: basictool_greenfunciden} and \ref{lem_somekeyestimates}, we have that 
\begin{equation}\label{eq_fundementalestimate}
|\mathcal{X}(\mathcal{Y}+\xi^2_{\gamma}m_{1n}(z))^s|\prec n^{-(s-1)/3+(s+1)\epsilon}.
\end{equation}
Using (\ref{eq_fundementalestimate}), (\ref{eq: asymlaws_main_estm12n}) and decomposing $\mathcal{Y}=\mathcal{Y}+\xi^2_{\gamma}m_{1n}(z)-\xi^2_{\gamma}m_{1n}(z)$, we conclude from Binomial theorem that for some constants $C_s=\rO(1), 0 \leq s \leq 2$ 
\begin{gather}\label{eq_expanforXY}
\begin{split}
    \operatorname{Im}\big(\sum_{k\ge 0}\mathcal{X}\mathcal{Y}^k\big)&=\operatorname{Im}\big(\sum_{0\le s\le 2}C_s\mathcal{X}(\mathcal{Y}+\xi^2_{\gamma}m_{1n}(z))^s\big)+\operatorname{Im}\big(\sum_{l\ge 0}(-\xi^2_{\gamma}m_{1n}(z))^l\mathcal{X}\big)+\rO_{\prec}(n^{-2/3+4\epsilon})\\
    &=\operatorname{Im}\big(\sum_{0\le s\le 2}C_s\mathcal{X}(\mathcal{Y}+\xi^2_{\gamma}m_{1n}(z))^s\big)+\operatorname{Im}\frac{\mathcal{X}}{1+\xi^2_{\gamma}m_{1n}(z)}+\rO_{\prec}(n^{-2/3+4\epsilon}).
\end{split}
\end{gather}
{where we used the fact $m_{1n}(\lambda_{+})>-l^{-1}$ as in Lemma A.4 of \cite{Ding2023} so that $|\xi^2_{\gamma}m_{1n}(z)|<1$.} Since $G_{\gamma-1}^{(\gamma)}=G_{\gamma}^{(\gamma)},$ we have the following resolvent expansion 
\begin{gather}\label{eq_greenfuncomp_resolventexp}
        G_{\gamma-1}=G_{\gamma}^{(\gamma)}-G_{\gamma-1}\mathbf{y}_{\gamma}\mathbf{y}_{\gamma}^{*}G_{\gamma}^{(\gamma)},
    \end{gather}
    which results in 
    \begin{gather}\label{eq_priorest_greenfuncomp}
    n\eta_0|m_{p,\gamma-1}-m_{*,\gamma}|=n\eta_0|\sum_{k\ge0}\mathcal{X}\mathcal{Y}^k|=\phi^{-1}\eta_0|z[\mathcal{G}_{\gamma-1}]_{\gamma\gamma}\mathbf{y}_{\gamma}^{*}(G_{\gamma}^{(\gamma)})^2\mathbf{y}_{\gamma}|. 
\end{gather}
Together with Lemma \ref{lem_somekeyestimates} and Theorem \ref{thm_main_locallaws}, we see that  $n\eta_0|m_{p,\gamma-1}-m_{*,\gamma}| \prec n^{-1/3+\epsilon}.$ Using the above bound and (\ref{eq_fundementalestimate}), combining  \eqref{eq_expanforXY}, \eqref{eq_priorest_greenfuncomp} and Lemma \ref{lem_somekeyestimates}, we see that for some constant $C>0,$ the first difference in \eqref{eq_greenfuncomp_diffdecomp2} can be expanded as 
\begin{align}
            &F(n\eta_0\operatorname{Im}m_{p,\gamma-1})-F(n\eta_0\operatorname{Im}m_{*,\gamma})
           \nonumber \\ 
            &=F^{(1)}(n\eta_0\operatorname{Im}m_{*,\gamma})\times \phi^{-1}\eta_0\operatorname{Im}\big(\sum_{0\le s\le 2}C_s\mathcal{X}(\mathcal{Y}+\xi^2_{\gamma}m_{1n}(z))^s\big)+\rO_{\prec}(n^{-4/3+3\epsilon})\nonumber \\
            &+F^{(2)}(n\eta_0\operatorname{Im}m_{*,\gamma})\times\frac{\phi^{-2}\eta^2_0}{2}\big[\operatorname{Im}\big(C_0\mathcal{X}\big)^2+2\operatorname{Im}\big(C_0\mathcal{X}\big)\times\operatorname{Im}\big(C_1\mathcal{X}(\mathcal{Y}+\xi^2_{\gamma}m_{1n}(z))\big)\big]+\rO_{\prec}(n^{-4/3+\epsilon}) \nonumber \\
            &+F^{(3)}(n\eta_0\operatorname{Im}m_{*,\gamma})\times \frac{\phi^{-3}\eta^3_0}{6}\big[\operatorname{Im}\big(C_0\mathcal{X}\big)^3+\rO_{\prec}(n^{-4/3+\epsilon})\big]+\rO_{\prec}(n^{-4/3+C\epsilon}). \label{eq_first order decomp}
\end{align}
Moreover, for some constants $C_{l,k}=\rO(1), 0 \leq l \leq 3$ and $0 \leq k \leq 2,$ using \eqref{eq_first order decomp}, we can further obtain 
\begin{gather*}
    \begin{split}
       &\mathbb{E}\big(F(n\eta_0\operatorname{Im}m_{p,\gamma-1})\big)-\mathbb{E}\big(F(n\eta_0\operatorname{Im}m_{*,\gamma})\big)\\
        &=\mathbb{E}\Big(F^{(1)}(n\eta_0\operatorname{Im}m_{*,\gamma})\times\phi^{-1}\eta_0\operatorname{Im}\big(\sum_{0\le k\le 2}C_{1,k}\mathcal{X}\mathcal{Y}^k\big)\Big)+\mathbb{E}\Big(F^{(2)}(n\eta_0\operatorname{Im}m_{*,\gamma})\times\phi^{-2}\eta^2_0\big(\frac{C_{2,0}}{2}(\operatorname{Im}\mathcal{X})^2+C_{2,1}\operatorname{Im}\mathcal{X}\operatorname{Im}\mathcal{X}\mathcal{Y}\big)\Big)\\
        &+\mathbb{E}\Big(F^{(3)}(n\eta_0\operatorname{Im}m_{*,\gamma})\times\phi^{-3}\eta^3_0\big(\frac{C_{3,0}}{6}(\operatorname{Im}\mathcal{X})^3\big))\Big)+\rO(n^{-4/3+C\epsilon}).
    \end{split}
\end{gather*}
We note that the above expression also holds for the second difference in \eqref{eq_greenfuncomp_diffdecomp2} as we replace $\mathcal{X}$, $\mathcal{Y}$ with $\mathcal{\widetilde{X}}$, $\mathcal{\widetilde{Y}}$. In light of (\ref{eq_greenfuncomp_diffdecomp2}), it suffices to consider the following difference for any $1\le\alpha\le3$, $1\le\alpha+\beta\le3$,
\begin{gather*}
    \mathbb{E}\Big(\eta_0^{\alpha}(\mathbf{y}_{\gamma}^{*}(G_{\gamma}^{(\gamma)})^2\mathbf{y}_{\gamma})^{\alpha}(\mathbf{y}_{\gamma}^{*}G_{\gamma}^{(\gamma)}\mathbf{y}_{\gamma})^{\beta}\Big)-\mathbb{E}\Big(\eta_0^{\alpha}(\mathbf{\widetilde{y}}_{\gamma}^{*}(G_{\gamma}^{(\gamma)})^2\mathbf{\widetilde{y}}_{\gamma})^{\alpha}(\mathbf{\widetilde{y}}_{\gamma}^{*}G_{\gamma}^{(\gamma)}\mathbf{\widetilde{y}}_{\gamma})^{\beta}\Big).
\end{gather*}
We only consider the case when $\gamma=1$, and other cases can be handled similarly. 

First, when $\alpha=1$, $\beta=0$, since $\mathbf{y}_1$ and {$\mathbf{\widetilde{y}}_1$} are independent with $G_1^{(1)}$, we conclude from (\ref{lem: basictool_momentsofu}) that $\mathbb{E}\eta_0\mathbf{y}_1^{*}(G_1^{(1)})^2\mathbf{y}_1-\mathbb{E}\eta_0\mathbf{\widetilde{y}}_1^{*}(G_1^{(1)})^2\mathbf{\widetilde{y}}_1=0.$ For $\alpha=3$, $\beta=0$, we have that 
\begin{gather*}
\begin{split}
    \mathbb{E}\Big(\eta_0^3\big(\mathbf{y}_1^{*}(G_1^{(1)})^2\mathbf{y}_1\big)^3\Big)-\mathbb{E}\Big(\eta_0^3\big(\mathbf{\widetilde{y}}_1^{*}(G_1^{(1)})^2\mathbf{\widetilde{y}}_1\big)^3\Big)
    \lesssim\eta_0^3\big(\mathcal{M}_2+\mathcal{M}^{1}_4+\mathcal{M}_4^2+\mathcal{M}_6\big).
\end{split}
\end{gather*}
where
\begin{gather*}
    \mathcal{M}_2:=\mathbb{E}\sum_{i,j,k}\big(u_{i1}^2u_{j1}^2u_{k1}^2[(G_1^{(1)})^2]_{ij}[(G_1^{(1)})^2]_{jk}[(G_1^{(1)})^2]_{ki}\big)-\mathbb{E}\sum_{i,j,k}\big(z_{i1}^2z_{j1}^2z_{k1}^2[(G_1^{(1)})^2]_{ij}[(G_1^{(1)})^2]_{jk}[(G_1^{(1)})^2]_{ki}\big),\\
    \mathcal{M}^1_4:=\mathbb{E}\sum_{i,j}\big(u_{i1}^4u_{j1}^2[(G_1^{(1)})^2]_{ij}^2[(G_1^{(1)})^2]_{ii}\big)-\mathbb{E}\sum_{i,j}\big(z_{i1}^4z_{j1}^2[(G_1^{(1)})^2]_{ij}^2[(G_1^{(1)})^2]_{ii}\big),\\
    \mathcal{M}^2_4:=\mathbb{E}\sum_{i,j}\big(u_{i1}^4u_{j1}^2[(G_1^{(1)})^2]_{ii}^2[(G_1^{(1)})^2]_{jj}\big)-\mathbb{E}\sum_{i,j}\big(z_{i1}^4z_{j1}^2[(G_1^{(1)})^2]_{ii}^2[(G_1^{(1)})^2]_{jj}\big),\\
    \mathcal{M}_6:=\mathbb{E}\sum_iu_{i1}^6[(G_1^{(1)})^2]_{ii}^3-\mathbb{E}\sum_iz_{i1}^6[(G_1^{(1)})^2]_{ii}^3.
\end{gather*}
It follows from \eqref{eq_greenfuncomp_priorest3} and \cite[Theorem V.1]{Wen2021} that $\mathcal{M}_2\lesssim p^{-1}n^{1+3\epsilon},\quad \mathcal{M}_4^{1,2}\lesssim p^{-1}n^{1+3\epsilon},\quad \mathcal{M}_6\lesssim p^{-2}n^{1+3\epsilon}.$ This implies that $  \mathbb{E}\Big(\eta_0^3\big(\mathbf{y}_1^{*}(G_1^{(1)})^2\mathbf{y}_1\big)^3\Big)-\mathbb{E}\Big(\eta_0^3\big(\mathbf{\widetilde{y}}_1^{*}(G_1^{(1)})^2\mathbf{\widetilde{y}}_1\big)^3\Big)\lesssim n^{-2+3\epsilon}.$

Second, for the other cases, we will need a finer investigation. We focus on the case $\alpha=1$, $\beta=2$ and the other cases can be handled similarly. Let $y_{i1}$, {$\widetilde{y}_{i1}$} be the $i$-th element of $\mathbf{y}_1$ and {$\mathbf{\widetilde{y}}_1$} respectively. We use the notation $\mathcal{E}(ab,ij,st):=\mathbb{E}[(G_1^{(1)})^2]_{ab}[G_1^{(1)}]_{ij}[G_1^{(1)}]_{st}$ in the sequel. We have 
\begin{gather}\label{eq_greenfuncomp_decomp3}
\begin{split}
    &\mathbb{E}\big(\eta_0\mathbf{y}_1^{*}(G_1^{(1)})^2\mathbf{y}_1(\mathbf{y}_1^{*}G_1^{(1)}\mathbf{y}_1)^2\big)=\eta_0\sum_{k_1,k_2,k_3}^{*}\mathbb{E}\prod_{i=1}^3y_{k_i1}^2\big(\mathcal{E}(k_1k_1,k_2k_2,k_3k_3)+2\mathcal{E}(k_1k_1,k_2k_3,k_2k_3)\\
    &+4\mathcal{E}(k_1k_2,k_1k_2,k_3k_3)+8\mathcal{E}(k_1k_2,k_1k_3,k_2k_3)\big)+\eta_0\sum_{\mathcal{J}_6}\mathbb{E}\prod_{i=1}^6y_{k_i,1}\mathcal{E}(k_1k_2,k_3k_4,k_5k_6),
\end{split}
\end{gather}
where $\sum^{*}_{k_1,\dots,k_l}$ denotes the sum over $\{(k_1,\dots,k_l)\in\{1,\dots,p\}^l:\text{$k_i$'s are distinct}\}$ and $\mathcal{J}_6$ denotes the set of indices $k_i\in\{1,\dots,p\},i=1,\dots, 6$ such that $k_i$ appears even number of times and there is an index $k_i$ appears at least four times. Note that the odd number of an index $k_i$ will give the degenerated expectation. This gives the estimate $\#\mathcal{J}_6\lesssim n^2$, together with \eqref{eq_greenfuncomp_priorest2} and \eqref{eq_greenfuncomp_priorest3}, we have $|\eta_0\sum_{\mathcal{J}_6}\mathbb{E}\prod_{i=1}^6y_{k_i1}\mathcal{E}(k_1k_2,k_3k_4,k_5k_6)|\lesssim {n^{-4/3+C\epsilon}}.$ For other terms in \eqref{eq_greenfuncomp_decomp3}, we take difference with their counterparts in {$\mathbb{E}(\eta_0\mathbf{\widetilde{y}}_1^{*}(G_1^{(1)})^2\mathbf{\widetilde{y}}_1(\mathbf{\widetilde{y}}_1^{*}G_1^{(1)}\mathbf{\widetilde{y}}_1)^2)$ 
\begin{gather*}
    \mathbb{E}\big(\eta_0\mathbf{y}_1^{*}(G_1^{(1)})^2\mathbf{y}_1(\mathbf{y}_1^{*}G_1^{(1)}\mathbf{y}_1)^2\big)-\mathbb{E}\big(\eta_0\mathbf{\widetilde{y}}_1^{*}(G_1^{(1)})^2\mathbf{\widetilde{y}}_1(\mathbf{\widetilde{y}}_1^{*}G_1^{(1)}\mathbf{\widetilde{y}}_1)^2\big)=:\sum_{i=1}^42^{i-1}\Delta_i+\rO(n^{-4/3+C\epsilon}),
\end{gather*}}
where 
\begin{gather*}
    \Delta_1:=\eta_0\sum_{k_1,k_2,k_3}^{*}\Delta_y\mathcal{E}(k_1k_1,k_2k_2,k_3k_3),\quad \Delta_2:=\eta_0\sum_{k_1,k_2,k_3}^{*}\Delta_y\mathcal{E}(k_1k_1,k_2k_3,k_2k_3),\\
    \Delta_3:=\eta_0\sum_{k_1,k_2,k_3}^{*}\Delta_y\mathcal{E}(k_1k_2,k_1k_2,k_3k_3),\quad \Delta_4:=\eta_0\sum_{k_1,k_2,k_3}^{*}\Delta_y\mathcal{E}(k_1k_2,k_1k_3,k_2k_3),
\end{gather*}
with {$\Delta_y:=\mathbb{E}(\prod_{i=1}^3y_{k_i1}^2-\prod_{i=1}^3\widetilde{y}_{k_i1}^2)$.} In the following, we aim to show that {$\Delta_i=\rO(n^{-1-c+C\epsilon})$} for each $i$. In fact, one may see that
{
\begin{gather*}
\begin{split}
    \Delta_y=\mathbb{E}\xi^6_1\big(u_{k_11}^2u_{k_21}^2u_{k_31}^2-z_{k_11}^2z_{k_21}^2z_{k_31}^2-3u_{k_11}^2u_{k_21}^2z_{k_31}^2+3u_{k_11}^2z_{k_21}^2z_{k_31}^2\big)=\rO(n^{-4}),
\end{split}
\end{gather*}}
where we again used \cite[Theorem V.1]{Wen2021} and the fact that $k_1,k_2,k_3$ are distinct. Together with Lemma \ref{lem_somekeyestimates}, we conclude that for each $i=1,\dots,4$, $\Delta_i={\rO(n^{-1-c+C\epsilon})}.$
From above estimates, one can easily obtain that
\begin{gather*}
    \mathbb{E}\big(\eta_0\mathbf{y}_1^{*}(G_1^{(1)})^2\mathbf{y}_1(\mathbf{y}_1^{*}G_1^{(1)}\mathbf{y}_1)^2\big)-\mathbb{E}\big(\eta_0\mathbf{\widetilde{y}}_1^{*}(G_1^{(1)})^2\mathbf{\widetilde{y}}_1(\mathbf{\widetilde{y}}_1^{*}G_1^{(1)}\mathbf{\widetilde{y}}_1)^2\big)=\rO(n^{-1-c+C\epsilon}).
\end{gather*}
Combining the results of all cases of $\alpha$ and $\beta$, we can show \eqref{eq_greenfuncomp_diffdecomp2} and conclude the proof.
\end{proof}

\subsection{Proof of Lemma \ref{lem_somekeyestimates}}\label{sec_append_prior}
In this subsection, we show the estimates in \eqref{eq_greenfuncomp_priorest1}-\eqref{eq_greenfuncomp_priorest3} following Appendix B of \cite{bao2022extreme}.
\begin{proof}[\bf Proof of \eqref{eq_greenfuncomp_priorest1}]
    Recall that $z=E+\rm i\eta_0$ with $\eta_0=n^{-2/3-\epsilon}$. By Cauchy's integral formula, we have
    \begin{gather*}
        \mathbf{y}_{\gamma}^{*}(G_{\gamma}^{(\gamma)})^2\mathbf{y}_{\gamma}=\frac{1}{2\pi \rm i}\oint_{\Gamma_{\eta_0}}\frac{\mathbf{y}_{\gamma}^{*}G_{\gamma}^{(\gamma)}(x)\mathbf{y}_{\gamma}}{(x-z)^2}\mathrm{d} x,
    \end{gather*}
    where $\Gamma_{\eta_0}$ is the disk centered at $z$ with radius $\eta_0/2$. Decompose that 
    \begin{gather*}
        |\mathbf{y}_{\gamma}^{*}(G_{\gamma}^{(\gamma)})^2\mathbf{y}_{\gamma}|\lesssim \left|\oint_{\Gamma_{\eta_0}}\frac{\xi^2_{\gamma}\operatorname{tr}G_{\gamma}^{(\gamma)}(x)}{p(x-z)^2}\mathrm{dx}\right|+\left|\oint_{\Gamma_{\eta_0}}\frac{\mathbf{y}_{\gamma}^{*}G_{\gamma}^{(\gamma)}(x)\mathbf{y}_{\gamma}-\xi^2_{\gamma}p^{-1}\operatorname{tr}G_{\gamma}^{(\gamma)}(x)}{(x-z)^2}\mathrm{d} x\right|.
    \end{gather*}
For the first term, we see from Cauchy's integral formula, Theorem \ref{thm_main_locallaws}, Lemmas \ref{lem: basictool_greenfunciden}, \ref{lem: basictool_largedevia} and \ref{thm_main_asymptotic laws} that   
    \begin{gather*}
        \left|\oint_{\Gamma_{\eta_0}}\frac{\xi^2_{\gamma}\operatorname{tr}G_{\gamma}^{(\gamma)}(x)}{p(\mathrm{x}-z)^2}\mathrm{dx}\Big|\lesssim\Big|\frac{\xi^2_{\gamma}\operatorname{tr}(G_{\gamma}^{(\gamma)}(z))^2}{p}\right|\lesssim\frac{1}{p}\sum_{i=1}^p\frac{1}{|\lambda_i^{(\gamma)}-z|^2}=\frac{\operatorname{Im}\operatorname{tr}\mathcal{G}_{\gamma}^{(\gamma)}}{p\eta_0}\prec\frac{1}{\sqrt{\eta_0}}=n^{1/3+\epsilon}.
        \end{gather*}
With similar reasoning, for the second term, we have that 
    \begin{gather*}
        \left|\oint_{\Gamma_{\eta_0}}\frac{\mathbf{y}_{\gamma}^{*}G_{\gamma}^{(\gamma)}(x)\mathbf{y}_{\gamma}-\xi^2_{\gamma}p^{-1}\operatorname{tr}G_{\gamma}^{(\gamma)}(x)}{(x-z)^2}\mathrm{dx}\right|\prec\Big(\frac{\operatorname{Im}\operatorname{tr}G_{\gamma}^{(\gamma)}}{p^2\eta_0^3}\Big)^{1/2}\lesssim\Big(\frac{\operatorname{Im}\operatorname{tr}\mathcal{G}_{\gamma}^{(\gamma)}}{p^2\eta_0^3}\Big)^{1/2}\prec n^{1/3+\epsilon}.
    \end{gather*}
This completes our proof. 
\end{proof}
\begin{proof}[\bf Proof of \eqref{eq_greenfuncomp_priorest2}]
 For the diagonal case,  we only show the result for {$(G_0(z))_{11}$} when $z=E+\mathrm{i}\eta_0$ with $\eta_0=n^{-2/3-\epsilon}$, $|E-\lambda_{+}|\le n^{-2/3+\epsilon}$ for $\gamma=0$, while other cases $(G_{\gamma}^{(\gamma)}(z))_{ii}, \gamma \geq 1$ can be handled similarly. By resolvent identity (see equation (5.5) of \cite{yang2019edge}), one has
    \begin{gather*}
        (G_0(z))_{11}=\frac{1}{-z(1+\mathbf{r}_1^{*} \mathcal{G}_0^{[1]}(z)\mathbf{r}_1)}=\frac{1}{-z\big(1+\sigma_1(m_{2n}^{[1]}(z)+\mathrm{R}_1+\mathrm{R}_2+\mathrm{R}_3)\big)},
    \end{gather*}
    where $\mathbf{r}_i:=(\xi_1w_{i1}\sqrt{\sigma_i},\xi_2w_{i2}\sqrt{\sigma_i},\dots,\xi_nw_{in}\sqrt{\sigma_i})$ is $i$-th row of $V$ as in (\ref{eq_samplecov_t}) and {the upper index $[i]$ denotes the $i$-th row of the corresponding data matrix $Y$ being deleted, and} $\mathrm{R}_3:={\sum_{i\neq j}\xi_i\xi_jw_{1i}w_{1j}(\mathcal{G}_0^{[1]}(z) )_{ij}}$ and
    \begin{gather*}
        \mathrm{R}_1:={-z^{-1}\sum_i\xi^2_iw_{1i}^2(1+m_{1n}^{[1]}(z)D^2)^{-1}_{ii}}-m_{2n}^{[1]}(z),\quad \mathrm{R}_2:={\sum_i\xi^2_iw_{1i}^2\big((  \mathcal{G}_0^{[1]}(z) )_{ii}}+z^{-1}(1+m_{1n}^{[1]}(z)D^2)^{-1}_{ii}\big).
    \end{gather*}
It is easy to see that Theorem \ref{thm_main_locallaws} still applies to $\mathcal{G}_0^{[1]}(z)$ so that 
    \begin{gather*}
        \Big|( {\mathcal{G}_0^{[1]}(z)} )_{ij}+z^{-1}(1+m_{1n}^{[1]}(z)D^2)^{-1}_{ij}\Big|\prec\sqrt{\frac{\operatorname{Im} m_{1n}^{[1]}(z)}{ p \eta}}+\frac{1}{p \eta}.
    \end{gather*}
For $\mathrm{R}_3$, let $\{\varepsilon_i\}$ be i.i.d. Rademacher random variables, we see that $\mathrm{R}_3\overset{d}{=}\sum_{i\neq j}\xi_i\xi_j\varepsilon_i\varepsilon_j{ w_{1i}w_{1j}}( {\mathcal{G}_0^{[1]}(z)} )_{ij}.$
    Therefore, by large deviation of the quadratic form of $\varepsilon_i$'s, we have
    \begin{gather}\label{eq_argumentkey}
        \left|\sum_{i\neq j}\xi_i\xi_j\varepsilon_i\varepsilon_j{w_{1i}w_{1j}}({\mathcal{G}_0^{[1]}(z)} )_{ij}\right|\prec\Big(\sum_{i\neq j}\xi^2_i\xi^2_j{ w^2_{1i}w^2_{1j}}\big(( { \mathcal{G}_0^{[1]}(z)} )_{ij}\big)^2\Big)^{1/2}\prec p^{-1/3+\epsilon}.
    \end{gather}
    It follows that $\mathrm{R}_3\prec p^{-1/3+\epsilon}$. Next, for $\mathrm{R}_2$, we have
    \begin{gather*}
        |\mathrm{R}_2|\le\sum_i\xi^2_i{w_{1i}^2}\left|{ (\mathcal{G}_0^{[1]}(z)} )_{ii}+z^{-1}(1+m_{1n}^{[1]}(z)D^2)^{-1}_{ii}\right|\prec p^{-1/3+\epsilon}\sum_i\xi^2_i{w_{1i}^2}\prec n^{-1/3+\epsilon}.
    \end{gather*}
    Lastly, for $\mathrm{R}_1$, using the fact $|m_{2n}^{[1]}(z)|\sim1$, we have the trivial bound $|\mathrm{R}_1|\prec n^{-1/2}$ since {$w_{1i}^2$'s} are i.i.d.. Then by the definition of $m_{2n}^{[1]}$ and Lemma \ref{lem: basictool_greenfunciden}, together with above estimates, we have $(G_0(z))_{11}=-z^{-1}(1+\sigma_1m_{2n}(z))^{-1}+\rO_{\prec}(n^{-1/3+\epsilon}).$
    For $i\neq j$, by the resolvent identity (see equation (5.6) of \cite{yang2019edge}), one has
    \begin{gather*}
        |(G_0(z))_{ij}|=|z(G_0(z))_{ii}(G^{[1]}_{0}(z))_{jj}\mathbf{r}_i^{*}\mathcal{G}^{[ij]}_0(z)\mathbf{r}_j|\prec |\mathbf{r}_i^{*}\mathcal{G}^{[ij]}(z)\mathbf{r}_j|=\left|\sum_{k,l}\xi_k\xi_lw_{ik}w_{jl}\sqrt{\sigma}_k\sqrt{\sigma}_l(\mathcal{G}_0^{[ij]}(z))_{kl}\right|.
    \end{gather*}
Then we can conclude the proof using a discussion similar to (\ref{eq_argumentkey}) with the help of Theorem \ref{thm_main_locallaws}. 
\end{proof}
\begin{proof}[\bf Proof of \eqref{eq_greenfuncomp_priorest3}]
As in the proof of (\ref{eq_greenfuncomp_priorest2}), we only consider $\gamma=0.$ By Cauchy's integral formula, we have
    \begin{gather*}
        |((G_0(z))^2)_{ij}|\lesssim\Big|\oint_{\Gamma_{\eta_0}}\frac{(G_0(x))_{ij}}{(x-z)^2}\mathrm{d}x\Big|.
    \end{gather*}
    For the case $i\neq j$, we obtain straight forwards from \eqref{eq_greenfuncomp_priorest2} that
    \begin{gather*}
        \Big|\oint_{\Gamma_{\eta_0}}\frac{(G_0(x))_{ij}}{(x-z)^2}\mathrm{d}x\Big|\prec n^{1/3+\epsilon}.
    \end{gather*}
    If $i=j$,  we see from Lemma \ref{thm_main_asymptotic laws} that $|m_{2n}^{\prime}(z)|\lesssim n^{1/3+\epsilon}$. Then we have
    \begin{gather*}
    \begin{split}
        &\left|\oint_{\Gamma_{\eta_0}}\frac{ {(G_0(x))_{ii}} }{(x-z)^2}\mathrm{d}x\right|\lesssim\left|\oint_{\Gamma_{\eta_0}}\frac{ {(G_0(x))_{ii} +{x^{-1}(1+\sigma_i m_{2n}(x))^{-1}}}}{(x-z)^2}\mathrm{d}x \right|+\left|\oint_{\Gamma_{\eta_0}}\frac{{{x^{-1}(1+\sigma_i m_{2n}(x))^{-1}}}
        }{(x-z)^2}\mathrm{d}x \right|\\
        &\prec\frac{n^{-1/3+\epsilon}}{\eta_0}+\rO(m_{2n}^{\prime}(z))\prec n^{1/3+2\epsilon}.
    \end{split}
    \end{gather*}
    This completes the proof of \eqref{eq_greenfuncomp_priorest3}.
\end{proof}

\appendix

\section{Proof of the local laws}\label{sec_prooflocalaw}
We prove Theorem \ref{thm_main_locallaws} following \cite{yang2019edge}. Due to similarity, we only provide the key ingredients of the counterparts of \cite{yang2019edge}. 
\subsection{Basic tools and auxiliary lemmas}
In this section we collect some necessary notations and technical tools. Denote the singular value decomposition of $V$ in (\ref{eq_samplecov_t}) as $ V=\sum_{k=1}^{p\wedge n}\sqrt{\lambda_k}\gamma_k\zeta_k^{*}.$  Then for the resolvents in \eqref{eq_resolvents}, we can get for $1 \leq i,j \leq p$ and $1 \leq \mu,\nu \leq n$,
\begin{gather}\label{eq: spectraldecomp}
    G_{ij}=\sum_{k=1}^p\frac{z\gamma_k(i)\gamma^{*}_k(j)}{\lambda_k-z},\quad \mathcal{G}_{ji}=\sum_{k=1}^n\frac{\zeta_k(j)\zeta^{*}_k(i)}{\lambda_k-z}.
\end{gather}

First, we will frequently use the following identities whose proof can be found in \cite{DingandYang2018,PillaiandYin2014,yang2019edge}.  
\begin{lemma}\label{lem: basictool_resolvent}
For any $\mathcal{T}\subset \{1,2,\cdots,n\}$, we have that 
		\begin{eqnarray*}
			\mathcal{G}_{ii}^{(\mathcal{T})}(z) & = & -\frac{1}{z+z\mathbf{v}_{i}^{*}{ G}^{(i\mathcal{T})}(z)\mathbf{v}_{i}},\qquad\forall i\in\mathcal{I}\setminus \mathcal{T},\\
			\mathcal{G}_{ij}^{(\mathcal{T})}(z) & = & z\mathcal{G}_{ii}^{(\mathcal{T})}(z)\mathcal{G}_{jj}^{(i\mathcal{T})}(z)\mathbf{v}_{i}^{*}{ G}^{(ij\mathcal{T})}(z)\mathbf{v}_{j},\qquad\forall i,j\in\mathcal{I}\setminus \mathcal{T},i\ne j,\\
			\mathcal{G}_{ij}^{(\mathcal{T})}(z) & = & \mathcal{G}_{ij}^{(k\mathcal{T})}(z)+\frac{\mathcal{G}_{ik}^{(\mathcal{T})}(z)\mathcal{G}_{kj}^{(\mathcal{T})}(z)}{\mathcal{G}_{kk}^{(\mathcal{T})}(z)},\qquad\forall i,j,k\in\mathcal{I}\setminus \mathcal{T},i,j\ne k.
		\end{eqnarray*}
Moreover,
\begin{gather*}
    \sum_{1\le i\le p}|G_{ji}|^2=\sum_{1\le i\le p}|G_{ij}|^2=\frac{|z|^2}{\eta}\operatorname{Im}(\frac{G_{jj}}{z}), \  \sum_{1\le\mu\le n}|\mathcal{G}_{\nu\mu}|^2=\sum_{1\le\mu\le n}|\mathcal{G}_{\mu\nu}|^2=\frac{\operatorname{Im}\mathcal{G}_{\nu\nu}}{\eta},\\
    \|G\Sigma\|_F^2=\eta^{-1}\operatorname{Im}\operatorname{tr}(G\Sigma^2).
\end{gather*} 		
\end{lemma}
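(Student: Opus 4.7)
The six identities are purely algebraic in nature, so I would prove them using a small set of reusable tools: Schur's block complement formula, the intertwining identity $V(V^*V-zI)^{-1}V^* = I + zG$ (and its restricted variants), the Sherman--Morrison rank-one update, and the resolvent differencing identity $G(z) - G(\bar z) = (z-\bar z)G(z)G(\bar z)$ together with $G(z)^* = G(\bar z)$. The only conceptual move is switching between the $p\times p$ and $n\times n$ sides, which is handled uniformly by the intertwining identity $V^{(i\mathcal{T})}\mathcal{G}^{(i\mathcal{T})}(V^{(i\mathcal{T})})^* = I + zG^{(i\mathcal{T})}$ (proved by multiplying $V^{(i\mathcal{T})}(\mathcal{S}^{(i\mathcal{T})}-zI) = (S^{(i\mathcal{T})}-zI)V^{(i\mathcal{T})}$ on the right by $\mathcal{G}^{(i\mathcal{T})}$ and on the left by $G^{(i\mathcal{T})}$).

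For the first identity, partition $\mathcal{S}^{(\mathcal{T})} - zI$ by isolating row/column $i$. Schur complement gives
\begin{equation*}
\mathcal{G}_{ii}^{(\mathcal{T})}(z) = \bigl(\mathbf{v}_i^*\mathbf{v}_i - z - \mathbf{v}_i^*V^{(i\mathcal{T})}\mathcal{G}^{(i\mathcal{T})}(V^{(i\mathcal{T})})^*\mathbf{v}_i\bigr)^{-1}.
\end{equation*}
Applying the intertwining identity to convert the inner $\mathcal{G}^{(i\mathcal{T})}$ into $G^{(i\mathcal{T})}$ produces the summand $\mathbf{v}_i^*\mathbf{v}_i + z\mathbf{v}_i^*G^{(i\mathcal{T})}\mathbf{v}_i$; the $\mathbf{v}_i^*\mathbf{v}_i$ term cancels and the stated formula appears. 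For the off-diagonal identity, I would apply $2\times 2$ Schur inversion to $\mathcal{S}^{(\mathcal{T})}-zI$ with rows/columns $i,j$ both isolated: the $(i,j)$ entry comes out as a product of diagonal entries of nested resolvents against an off-diagonal connector, which again the intertwining identity reduces to $-z\mathbf{v}_i^*G^{(ij\mathcal{T})}\mathbf{v}_j$. The third (leaving-one-out) identity is the Schur complement formula applied once more, now isolating index $k$: block inversion gives directly $\mathcal{G}_{ij}^{(\mathcal{T})} - \mathcal{G}_{ij}^{(k\mathcal{T})} = \mathcal{G}_{ik}^{(\mathcal{T})}\mathcal{G}_{kj}^{(\mathcal{T})}/\mathcal{G}_{kk}^{(\mathcal{T})}$. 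Equivalently, one may derive it from Sherman--Morrison applied to the rank-two perturbation going between $\mathcal{S}^{(k\mathcal{T})}$ and $\mathcal{S}^{(\mathcal{T})}$.

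The three norm identities all rest on the single fact
\begin{equation*}
G(z)G(z)^* = \tfrac{1}{\eta}\operatorname{Im} G(z),
\end{equation*}
which follows by setting $\bar z$ in the resolvent differencing identity and dividing by $z-\bar z = 2\mathrm{i}\eta$. Taking the $(j,j)$ entry of $GG^*$ and reading off the appropriate $z$-normalization from the spectral decomposition \eqref{eq: spectraldecomp} gives the first Ward identity; the same move applied to $\mathcal{G}$ gives the second (no $|z|^2$ factor because the spectral representation of $\mathcal{G}$ in \eqref{eq: spectraldecomp} carries no $z$). For the Frobenius identity, use cyclicity of the trace and self-adjointness of $\Sigma$:
\begin{equation*}
\|G\Sigma\|_F^2 = \operatorname{tr}(\Sigma G^*G\Sigma) = \operatorname{tr}(\Sigma^2 GG^*) = \tfrac{1}{\eta}\operatorname{tr}(\Sigma^2\operatorname{Im} G) = \tfrac{1}{\eta}\operatorname{Im}\operatorname{tr}(G\Sigma^2).
\end{equation*}

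There is no genuine obstacle here: the only thing that needs care is keeping the minor-superscript bookkeeping consistent (which $(\mathcal{T})$, $(i\mathcal{T})$, $(ij\mathcal{T})$ appears where) and applying the intertwining identity in the right place each time. Since identities of exactly this type are established in \cite{DingandYang2018,PillaiandYin2014,yang2019edge}, the expository strategy would be to give the calculation for the first identity in full, and then refer to those papers for the analogous verifications of the remaining five.
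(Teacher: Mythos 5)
Your proposal is correct, and it uses the standard Schur-complement/intertwining machinery; the paper itself supplies no proof for this lemma (it only points to \cite{DingandYang2018,PillaiandYin2014,yang2019edge}, which argue exactly as you do), so there is no alternative route in the paper to compare against.

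Two small points worth tightening. First, in the first Ward identity you derive $GG^*=\eta^{-1}\operatorname{Im}G$ and then say the $|z|^{2}$ factor comes from ``reading off the appropriate $z$-normalization from the spectral decomposition.'' What is actually going on is a notational inconsistency in the paper: with $G=(S-zI)^{-1}$ as stated in \eqref{eq_resolvents}, the identity should read $\sum_i|G_{ji}|^2=\eta^{-1}\operatorname{Im}G_{jj}$, whereas the form $\tfrac{|z|^2}{\eta}\operatorname{Im}(G_{jj}/z)$ written in the lemma matches the convention $G=z(S-zI)^{-1}$ used in \cite{yang2019edge} (which is also what the extra $z$ in \eqref{eq: spectraldecomp} reflects). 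Your computation is right under either convention once the normalization is fixed; it would be clearer to name the discrepancy rather than absorb it into a vague ``$z$-normalization.'' Second, the step $\operatorname{tr}(\Sigma G^{*}G\Sigma)=\operatorname{tr}(\Sigma^{2}GG^{*})$ is not a bare cyclicity move: after cyclicity you have $\operatorname{tr}(\Sigma^{2}G^{*}G)$, and passing to $\operatorname{tr}(\Sigma^{2}GG^{*})$ uses that $G$ is complex symmetric, $G^{T}=G$ (equivalently $(G^{*}G)^{T}=GG^{*}$), which holds because $S=VV^{*}$ is a real symmetric matrix. That hypothesis deserves a word, since the identity is false for a general non-symmetric $G$.
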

Second, the following estimates are also for our discussions whose proof can also be found in \cite{DingandYang2018,PillaiandYin2014,yang2019edge}.  

\begin{lemma}\label{lem: basictool_greenfunciden}
 The following estimates hold uniformly for all $z\in\mathbf{D}$ in (\ref{eq: spectraldomainD}) and $C>0$,
\begin{gather*}
    \|G\|+\|\mathcal{G}\|\le C\eta^{-1},\quad \|\partial_zG\|+\|\partial_z\mathcal{G}\|\le C\eta^{-2}.
\end{gather*}
The above estimates remain true for $G^{(\mathcal{T})}$ instead of $G$ for any $\mathcal{T}\subset\{1,\dots, p\}$
  \begin{gather*}
        |m_1-m_1^{(\mathcal{T})}|+|m_2-m_2^{(\mathcal{T})}|\le\frac{C|\mathcal{T}|}{n\eta}.
    \end{gather*}
\end{lemma}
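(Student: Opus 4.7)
The plan is to handle the three claims in sequence, each by a short spectral calculus argument.

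First, I will establish the operator norm bounds $\|G\|+\|\mathcal{G}\|\le C\eta^{-1}$. Since $S=VV^{*}$ and $\mathcal{S}=V^{*}V$ are Hermitian positive semidefinite, their spectra lie on $[0,\infty)$, so $\operatorname{dist}(z,\operatorname{spec}(S))\ge\eta$ for $z=E+\mathrm{i}\eta\in\mathbb{C}_{+}$, and the spectral theorem gives $\|G(z)\|\le\eta^{-1}$ (and similarly for $\mathcal{G}$). Since the minors $S^{(\mathcal{T})}$ and $\mathcal{S}^{(\mathcal{T})}$ remain Hermitian positive semidefinite, the identical argument treats $G^{(\mathcal{T})}$ and $\mathcal{G}^{(\mathcal{T})}$. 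The derivative bounds then follow at once from $\partial_{z}G=G^{2}$, obtained by differentiating $(S-zI)G(z)=I$, yielding $\|\partial_{z}G\|\le\|G\|^{2}\le\eta^{-2}$; the same holds for $\mathcal{G}$ and for all minors.

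Second, for the minor-removal estimate $|m_{1}-m_{1}^{(\mathcal{T})}|+|m_{2}-m_{2}^{(\mathcal{T})}|\le C|\mathcal{T}|/(n\eta)$, I would use Cauchy interlacing. Since $(V^{(\mathcal{T})})^{*}V^{(\mathcal{T})}$ is a principal $(n-|\mathcal{T}|)\times(n-|\mathcal{T}|)$ submatrix of $V^{*}V$, the eigenvalues of $\mathcal{S}^{(\mathcal{T})}$ interlace with those of $\mathcal{S}$, and the same holds for the nonzero eigenvalues of $S^{(\mathcal{T})}$ and $S$. Pairing eigenvalues across the interlacing and telescoping, together with the fact that the total variation of $x\mapsto 1/(x-z)$ over $\mathbb{R}$ is $\pi/\eta$, delivers $|\operatorname{tr}G-\operatorname{tr}G^{(\mathcal{T})}|\le C|\mathcal{T}|/\eta$, hence the claimed averaged estimate. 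For the weighted versions $m_{1}=p^{-1}\operatorname{tr}(G\Sigma)$ and $m_{2}=p^{-1}\operatorname{tr}(\mathcal{G}D^{2})$, I would combine the same interlacing with the uniform bounds $\|\Sigma\|,\|D^{2}\|=\mathrm{O}(1)$, realizing the rank-$|\mathcal{T}|$ perturbation as $|\mathcal{T}|$ successive rank-one Sherman--Morrison updates, each contributing at most $C/(n\eta)$ after cancellation.

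The hard part is keeping a single inverse power of $\eta$ in the weighted-trace bound: the naive resolvent identity $G-G^{(\mathcal{T})}=-G V_{\mathcal{T}}V_{\mathcal{T}}^{*}G^{(\mathcal{T})}$ combined with $\|G\|,\|G^{(\mathcal{T})}\|\le\eta^{-1}$ only delivers $C|\mathcal{T}|/(n\eta^{2})$, so the interlacing/telescoping representation is essential rather than a direct operator-norm estimate on $G^{(\mathcal{T})}\Sigma G$. This refinement, together with the bookkeeping for the rank-one updates, is carried out in Lemma A.2 of \cite{yang2019edge}, which I would follow verbatim.
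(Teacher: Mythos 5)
Your proposal is correct and matches the paper's treatment: the paper does not prove this lemma directly but simply cites \cite{DingandYang2018,PillaiandYin2014,yang2019edge}, and you supply the standard spectral-theorem argument for $\|G\|\le\eta^{-1}$, the identity $\partial_z G = G^2$ for the derivative bound, and defer to Lemma A.2 of \cite{yang2019edge} for the minor-removal estimate. One imprecision in your sketch is worth flagging: interlacing does give $|\operatorname{tr}G - \operatorname{tr}G^{(\mathcal{T})}|\le C|\mathcal{T}|/\eta$ for the unweighted trace, but the weighted traces $m_1 = p^{-1}\operatorname{tr}(G\Sigma)$ and $m_2 = p^{-1}\operatorname{tr}(D^2\mathcal{G})$ depend on the eigenvectors of the resolvent as well as its eigenvalues, and interlacing alone does not control those. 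The single power of $\eta$ there comes from the Ward identity rather than from interlacing. Concretely, after one Sherman--Morrison step,
\[
\operatorname{tr}\bigl((G - G^{(i)})\Sigma\bigr) = -\frac{\mathbf{v}_i^*G^{(i)}\Sigma G^{(i)}\mathbf{v}_i}{1+\mathbf{v}_i^*G^{(i)}\mathbf{v}_i},
\]
the numerator is at most $\|\Sigma\|\,\|G^{(i)}\mathbf{v}_i\|^2 = \|\Sigma\|\,\eta^{-1}\operatorname{Im}(\mathbf{v}_i^*G^{(i)}\mathbf{v}_i)$, and the elementary bound $\operatorname{Im}(\mathbf{v}_i^*G^{(i)}\mathbf{v}_i)\le|1+\mathbf{v}_i^*G^{(i)}\mathbf{v}_i|$ cancels the denominator, giving $O(\eta^{-1})$ per removal; dividing by $p\asymp n$ and telescoping over the $|\mathcal{T}|$ removals yields $C|\mathcal{T}|/(n\eta)$. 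This is exactly the content of the lemma you cite, so there is no genuine gap, only a misattribution of the cancellation mechanism to interlacing rather than to the Ward identity.
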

Finally, the following estimates will be used in our calculations whose proof can be found in \cite{DingandYang2018,Wen2021}. 

\begin{lemma}\label{lem: basictool_largedevia}
(1).     Let $\mathbf{u}=(u_1,\dots,u_p)$ be a $p$-dimensional random vector of spherical uniform distribution. For any $i\neq j\in\{1,\dots,p\}$, let $k_1,k_2$ be two non-negative integers, we have
    \begin{gather}\label{lem: basictool_momentsofu}
        \mathbb{E}|u_i^2|=\frac{1}{p},\quad \mathbb{E}|u_i^4|=\frac{3}{p(2+p)},\quad \mathbb{E}|u_i^2u_j^2|=\frac{1}{p(2+p)},\quad
        \mathbb{E}|u_i^{k_1}u_j^{k_2}|=0,\quad \text{if $k_1+k_2$ is odd.}
    \end{gather}
(2). $\mathbf{w}=(w_{1},\dots,w_{p})^{*}$,
		$\widetilde{\mathbf{w}}=(\widetilde{w}_{1},\dots,\widetilde{w}_{p})^{*}$ be two independent random vectors as in (\ref{eq_samplecov_t}). Suppose $A=(a_{ij})$  an $p\times p$ matrix and $\mathbf{b}=(b_{1},\dots,b_{p})^{*}$
		an $p$-dimensional vector, where $A$ and $\mathbf{b}$ may be
		complex-valued and $\mathbf{w},\tilde{\mathbf{w}},A,\mathbf{b}$ are
		independent. Then as $p\to\infty$
		\begin{eqnarray*}
		|\mathbf{b}^{*}\mathbf{w}|  \prec \sqrt{\frac{\|\mathbf{b}\|^{2}}{p},} \		|\mathbf{w}^{*}A\mathbf{w}-\frac{1}{p}{\rm tr}A|  \prec  \frac{1}{p}\|A\|_{F}, \	\Big|\mathbf{w}^{*}A\widetilde{\mathbf{w}}\Big| \prec  \frac{1}{p}\|A\|_{F}.
		\end{eqnarray*}
\end{lemma}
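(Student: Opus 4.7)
The plan is to prove the two parts by reducing everything to the Gaussian case via the standard representation of the spherical uniform distribution.

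\textbf{Part (1): Moments of $\mathbf{u}$.} I would use the fact that if $\mathbf{g}=(g_1,\dots,g_p)$ is a standard Gaussian vector in $\mathbb{R}^p$, then $\mathbf{u}=\mathbf{g}/\|\mathbf{g}\|$ is uniform on $\mathbb{S}^{p-1}$ and is independent of $\|\mathbf{g}\|$. Independence gives the key identity
\be{
\IE\bbcls{\prod_{k} g_{i_k}^{\alpha_k}} = \IE\bbcls{\prod_k u_{i_k}^{\alpha_k}} \cdot \IE\bbcls{\|\mathbf{g}\|^{\sum_k \alpha_k}}.
}
The chi-squared moments are $\IE\|\mathbf{g}\|^{2m} = p(p+2)\cdots(p+2m-2)$. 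With $\IE[g_i^2]=1$, $\IE[g_i^4]=3$, $\IE[g_i^2 g_j^2]=1$ for $i\ne j$, this yields $\IE u_i^2 = 1/p$, $\IE u_i^4 = 3/(p(p+2))$, and $\IE u_i^2 u_j^2 = 1/(p(p+2))$, as claimed. The vanishing of mixed moments with an odd total degree follows from the $\mathbb{Z}_2$-symmetry: the distribution of $\mathbf{u}$ is invariant under any sign flip $u_i\mapsto -u_i$, which forces any monomial $u_i^{k_1}u_j^{k_2}$ with $k_1+k_2$ odd to have mean zero.

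\textbf{Part (2): Large-deviation bounds for linear and quadratic forms.} I would split the argument into the Gaussian case and the spherical case. For $\mathbf{w}=\mathbf{z}_i$ with $\mathbf{z}_i\sim\mathcal{N}(0,p^{-1}I_p)$, the bounds
\be{
|\mathbf{b}^*\mathbf{z}| \prec \sqrt{\|\mathbf{b}\|^2/p},\quad |\mathbf{z}^*A\mathbf{z}-p^{-1}\operatorname{tr}A|\prec p^{-1}\|A\|_F,\quad |\mathbf{z}^*A\widetilde{\mathbf{z}}|\prec p^{-1}\|A\|_F
}
are standard Hanson--Wright / chaos estimates: by Markov's inequality at order $2q$ and the Gaussian moment formula (Wick's theorem), for any $q\in\mathbb{N}$ one obtains $\IE|\mathbf{z}^*A\mathbf{z}-p^{-1}\operatorname{tr}A|^{2q}\le C_q p^{-2q}\|A\|_F^{2q}$, giving the stochastic dominance by letting $q\to\infty$. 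These are precisely the estimates recorded in \cite{DingandYang2018,Wen2021}.

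For $\mathbf{w}=\mathbf{u}_i$, I would again write $\mathbf{u}=\mathbf{g}/\|\mathbf{g}\|$ and exploit the concentration $\|\mathbf{g}\|^2/p=1+\rO_{\prec}(p^{-1/2})$, together with the lower tail bound $\IP(\|\mathbf{g}\|^2 \le p/2)\le e^{-cp}$. Then
\be{
\mathbf{b}^*\mathbf{u} = \frac{\mathbf{b}^*\mathbf{g}}{\|\mathbf{g}\|},\qquad
\mathbf{u}^*A\mathbf{u} = \frac{\mathbf{g}^*A\mathbf{g}}{\|\mathbf{g}\|^2},\qquad
\mathbf{u}^*A\widetilde{\mathbf{u}}=\frac{\mathbf{g}^*A\widetilde{\mathbf{g}}}{\|\mathbf{g}\|\,\|\widetilde{\mathbf{g}}\|},
}
and each numerator satisfies the Gaussian estimate above (with $\mathbf{g}$ having variance $1$ rather than $1/p$, so the bounds are $|\mathbf{b}^*\mathbf{g}|\prec\|\mathbf{b}\|$, $|\mathbf{g}^*A\mathbf{g}-\operatorname{tr}A|\prec\|A\|_F$, $|\mathbf{g}^*A\widetilde{\mathbf{g}}|\prec\|A\|_F$). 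Dividing by $\|\mathbf{g}\|\sim\sqrt{p}$ (resp.\ $\|\mathbf{g}\|^2\sim p$) yields the claimed $p^{-1/2}$ (resp.\ $p^{-1}$) scaling. For the centered quadratic form one must additionally verify that $p^{-1}\operatorname{tr}A$ is the right centering: writing $\mathbf{u}^*A\mathbf{u}-p^{-1}\operatorname{tr}A = \|\mathbf{g}\|^{-2}(\mathbf{g}^*A\mathbf{g}-\operatorname{tr}A) + (\|\mathbf{g}\|^{-2}-p^{-1})\operatorname{tr}A$; the first term is $\rO_\prec(p^{-1}\|A\|_F)$ by the Gaussian bound, while the second, using $|\,\|\mathbf{g}\|^{-2}-p^{-1}|\prec p^{-3/2}$ and $|\operatorname{tr}A|\le\sqrt{p}\|A\|_F$, is also $\rO_\prec(p^{-1}\|A\|_F)$.

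\textbf{Main obstacle.} The only slightly delicate point is handling the event $\{\|\mathbf{g}\|$ small$\}$ when dividing, but the exponential tail $\IP(\|\mathbf{g}\|^2\le p/2)\le e^{-cp}$ makes this contribution negligible in the sense of $\prec$. All remaining steps are routine moment calculations, and I expect the whole proof to be short once the Gaussian representation is in place.
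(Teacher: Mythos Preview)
Your proposal is correct. The paper does not actually supply a proof of this lemma: it simply records the estimates and cites \cite{DingandYang2018,Wen2021} for the details, so there is no argument to compare against beyond those references. What you outline---the Gaussian representation $\mathbf{u}=\mathbf{g}/\|\mathbf{g}\|$ with independence of direction and radius for Part~(1), and Hanson--Wright/moment bounds in the Gaussian case followed by transfer to the spherical case via the same representation for Part~(2)---is precisely the standard route taken in those references, and all the steps you sketch (the $\chi^2$ moment formula, the decomposition of $\mathbf{u}^*A\mathbf{u}-p^{-1}\operatorname{tr}A$ into the two error pieces, the Cauchy--Schwarz bound $|\operatorname{tr}A|\le\sqrt{p}\|A\|_F$, and the exponential lower-tail control on $\|\mathbf{g}\|^2$) are sound.
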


\subsection{Proof of Theorem \ref{thm_main_locallaws}}
For simplicity, following \cite{Ding2023,ding2021spiked,yang2019edge} we introduce the following notations. 
  \begin{gather*}
      \Pi_1(z):=-(1+m_{2n}(z)\Sigma)^{-1},\quad \Pi_2(z):=-z^{-1}(1+m_{1n}(z)D^2)^{-1}.
  \end{gather*}
Note that $ \frac{1}{zp}\operatorname{tr}\Pi_1=m_n(z).$ Moreover, we denote $m_2(z):=p^{-1}\sum_{i=1}^n\xi^2_i\mathcal{G}_{ii}(z).$ In addition, it is more convenient to introduce the following parameters
\begin{gather*}
\Lambda\equiv\Lambda(z):=\max_{i,j}|\mathcal{G}-\Pi_2|_{ij},\quad \Theta\equiv\Theta(z):=|m_1(z)-m_{1n}(z)|+|m_2(z)-m_{2n}(z)|,\\
  \Lambda_o\equiv\Lambda_o(z):=\max_{i\neq j}|\mathcal{G}_{ij}|,\quad  \Psi_{\Theta}\equiv\Psi_{\Theta}(z):=\sqrt{\frac{\operatorname{Im}m_{1n}(z)+\Theta}{p\eta}}+\frac{1}{p\eta}.
\end{gather*}
It is easy to see that on the event $\Xi:=\{\Lambda\le(\log p)^{-1}\},$ the following holds uniformly on $i,j\in \{1,2,\cdots,n\}$ and $z\in\mathbf{D}$
\begin{gather}\label{lem: basictool_prior}
    \{\mathbf{1}(\Xi)+\mathbf{1}(\eta\ge1)\}|\mathcal{G}_{ij}^{(\mathcal{T)}}|+\mathbf{1}(\Xi)|(\mathcal{G}^{(\mathcal{T})}_{ii})^{-1}|=\rO_{\prec}(1).
\end{gather}
Moreover, by \eqref{eq: asymlaws_main_estm12n} and \eqref{eq: asymlaws_main_lowerbound}, when $\Theta\prec(n\eta)^{-1}$,
\begin{gather*}
    \|\Pi_{1,2}\|=\rO(1),\quad \Psi_{\Theta}\gtrsim p^{-1/2},\quad \Psi_{\Theta}^2\lesssim(p\eta)^{-1},\quad \Psi_{\Theta}(z)\sim\sqrt{\frac{\operatorname{Im}m_{1n}(z)}{p\eta}}+\frac{1}{p\eta},\quad z\in\mathbf{D}_0.
\end{gather*}
Furthermore, we introduce the $Z$ variable
\begin{gather*}
    Z_{i}^{(\mathcal{T})}:=(1-\mathbb{E}_i)\big[\mathbf{v}_i^{*}G^{(i\mathcal{T})}\mathbf{v}_i\big],\quad i\notin\mathcal{T},
\end{gather*}
where $\mathbb{E}_i[\cdot]:=\mathbb{E}[\cdot|W^{(i)}]$ is the partial expectation over the randomness of the $i$-th row and column of $W$ in (\ref{eq_samplecov_t}). Thanks to (\ref{lem: basictool_momentsofu}), we see that observe that for both choice of $\mathbf{w}_i$, it follows
\begin{gather*}
    Z_i=\mathbf{v}_i^{*}G^{(i)}\mathbf{v}_i-\frac{\xi^2_i}{p}\operatorname{tr}(G^{(i)}\Sigma).
\end{gather*}

The following lemma is a counterpart of Lemma 5.9 of \cite{yang2019edge}. 
\begin{lemma}\label{lem: entrywiselocallaw_lem1}
    Suppose assumptions of Theorem \ref{thm_main_locallaws} hold.  Then for all $1 \leq i \leq n$ and $z \in \mathbf{D}$ uniformly
    \begin{gather*}
        \{\mathbf{1}(\Xi)+\mathbf{1}(\eta\ge1)\}(|Z_i|+\Lambda_o)\prec\Psi_{\Theta}.
    \end{gather*}
\end{lemma}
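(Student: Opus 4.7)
The strategy is the standard one for entrywise local laws: reduce $Z_i$ to a centered quadratic form, $\mathcal{G}_{ij}$ (for $i\neq j$) to a bilinear form, and then apply the large deviation estimates in Lemma \ref{lem: basictool_largedevia}(2) combined with the Ward identity to control the Frobenius-type norm that appears. Throughout, we work on the event $\{\mathbf{1}(\Xi)+\mathbf{1}(\eta\ge 1)\}$, on which \eqref{lem: basictool_prior} gives $|\mathcal{G}_{ii}^{(\mathcal{T})}|+|(\mathcal{G}_{ii}^{(\mathcal{T})})^{-1}|=\rO_\prec(1)$; for $\eta\ge 1$ these bounds follow from Lemma \ref{lem: basictool_greenfunciden} without invoking $\Xi$.

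\textbf{Bound on $|Z_i|$.} Since $\mathbf{v}_i=\xi_i T\mathbf{w}_i$ with $\mathbf{w}_i$ independent of $G^{(i)}$, write
\[
Z_i=\xi_i^2\bigl(\mathbf{w}_i^{*}A_i\mathbf{w}_i-p^{-1}\operatorname{tr}A_i\bigr),\qquad A_i:=T^{*}G^{(i)}T,
\]
where the subtraction follows from $\mathbb{E}_i[\mathbf{w}_i^*A_i\mathbf{w}_i]=p^{-1}\operatorname{tr}A_i$, valid both for $\mathbf{w}_i\sim U(\mathbb{S}^{p-1})$ (via \eqref{lem: basictool_momentsofu}) and for the Gaussian case. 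By Lemma \ref{lem: basictool_largedevia}(2), $|Z_i|\prec \xi_i^2 p^{-1}\|A_i\|_F\prec p^{-1}\|A_i\|_F$, using $\xi_i\le l$. For the Frobenius norm, since $T^{*}T=TT^{*}=\Sigma$ (as $T=\Sigma^{1/2}$ by WLOG diagonal $\Sigma$), the Ward identity $G^{(i)}(G^{(i)})^{*}=\eta^{-1}\operatorname{Im}G^{(i)}$ combined with the boundedness of $\Sigma$ gives
\[
\|A_i\|_F^{2}=\operatorname{tr}(\Sigma G^{(i)}\Sigma (G^{(i)})^{*})\le \|\Sigma\|_{\mathrm{op}}\,\operatorname{tr}(\Sigma G^{(i)}(G^{(i)})^{*})=\frac{\|\Sigma\|_{\mathrm{op}}}{\eta}\operatorname{Im}\operatorname{tr}(\Sigma G^{(i)})=\frac{\|\Sigma\|_{\mathrm{op}}\,p\operatorname{Im}m_1^{(i)}}{\eta}.
\]
Hence $|Z_i|\prec \sqrt{\operatorname{Im}m_1^{(i)}/(p\eta)}$, and by Lemma \ref{lem: basictool_greenfunciden} we have $|m_1^{(i)}-m_1|\le C(n\eta)^{-1}$, so $\operatorname{Im}m_1^{(i)}\lesssim \operatorname{Im}m_{1n}+\Theta+(n\eta)^{-1}$, which yields $|Z_i|\prec \Psi_\Theta$.

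\textbf{Bound on $\Lambda_o$.} For $i\neq j$, Lemma \ref{lem: basictool_resolvent} gives
\[
\mathcal{G}_{ij}=z\,\mathcal{G}_{ii}\mathcal{G}_{jj}^{(i)}\mathbf{v}_i^{*}G^{(ij)}\mathbf{v}_j=z\,\mathcal{G}_{ii}\mathcal{G}_{jj}^{(i)}\,\xi_i\xi_j\,\mathbf{w}_i^{*}(T^{*}G^{(ij)}T)\mathbf{w}_j.
\]
On $\Xi$ (or for $\eta\ge 1$) the prefactors $z\mathcal{G}_{ii}\mathcal{G}_{jj}^{(i)}\xi_i\xi_j$ are $\rO_\prec(1)$, so by the bilinear part of Lemma \ref{lem: basictool_largedevia}(2) (applied with respect to the conditional randomness of $\mathbf{w}_j$ given $\mathbf{w}_i$ and $G^{(ij)}$),
\[
|\mathcal{G}_{ij}|\prec p^{-1}\|T^{*}G^{(ij)}T\|_F\prec \sqrt{\operatorname{Im}m_1^{(ij)}/(p\eta)}\prec \Psi_\Theta,
\]
by the same Frobenius/Ward calculation as above together with Lemma \ref{lem: basictool_greenfunciden}. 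In the uniform-on-sphere case one must first pass to an independent Rademacher/sign randomization (as in the argument surrounding \eqref{eq_argumentkey} in the paper) to decouple $\mathbf{w}_i$ and $\mathbf{w}_j$ before invoking the bilinear large deviation bound; this is the step requiring the most care.

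\textbf{Main obstacle.} The essential technical difficulty is that, unlike in the i.i.d.\ Gaussian setting where rows and columns are genuinely independent, for $\mathbf{w}_i=\mathbf{u}_i\in U(\mathbb{S}^{p-1})$ the coordinates are only weakly dependent through the unit-norm constraint. This forces us either (i) to run the large deviation bound via the Rademacher/sign-flip symmetrization, or (ii) to use the exact formulas \eqref{lem: basictool_momentsofu} to control the mixed moments that arise when writing $Z_i$ and $\mathbf{v}_i^{*}G^{(ij)}\mathbf{v}_j$ as sums over coordinates. Either route yields a bound differing from the Gaussian one only by $\rO(p^{-1})$ corrections, which are absorbed into $\Psi_\Theta$ since $\Psi_\Theta\gtrsim p^{-1/2}$ on $\Xi$. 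Once the decoupling is handled, everything else is routine manipulation of the Ward identity and the perturbation estimate for $m_1^{(\mathcal{T})}$.
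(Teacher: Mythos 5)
Your proof is correct and follows essentially the same route as the paper: for both $Z_i$ and $\mathcal{G}_{ij}$ you apply the resolvent identities of Lemma~\ref{lem: basictool_resolvent} together with the large-deviation bounds of Lemma~\ref{lem: basictool_largedevia}(2), and then control the resulting Frobenius norm via the Ward identity to land on $\sqrt{\operatorname{Im}m_1^{(\mathcal{T})}/(p\eta)}\prec\Psi_\Theta$, which is exactly the paper's computation. One small quibble: the Rademacher symmetrization you flag as a necessary extra step for the bilinear form $\mathbf{w}_i^*(T^*G^{(ij)}T)\mathbf{w}_j$ is not actually needed here, since $\mathbf{w}_i$ and $\mathbf{w}_j$ are independent and Lemma~\ref{lem: basictool_largedevia}(2) already covers the spherical case directly — the symmetrization trick surrounding \eqref{eq_argumentkey} is used elsewhere in the paper to handle dependence of coordinates \emph{within} a single column, which is a different situation.
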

\begin{proof}
Let $\mathcal{I}=\{1,2,3,\cdots,n\}.$ Applying Lemmas \ref{lem: basictool_resolvent}, \ref{lem: basictool_largedevia} and \ref{thm_main_asymptotic laws}, we obtain that uniformly for $z\in\mathbf{D}$ and $i,j\in\mathcal{I}$ with $i\neq j$,
    \begin{gather}\label{eq: entrywise_eq1}
        \mathbf{1}(\Xi)|\mathcal{G}_{ij}|\le\mathbf{1}(\Xi)|z||\xi_i\xi_j||\mathcal{G}_{ii}\mathcal{G}_{jj}^{(i)}||\mathbf{v}_i^{*}G^{(ij)}\mathbf{v}_j|\prec\mathbf{1}(\Xi)|\mathcal{G}_{ii}\mathcal{G}_{jj}^{(i)}||\xi_i\xi_j|\frac{1}{p}\|\Sigma^{1/2}\|\|G^{(ij)}\Sigma^{1/2}\|_F.
    \end{gather}
Together with Lemmas \ref{lem: basictool_resolvent} and \ref{lem: basictool_greenfunciden}, we can see that    
\begin{gather*}
    \mathbf{1}(\Xi)|\mathcal{G}_{ij}|\prec\sqrt{\frac{\operatorname{Im}m_{1}^{(ij)}}{p\eta}}\prec\sqrt{\frac{\operatorname{Im}m_{1n}+\Theta}{p\eta}}+\frac{1}{p\eta}.
\end{gather*}
Therefore, $\mathbf{1}(\Xi)\Lambda_o\prec\Psi_{\Theta}$. The result for $Z_i$ under $\Xi$ can be derived similarly  in the sense that 
\begin{gather*}
    \mathbf{1}(\Xi)Z_i=\mathbf{1}(\Xi)\big(\mathbf{v}_i^{*}G^{(i)}\mathbf{v}_i-\frac{\xi^2_i}{p}\operatorname{tr}(G^{(i)}\Sigma)\big)\prec\mathbf{1}(\Xi)\frac{\xi^2_i}{p}\|\Sigma^{1/2}\|\|G^{(i)}\Sigma^{1/2}\|_F \prec \Psi_{\Theta}.
\end{gather*}
Similar procedure can be applied to $\mathcal{G}_{ij}$ and $Z_i$ via (\ref{lem: basictool_prior}). This completes the proof. 
\end{proof}

The following lemma is a counterpart of Lemma 5.10 of \cite{yang2019edge}. 
\begin{lemma}\label{lem: entrywiselocallaw_lem2}
    Suppose assumptions in Theorem \ref{thm_main_locallaws} hold. Then
    \begin{align}
        \mathbf{1}(\eta\ge 1)|F_p(m_1(z),z)|\prec p^{-1/2},\quad \mathbf{1}(\eta\ge1)\left|m_2(z)+\frac{1}{p}\sum_{i=1}^n\frac{\xi^2_i}{z(1+\xi^2_im_{1}(z))}\right|\prec p^{-1/2}, \label{eq_mmmmmmmm1} \\
        \mathbf{1}(\Xi)|F_p(m_1(z),z)|\prec \Psi_{\Theta},\quad \mathbf{1}(\Xi)\left|m_2(z)+\frac{1}{p}\sum_{i=1}^n\frac{\xi^2_i}{z(1+\xi^2_im_{1}(z))}\right|\prec \Psi_{\Theta}. \label{eq_mmmmmmmm}
    \end{align}
    Moreover, we have the finer estimates
    \begin{gather}\label{eq: locallaw_finerest_F}
        \mathbf{1}(\Xi)|F_p(m_1(z),z)|\prec\mathbf{1}(\Xi)\Big(|\mathbf{Z}_1|+|\mathbf{Z}_2|\Big)+\Psi_{\Theta}^2,
    \end{gather}
    and 
    \begin{gather}\label{eq: locallaw_finerest_m}
       \mathbf{1}(\Xi) |m_2(z)+\frac{1}{p}\sum_{i=1}^n\frac{\xi^2_i}{z(1+\xi^2_im_{1}(z))}|\prec\mathbf{1}(\Xi)\big(|\mathbf{Z}_2|\big)+\Psi_{\Theta}^2,
    \end{gather}
    where 
    \begin{gather*}
        \mathbf{Z}_1:=\frac{1}{p}\sum_{j=1}^n\frac{\xi^2_j}{z(1+\xi^2_jm^{(j)}_1(z))}Z_j,\quad \mathbf{Z}_2:=\frac{1}{p}\sum_{j=1}^n\frac{\xi^2_j}{z(1+\xi^2_jm^{(j)}_1(z))^2}Z_j.
    \end{gather*}
\end{lemma}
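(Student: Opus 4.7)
The plan is to derive both \eqref{eq: locallaw_finerest_F} and \eqref{eq: locallaw_finerest_m} by a careful expansion of the diagonal entries $\mathcal{G}_{ii}$. The starting point is the Schur complement $\mathcal{G}_{ii}^{-1}=-z(1+\mathbf{v}_i^*G^{(i)}\mathbf{v}_i)$ from Lemma \ref{lem: basictool_resolvent}. Since $\mathbf{v}_i=\xi_iT\mathbf{w}_i$ with $\mathbf{w}_i$ independent of $G^{(i)}$, Lemma \ref{lem: basictool_largedevia}(2) yields
\[
\mathbf{v}_i^*G^{(i)}\mathbf{v}_i=\xi_i^2m_1^{(i)}+Z_i,
\]
with $|Z_i|\prec\Psi_{\Theta}$ on $\Xi$ by Lemma \ref{lem: entrywiselocallaw_lem1}, and $|Z_i|\prec p^{-1/2}$ when $\eta\ge 1$ via the deterministic bound $\|G^{(i)}\|\le\eta^{-1}\le 1$. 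Lemma \ref{lem: basictool_greenfunciden} gives $|m_1-m_1^{(i)}|\le C(n\eta)^{-1}\lesssim\Psi_{\Theta}^2$, so $m_1^{(i)}$ can be freely replaced by $m_1$ in leading-order calculations.

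For \eqref{eq: locallaw_finerest_m}, I invert $\mathcal{G}_{ii}^{-1}$ and Taylor-expand. Since $|Z_i|/|1+\xi_i^2m_1|\prec\Psi_{\Theta}$ by \eqref{eq: asymlaws_main_lowerbound}, the geometric series gives
\[
\mathcal{G}_{ii}=-\frac{1}{z(1+\xi_i^2m_1)}+\frac{Z_i}{z(1+\xi_i^2m_1)^2}+\mathrm{O}_\prec(\Psi_{\Theta}^2),
\]
and multiplying by $\xi_i^2/p$ and summing over $i$ yields \eqref{eq: locallaw_finerest_m} with leading fluctuation $\mathbf{Z}_2$. The rough bounds on $m_2$ in \eqref{eq_mmmmmmmm1}--\eqref{eq_mmmmmmmm} then follow from $|\mathbf{Z}_2|\prec\Psi_{\Theta}$ (a standard large-deviation bound on the weighted sum of $Z_j$'s).

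For \eqref{eq: locallaw_finerest_F}, the strategy is a weighted-trace identity. For any $p\times p$ matrix $M$, Sherman--Morrison gives $\mathbf{v}_i^*GM\mathbf{v}_i=-z\mathcal{G}_{ii}\mathbf{v}_i^*G^{(i)}M\mathbf{v}_i$, and large deviation gives $\mathbf{v}_i^*G^{(i)}M\mathbf{v}_i=\xi_i^2p^{-1}\operatorname{tr}(M\Sigma G^{(i)})+Y_i^M$ with $|Y_i^M|\prec\Psi_{\Theta}$. Summing and comparing with $\sum_i\mathbf{v}_i^*GM\mathbf{v}_i=\operatorname{tr}(M)+z\operatorname{tr}(MG)$ (from the identity $V^*GV=I+z\mathcal{G}$ recorded before Lemma \ref{lem: basictool_resolvent}) yields the approximate identity
\[
z\operatorname{tr}\bigl[M(1+m_2\Sigma)G\bigr]\approx-\operatorname{tr}(M).
\]
Choosing $M=\Sigma(1+m_2\Sigma)^{-1}$, whose operator norm is bounded on $\Xi$ by \eqref{eq: asymlaws_main_lowerbound}, collapses the left-hand side to $zpm_1$ and the right-hand side to $-\sum_i\sigma_i/(1+\sigma_im_2)$, producing
\[
m_1+\frac{1}{p}\sum_i\frac{\sigma_i}{z(1+\sigma_im_2)}=\mathbf{Z}_1+\mathrm{O}_\prec(\Psi_{\Theta}^2),
\]
which is \eqref{eq: locallaw_finerest_F} after using Step~2 to replace $m_2$ by $\tilde m_2:=-p^{-1}\sum_j\xi_j^2/[z(1+\xi_j^2m_1)]$ appearing inside $F_p$.

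The main obstacle is the precise identification of the error in the last display as $\mathbf{Z}_1$ (rather than merely a $\Psi_{\Theta}$ bound). The fluctuations $Y_i^M$ produced by the weighted-trace argument depend on the test matrix $M$, and aligning them with the canonical $Z_j$-weights $\xi_j^2/[z(1+\xi_j^2m_1^{(j)})]$ requires a second Schur expansion that exploits the specific form $M=\Sigma(1+m_2\Sigma)^{-1}$ and the stability of the two-equation system near $\lambda_+$ (Lemma \ref{thm_main_asymptotic laws}). This bookkeeping follows the scheme of \cite[Lemma 5.10]{yang2019edge} adapted to the elliptical large-deviation setting. The rough bounds for $F_p$ in \eqref{eq_mmmmmmmm1}--\eqref{eq_mmmmmmmm} then follow from $|\mathbf{Z}_1|\prec\Psi_{\Theta}$.
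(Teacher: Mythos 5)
Your proof of \eqref{eq: locallaw_finerest_m} is precisely the paper's: a Schur-complement expansion of $\mathcal{G}_{jj}$ to second order in $Z_j$, followed by averaging against $p^{-1}\xi_j^2$. That part is fine.

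For \eqref{eq: locallaw_finerest_F} you take a genuinely different route. The paper invokes a ready-made resolvent decomposition (B.14 of \cite{Ding2023}),
\be
G=-z^{-1}(I+m_2\Sigma)^{-1}+R_1+R_2,
\end{equation*}
traces it against $\Sigma$, and reads off $\mathbf{Z}_1,\mathbf{Z}_2$ directly from $\operatorname{tr}(R_1\Sigma)$ and $\operatorname{tr}(R_2\Sigma)$ because $R_1$ already contains the rank-one increments $\mathbf{v}_i\mathbf{v}_i^*-p^{-1}\xi_i^2\Sigma$ whose expansion produces the canonical $Z_i$'s. You instead try to reproduce the self-consistent equation through the weighted-trace identity $\sum_i\mathbf{v}_i^*GM\mathbf{v}_i=\operatorname{tr}(M)+z\operatorname{tr}(MG)$ with $M=\Sigma(1+m_2\Sigma)^{-1}$. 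The algebra you carry out is correct and does recover the leading deterministic identity. However there are two real problems, and you are right to flag the first yourself.

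First, your large-deviation step treats $Y_i^M:=\mathbf{v}_i^*G^{(i)}M\mathbf{v}_i-\xi_i^2p^{-1}\operatorname{tr}(M\Sigma G^{(i)})$ as a standard quadratic-form fluctuation, but the test matrix $M=\Sigma(1+m_2\Sigma)^{-1}$ depends on $m_2=p^{-1}\sum_j\xi_j^2\mathcal{G}_{jj}$ and hence on $\mathbf{w}_i$. You must first replace $m_2\to m_2^{(i)}$ (this costs $O((p\eta)^{-1})$ and is controllable), and after that the resulting $Y_i^{M^{(i)}}$ is a quadratic form in $G^{(i)}\Sigma(1+m_2^{(i)}\Sigma)^{-1}$ — it is \emph{not} a scalar multiple of $Z_i=\mathbf{v}_i^*G^{(i)}\mathbf{v}_i-\xi_i^2p^{-1}\operatorname{tr}(G^{(i)}\Sigma)$. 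So the ``second Schur expansion'' you allude to is not mere bookkeeping: you would need an additional self-consistent identity to convert the $M$-weighted fluctuations into $\xi_j^2/[z(1+\xi_j^2m_1^{(j)})]$-weighted $Z_j$'s, i.e.\ into $\mathbf{Z}_1$. Without this the finer estimate \eqref{eq: locallaw_finerest_F} does not follow; only the rough bound $\prec\Psi_\Theta$ does. The paper's choice of starting decomposition bakes in exactly the $Z_i$-structure and avoids this issue.

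Second, for the $\eta\ge1$ statements in \eqref{eq_mmmmmmmm1} the $\Psi_\Theta$-bound is not the obstacle — $\|G^{(i)}\|\le1$ handles the fluctuation trivially — but the paper separately proves that with high probability $|1+\xi_j^2m_1(z)|\ge c'$ and $|1+\sigma_im_2(z)|\ge c'$ (their equation \eqref{eq: lowerbound_m12_eta>1}) for the \emph{random} $m_1,m_2$ at $\eta\ge1$, following the stability argument of \cite{yang2019edge}. You invoke Lemma~\ref{thm_main_asymptotic laws}(3), but that controls the deterministic $m_{1n},m_{2n}$; transferring it to $m_1,m_2$ is automatic only on $\Xi$, not a priori at $\eta\ge1$. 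That step should be made explicit.
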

\begin{proof}
    We begin the proof with the finer estimates (\ref{eq: locallaw_finerest_F}) and (\ref{eq: locallaw_finerest_m}). By Lemmas \ref{lem: basictool_resolvent}, \ref{lem: basictool_greenfunciden}, \ref{lem: basictool_largedevia} and \ref{thm_main_asymptotic laws}
    \begin{gather}\label{eq: expan_cal G}
        \mathbf{1}(\Xi)\mathcal{G}_{jj}=\mathbf{1}(\Xi)\Big(-\frac{1}{z(1+\xi_j^2m_1(z))}+\frac{Z_j}{z(1+\xi^2_jm_1^{(j)}(z))^2}+\rO_{\prec}(\Psi_{\Theta}^2)\Big),
    \end{gather}
which results in (\ref{eq: locallaw_finerest_m}) by taking average with $p^{-1}\sum_j\xi^2_j$. Moreover, using (B.14) of \cite{Ding2023}, we see that 
    \begin{gather*}
    \begin{split}
       G&=-z^{-1}(I+m_2(z)\Sigma)^{-1}+z^{-1}\sum_{i=1}^n\frac{G^{(i)}(\mathbf{v}_i\mathbf{v}_i^{*}-p^{-1}\xi^2_i\Sigma)}{1+\mathbf{v}_i^{*}G^{(i)}\mathbf{v}_i}(I+m_2(z)\Sigma)^{-1}+z^{-1}\frac{1}{p}\sum_{i=1}^n\frac{(G^{(i)}-G)\xi^2_i\Sigma}{1+\mathbf{v}_i^{*}G^{(i)}\mathbf{v}_i}(I+m_2(z)\Sigma)^{-1}\\
    &:=-z^{-1}(I+m_2(z)\Sigma)^{-1}+R_1+R_2.
    \end{split}
    \end{gather*}
    Taking average with $p^{-1}\sum_i\sigma_i$, we have
     \begin{gather}\label{eq_m1}
         m_1(z)=\frac{1}{p}\operatorname{tr}(G(z)\Sigma)=-\frac{1}{p}\sum_{i=1}^p\frac{\sigma_i}{z(1+m_2(z)\sigma_i)}+\frac{1}{p}\operatorname{tr}(R_1\Sigma)+\frac{1}{p}\operatorname{tr}(R_2\Sigma).
     \end{gather}
Similar to (\ref{eq: expan_cal G}), we have that
     \begin{gather*}
         \mathbf{1}(\Xi)\Big(\frac{1}{p}\operatorname{tr}(R_1\Sigma)\Big)=\mathbf{1}(\Xi)\rO_{\prec}(\mathbf{Z}_1+\Psi^2_{\Theta}),\quad
         \mathbf{1}(\Xi)\Big(\frac{1}{p}\operatorname{tr}(R_2\Sigma)\Big)=\mathbf{1}(\Xi)\rO_{\prec}(\mathbf{Z}_2+\Psi^2_{\Theta}).
     \end{gather*}
This results in (\ref{eq: locallaw_finerest_F}) using (\ref{eq:F(m,z)}). 

Armed with the above results, (\ref{eq_mmmmmmmm}) follows from       Lemma \ref{lem: entrywiselocallaw_lem1}. For (\ref{eq_mmmmmmmm1}), the arguments are similar except we need to prove that 
in the case $\eta\ge1$, with high probability, for all $1 \leq i \leq p, 1 \leq j \leq n$ and some constant $c'>0$
     \begin{gather}\label{eq: lowerbound_m12_eta>1}
         |1+\xi^2_jm_1|\ge c^{\prime},\quad |1+\sigma_im_2|\ge c^{\prime}.
     \end{gather}
The proof of (\ref{eq: lowerbound_m12_eta>1}) follows lines between equations (5.43) and (5.45) of \cite{yang2019edge} and we omit the details. This concludes our proof.           
\end{proof}

The following lemma is a counterpart of Lemma 5.11 of \cite{yang2019edge}. 
\begin{lemma}\label{lem: stability of F}
    Suppose assumptions in Theorem \ref{thm_main_locallaws} hold. Suppose a $z$-dependent function $\delta$ satisfying $p^{-1}\le\delta(z)\le\log^{-1}p$ for $z\in\mathbf{D}$ and assume that $\delta(z)$ is Lipschitz continuous with Lipschitz constant $\le p^{2}$. Suppose moreover that for each fixed $E$, the function $z\mapsto\delta(E+\mathrm{i}\eta)$ is non-increasing for $\eta>0$. Suppose that $\mu_0:\mathbf{D}\rightarrow\mathbb{C}$ is the Stieltjes transform of a probability measure. Let $z\in\mathbf{D}$ and suppose that for all $z^{\prime}\in\operatorname{Lip}(z)$, we have $|F_p(\mu_0,z)|\le\delta(z^{\prime}),$ then we have that for some constant $C>0$ 
    \begin{gather*}
        |\mu_0-m_{1n}(z)|\le\frac{C\delta}{\sqrt{\kappa+\eta+\delta}}.
    \end{gather*}
\end{lemma}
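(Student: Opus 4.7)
The plan is to establish the bound via a Taylor expansion of $F_p(\,\cdot\,,z)$ around the true root $m_{1n}(z)$, combined with a continuity/bootstrap argument in $\eta$ that uses the Lipschitz hypothesis and the monotonicity of $\delta(E+\mathrm{i}\eta)$ in $\eta$. This is the standard stability scheme used in the analogous step of \cite{yang2019edge}, and the key structural input is the square-root edge behavior established in Lemma~\ref{thm_main_asymptotic laws}.

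First I would expand $F_p(\mu_0,z)$ around $x_\star:=m_{1n}(z)$. Since $F_p(x_\star,z)=0$ by Theorem~\ref{lem_solutionsystem}, writing $u:=\mu_0-m_{1n}(z)$ one gets
\begin{equation*}
F_p(\mu_0,z)=\alpha(z)\,u+\beta(z)\,u^{2}+\rO(|u|^{3}),
\end{equation*}
with $\alpha(z):=\partial_x F_p(x_\star,z)$ and $\beta(z):=\tfrac12\partial_x^2F_p(x_\star,z)$. Using (\ref{eq:F(m,z)}) together with parts (2)--(3) of Lemma~\ref{thm_main_asymptotic laws} (i.e.\ $|m_{2n}|\sim 1$ and the lower bounds on $1+\xi_j^2 m_{1n}$, $1+\sigma_i m_{2n}$), one finds $|\beta(z)|\asymp 1$ uniformly on $\mathbf D$. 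The crucial estimate is that $|\alpha(z)|\asymp\sqrt{\kappa+\eta}$: this follows by differentiating the defining relation $F_p(m_{1n}(z),z)=0$ in $z$, which yields $m_{1n}'(z)=-\partial_y F_p/\partial_x F_p$, and then combining the square-root decay $m_{1n}(z)-m_{1n}(\lambda_+)\asymp\sqrt{\lambda_+-z}$ from (\ref{eq: asymlaws_main_edge}) with the boundedness of $\partial_y F_p(x_\star,z)$. This gives the quadratic inequality
\begin{equation*}
\bigl|\alpha(z)\,u+\beta(z)\,u^{2}\bigr|\le \delta(z)+\rO(|u|^{3}),\qquad |\alpha(z)|\asymp\sqrt{\kappa+\eta},\quad |\beta(z)|\asymp 1.
\end{equation*}

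Next, on the a priori hypothesis $|u|\le(\log p)^{-1}$ the cubic remainder is absorbed into the quadratic term, and the inequality forces $u$ to lie in one of two disjoint regions: a \emph{small-root regime} $|u|\le C\delta/\sqrt{\kappa+\eta+\delta}$ and a \emph{large-root regime} $|u|\ge c\sqrt{\kappa+\eta+\delta}$. The desired bound is precisely that $u$ lies in the small-root regime. To rule out the large-root regime I would run a standard continuity argument: at the initial scale $\eta_{\max}=\mathtt C$, the elementary bound $\operatorname{Im}\mu_0,\operatorname{Im} m_{1n}\le \mathtt C^{-1}$ together with the condition $|F_p(\mu_0,z)|\le\delta$ places $u$ in the small-root regime trivially. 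I would then decrease $\eta$ continuously along the line $z=E+\mathrm i \eta$; the Lipschitz hypothesis on $\delta$ (with constant $\le p^{2}$) and the monotonicity of $\delta(E+\mathrm i \eta)$ in $\eta$ ensure that the gap between the two regimes remains strictly positive on a discrete $p^{-3}$-net of $\eta$-values, while $\mu_0$ and $m_{1n}$ are Lipschitz in $z$ with constant $\eta^{-2}\le p^{2}$ (by standard resolvent bounds). A union bound over this net then forces $u$ to remain in the small-root regime for every $\eta$ down to $p^{-1+\varepsilon_e}$, which yields the claim.

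The main obstacle I anticipate is verifying the two-sided estimate $|\alpha(z)|\asymp\sqrt{\kappa+\eta}$ with explicit constants that are uniform in $\mathbf D$. The upper bound is routine from differentiating (\ref{eq_systemequationsm1m2elliptical}) and applying the edge asymptotics, but the matching lower bound is the point at which Assumption~\ref{assum_techniqueSigma} is genuinely used: without the regularity condition (\ref{eq: reg of sigma}) and the dichotomy on $d$, the denominator $1+\sigma_1 m_{2n,c}(L_+)$ could vanish and $\alpha$ could degenerate, destroying the square-root behavior. Once this lower bound is secured, everything else reduces to manipulating the quadratic inequality and propagating it via the continuity/monotonicity argument described above.
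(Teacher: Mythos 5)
Your proposal correctly reconstructs the argument that the paper defers to \cite[Lemma 5.11]{yang2019edge}: Taylor-expand $F_p$ around the root $m_{1n}(z)$, exploit the square-root degeneracy $|\partial_x F_p(m_{1n}(z),z)|\asymp\sqrt{\kappa+\eta}$ (the point where Assumption~\ref{assum_techniqueSigma} and Lemma~\ref{thm_main_asymptotic laws} enter), obtain the small-root/large-root dichotomy from the resulting quadratic inequality, and rule out the large root by a continuity/net argument in $\eta$ that uses the Lipschitz bound and monotonicity of $\delta$. This is exactly the mechanism the paper invokes, and you correctly flag where the edge regularity is indispensable; the only spot I would tighten is the initialization at $\eta=\mathtt C$, where the claim that $u$ lies ``trivially'' in the small-root regime deserves a one-line justification (e.g.\ that there $\kappa+\eta\gtrsim1$ makes $\partial_xF_p$ uniformly bounded below, so $|F_p|\le\delta\le\log^{-1}p$ directly forces $|u|\lesssim\delta$), rather than appealing only to $\operatorname{Im}\mu_0,\operatorname{Im}m_{1n}\le\mathtt C^{-1}$, which by itself does not bound the difference.
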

\begin{proof}
    The proof of this lemma of relies on the first and second statements in Lemma \ref{thm_main_asymptotic laws}. One can refer to \cite[Lemma 5.11]{yang2019edge} for more details.
\end{proof}

With Lemma \ref{lem: stability of F}, we see from \eqref{eq_mmmmmmmm1}, (\ref{eq_mmmmmmmm}) and \eqref{eq: lowerbound_m12_eta>1} that 
\begin{gather}\label{eq: est_theta_eta>1}
    \mathbf{1}(\eta\ge1)\Theta(z)\prec p^{-1/2}.
\end{gather}
Then from Lemma \ref{lem: entrywiselocallaw_lem1}, we have for the off-diagonal entries, $\mathbf{1}(\eta\ge1)\Lambda_o\prec p^{-1/2}.$
For the diagonal entries, using Lemma \ref{lem: entrywiselocallaw_lem1}, \eqref{eq: est_theta_eta>1} and \eqref{eq: lowerbound_m12_eta>1} with the following expression
\begin{gather*}
    \mathbf{1}(\eta\ge1)\mathcal{G}_{ii}=\mathbf{1}(\eta\ge1)\Big(-\frac{1}{z(1+\xi^2_im_1)}+\frac{Z_i}{z(1+\xi^2_im_1)^2}+\rO_{\prec}(\Psi_{\Theta}^2)\Big),
\end{gather*}
we obtain that $\mathbf{1}(\eta\ge1)(|\mathcal{G}-\Pi_2|_{ii})\prec p^{-1/2}$ for all $1 \leq i \leq n.$ This yields that $\mathbf{1}(\eta\ge 1)\Lambda(z)\prec p^{-1/2}.$ It remains to deal with the small $\eta<1$. The following weak bound is a counterpart of Lemma 5.12 of \cite{yang2019edge}.

\begin{lemma}[Weak entrywise local law]\label{lem: weak entrywise local law}
    Suppose assumptions in Theorem \ref{thm_main_locallaws} hold. Then we have  $\Lambda(z)\prec(p\eta)^{-1/4}$ uniformly for $z\in\mathbf{D}$.
\end{lemma}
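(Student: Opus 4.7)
The plan is to run a bootstrap/continuity argument in the spectral parameter $\eta$, starting from the regime $\eta\ge 1$ where the stronger bound $\Lambda\prec p^{-1/2}\le (p\eta)^{-1/4}$ has already been established in the discussion following \eqref{eq: est_theta_eta>1}. First, I would discretize $\mathbf D$ by a lattice $\mathcal L$ of mesh $p^{-10}$, so that $|\mathcal L|\le p^{C}$. Using the deterministic bounds $\|G\|+\|\mathcal G\|\le\eta^{-1}$ and $\|\partial_z G\|+\|\partial_z\mathcal G\|\le\eta^{-2}$ from Lemma \ref{lem: basictool_greenfunciden}, together with the smoothness of $\Pi_{1,2}$ that follows from Lemma \ref{thm_main_asymptotic laws}, both $\Lambda(z)$ and $\Theta(z)$ are $p^{C}$-Lipschitz on $\mathbf D$. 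Thus it suffices to verify the bound pointwise on $\mathcal L$, and a union bound over $z\in\mathcal L$ absorbs the polynomial losses inherent in $\prec$.

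Next, fix $E\in[\lambda_{+}-\mathtt c,\lambda_{+}+\mathtt C]$ and decrease $\eta$ from $1$ down to $p^{-1+\varepsilon_e}$ along $\mathcal L$ in geometric steps of ratio $1-p^{-2}$. At each step, the Lipschitz propagation of the bound established at the previous lattice point guarantees $\Lambda(z)\le(\log p)^{-1}$, so the indicator $\mathbf 1(\Xi)$ may be dropped. On $\Xi$, Lemma \ref{lem: entrywiselocallaw_lem1} yields $\Lambda_o+\max_i|Z_i|\prec\Psi_\Theta$, while the refined identities \eqref{eq: locallaw_finerest_F}--\eqref{eq: locallaw_finerest_m} of Lemma \ref{lem: entrywiselocallaw_lem2} produce the key bound $|F_p(m_1(z),z)|\prec\Psi_\Theta$, plus its analogue for the second component of $\Theta$.

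Then I would apply the stability Lemma \ref{lem: stability of F} with $\delta\sim\Psi_\Theta$ to obtain
\begin{gather*}
|m_1(z)-m_{1n}(z)|\lesssim\frac{\Psi_\Theta}{\sqrt{\kappa+\eta+\Psi_\Theta}},
\end{gather*}
and a parallel estimate for $|m_2-m_{2n}|$ via the expression in \eqref{eq: locallaw_finerest_m}. Using $\operatorname{Im}m_{1n}=\mathrm{O}(1)$ from Lemma \ref{thm_main_asymptotic laws} and $\Psi_\Theta^2\lesssim (p\eta)^{-1}+\Theta(p\eta)^{-1}$, this self-consistent inequality closes to give $\Theta\prec (p\eta)^{-1/4}$, the worst case arising when $\kappa+\eta\lesssim (p\eta)^{-1/2}$, where the square-root denominator only contributes an extra factor $(p\eta)^{1/4}$. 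Feeding $\Theta\prec (p\eta)^{-1/4}$ back into $\Psi_\Theta$ gives $\Lambda_o\prec (p\eta)^{-1/4}$ via Lemma \ref{lem: entrywiselocallaw_lem1}, and the diagonal expansion \eqref{eq: expan_cal G} controls $|\mathcal G_{ii}-(\Pi_2)_{ii}|$ by the same bound, so $\Lambda(z)\prec (p\eta)^{-1/4}$ at the current lattice point, closing the induction.

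The principal obstacle is ensuring the induction actually closes: at each lattice point the stability estimate must produce a \emph{strict} improvement over the Lipschitz-propagated bound from the previous point, and the $p^{\epsilon}$ losses hidden in $\prec$ must not accumulate over the $\sim p^{C}$ steps down to $\eta=p^{-1+\varepsilon_e}$. The square-root decay of $\rho$ near $\lambda_{+}$ supplied by Lemma \ref{thm_main_asymptotic laws} is exactly what generates the denominator $\sqrt{\kappa+\eta+\delta}$ and thereby the rate $(p\eta)^{-1/4}$; a sharper rate at this stage is not available because the fluctuation-averaging refinements that would remove the square-root loss have not yet been invoked. This weak bound, however, suffices as the input for the strong local laws stated in Theorem \ref{thm_main_locallaws}, which would subsequently be proved by a fluctuation-averaging argument along the lines of \cite{yang2019edge}.
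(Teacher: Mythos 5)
Your proposal is correct and matches the paper's intended proof, which defers the details to the continuity/bootstrapping argument of Lemma 5.12 in \cite{yang2019edge}; the inputs you cite---the $\eta\ge1$ estimate $\Lambda\prec p^{-1/2}$ as the starting point, lattice discretization with Lipschitz propagation to dispose of $\mathbf{1}(\Xi)$, Lemmas \ref{lem: entrywiselocallaw_lem1}--\ref{lem: stability of F}, and the self-consistent closure at $\Theta\prec(p\eta)^{-1/4}$ and thence $\Lambda\prec(p\eta)^{-1/4}$---are exactly the ones that reference uses.
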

\begin{proof}
One can prove this lemma using a continuity argument. The key inputs are the estimates in the $\eta \geq 1$ case. All
the other parts of the proof are essentially the same as Lemma 5.12 of \cite{yang2019edge}. We omit the details. 
\end{proof}

The following lemma is the last component for the proof of (\ref{eq_entrywise}) which is a counterpart of Lemma 5.13 of \cite{yang2019edge}.  
\begin{lemma}[Fluctuation Averaging]\label{lem: fluctuation averaging}
    Suppose assumptions in Theorem \ref{thm_main_locallaws} hold. Let $\nu\in[1/4,1]$. Denote $\Psi_{\nu}:=\sqrt{\frac{\operatorname{Im}m_{1n}+(p\eta)^{-\nu}}{p\eta}}+\frac{1}{p\eta}$. Suppose moreover that $\Lambda\prec(p\eta)^{-\nu}$ uniformly for $z\in\mathbf{D}$. Then we have that $|\mathbf{Z}_1|+|\mathbf{Z}_2|\prec\Psi_{\nu}^2.$
\end{lemma}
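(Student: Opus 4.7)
The plan is to invoke the standard high-moment method used throughout the fluctuation-averaging literature (cf.\ Lemma 5.13 of \cite{yang2019edge} and its analogues in \cite{DingandYang2018,PillaiandYin2014}). Fix $\ell\in\{1,2\}$ and write $\mathbf{Z}_\ell=p^{-1}\sum_{j=1}^n c_j^{(\ell)} Z_j$, where the deterministic-looking (in fact $\sigma(W^{(j)})$-measurable) coefficients
\[
c_j^{(\ell)}:=\frac{\xi_j^2}{z\bigl(1+\xi_j^2 m_1^{(j)}(z)\bigr)^\ell}
\]
satisfy $|c_j^{(\ell)}|\prec 1$ uniformly in $j$ and $z\in\mathbf D$ by Assumption \ref{assum_D}, \eqref{eq: asymlaws_main_lowerbound} and the standard interlacing bound $|m_1-m_1^{(j)}|\prec(p\eta)^{-1}$ from Lemma \ref{lem: basictool_greenfunciden}. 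Since $|\mathbf{Z}_\ell|\prec\Psi_\nu$ holds trivially from Lemma \ref{lem: entrywiselocallaw_lem1}, the goal is to gain an extra factor of $\Psi_\nu$ by exploiting cancellations across $j$, i.e.\ the fact that $\mathbb{E}_j Z_j=0$.

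The main step is to bound $\mathbb{E}|\mathbf{Z}_\ell|^{2s}$ for arbitrary fixed integer $s\ge 1$ and then conclude by Markov's inequality. I would expand
\[
\mathbb{E}|\mathbf{Z}_\ell|^{2s}
=p^{-2s}\sum_{j_1,\dots,j_{2s}=1}^n
\mathbb{E}\Bigl[\prod_{k=1}^{s} c_{j_k}^{(\ell)} Z_{j_k}\prod_{k=s+1}^{2s}\overline{c_{j_k}^{(\ell)}}\,\overline{Z_{j_k}}\Bigr],
\]
and organize tuples $\mathbf j=(j_1,\dots,j_{2s})$ by their set-partition of repeats. For each tuple, I would replace every factor $G^{(j_k)}$ appearing inside $Z_{j_k}$ by $G^{(\mathbf j)}$ (the resolvent with \emph{all} distinct indices in $\mathbf j$ removed), using the resolvent identity adapted from Lemma \ref{lem: basictool_resolvent} iteratively. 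Each such removal step produces an error that is a product of two entries of resolvents of $\mathcal G$-/$G$-type restricted from the relevant index, and, by Lemma \ref{lem: entrywiselocallaw_lem1} combined with the hypothesis $\Lambda\prec(p\eta)^{-\nu}$, each such factor is $\prec\Psi_\nu$. Once every $G^{(j_k)}$ has been turned into $G^{(\mathbf j)}$, the resulting $Z_{j_k}$ depends on $\mathbf v_{j_k}$ only through the partial expectation $(1-\mathbb{E}_{j_k})[\mathbf v_{j_k}^{*} G^{(\mathbf j)} \mathbf v_{j_k}]$, and because the columns $\mathbf w_{j_k}$ are independent across $k$, any index that appears only once in $\mathbf j$ forces the contribution to vanish up to the expansion errors already extracted.

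Thus only tuples in which every distinct index appears at least twice survive as leading terms. Such tuples have at most $s$ distinct indices, contributing a combinatorial factor $\mathrm{O}(n^s)$, a factor $p^{-2s}$ from the normalization, and a factor $\Psi_\nu^{4s}$ collected from $2s$ applications of Lemma \ref{lem: entrywiselocallaw_lem1} to the remaining $Z$-factors plus the resolvent-expansion errors. Since $n\asymp p$ by \eqref{ass1}, this yields $\mathbb{E}|\mathbf{Z}_\ell|^{2s}\prec \Psi_\nu^{4s}$. Markov's inequality with $s$ arbitrarily large then gives $|\mathbf{Z}_1|+|\mathbf{Z}_2|\prec\Psi_\nu^2$.

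The main obstacle is the careful bookkeeping in the second step: each time a resolvent $G^{(T)}$ is enlarged by one more removed index via the Schur-complement / resolvent identity, one must verify that the emerging correction genuinely contributes a factor $\Psi_\nu$ rather than $\Psi_\nu^{1/2}$ or something worse. This uses the fact that on $\Xi$ both off-diagonal entries of $\mathcal G^{(T)}$ and the $Z$-type functionals are $\mathrm O_\prec(\Psi_\nu)$ by Lemma \ref{lem: entrywiselocallaw_lem1}, together with the uniform bound $|(\mathcal G^{(T)}_{ii})^{-1}|\prec 1$ from \eqref{lem: basictool_prior}. A second subtlety is the elliptical structure: within a single column $\mathbf v_j=\xi_j T\mathbf w_j$ the entries of $\mathbf w_j$ are not independent, but this is irrelevant here because the fluctuation averaging is over different $j$'s, and different columns are genuinely independent; the spherical-vs.\ Gaussian identities in Lemma \ref{lem: basictool_largedevia} already handle the in-column quadratic forms that are absorbed into the $\Psi_\nu$ bound on each $Z_j$.
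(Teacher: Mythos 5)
Your proposal is correct and takes essentially the same route as the paper: the paper simply rewrites $\mathbf Z_1,\mathbf Z_2$ as $p^{-1}\sum_j(1-\mathbb E_j)[\cdot]$ and then cites the standard high-moment fluctuation-averaging argument (Lemma 5.13 of \cite{yang2019edge}, Proposition 4.11 of \cite{Wen2021}), which is exactly the maximal-expansion/no-singleton bookkeeping you sketch. One minor accounting slip worth flagging: for a no-singleton tuple with at most $s$ distinct indices the $Z$-factors alone give $\Psi_\nu^{2s}$, not $\Psi_\nu^{4s}$; the extra $\Psi_\nu^{2s}$ is absorbed from the combinatorial factor $n^s p^{-2s}\asymp p^{-s}\leq\Psi_\nu^{2s}$ (using $\Psi_\nu\gtrsim p^{-1/2}$), so one should not multiply $n^s p^{-2s}$ by $\Psi_\nu^{4s}$ a second time, though the final bound $\mathbb E|\mathbf Z_\ell|^{2s}\prec\Psi_\nu^{4s}$ is still the right one.
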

\begin{proof}
    Notice that $\Xi$ holds with high probability from Lemma \ref{lem: weak entrywise local law}. We can write
    \begin{gather*}
        \mathbf{Z}_1=\frac{1}{p}\sum_{j=1}^n(1-\mathbb{E}_j)\Big[\frac{\xi^2_j}{1+\xi^2_jm_1^{(j)}(z)}\mathbf{v}_j^{*}G^{(j)}\mathbf{v}_j\Big],\quad \mathbf{Z}_2=\frac{1}{p}\sum_{j=1}^n(1-\mathbb{E}_j)\Big[\frac{\xi^2_j}{(1+\xi^2_jm_1^{(j)}(z))^2}\mathbf{v}_j^{*}G^{(j)}\mathbf{v}_j\Big].
    \end{gather*}
Now the method to prove the fluctuation averaging for the random terms under conditional expectation is relatively standard; for example see Lemma 5.13 of \cite{yang2019edge} or Proposition 4.11 of \cite{Wen2021}. We omit the further details here.
\end{proof}

With the above preparation, we prove Theorem \ref{thm_main_locallaws}. We start with the entry-wise local law as in (\ref{eq_entrywise}) following that of Proposition 5.8 of \cite{yang2019edge}. 
\begin{proof}[\bf Proof of (\ref{eq_entrywise})]
 By Lemma \ref{lem: weak entrywise local law},  the event $\Xi$ holds with high probability. According to Lemmas \ref{lem: entrywiselocallaw_lem1} and \ref{lem: weak entrywise local law}, we may take $\nu=1/4$. Then, \eqref{eq: locallaw_finerest_F} reads that $\mathbf{1}(\Xi)|F_p(m_1(z),z)|\prec (\operatorname{Im}m_{1n}(z)+(p\eta)^{-1/4})(p\eta)^{-1}.$ Using the results in Theorem \ref{lem: stability of F}, one has
\begin{gather*}
    |m_1-m_{1n}|\prec\frac{\operatorname{Im}m_{1n}}{p\eta\sqrt{\kappa+\eta}}+\frac{1}{(p\eta)^{5/8}}\prec \frac{1}{(p\eta)^{5/8}},
\end{gather*}
where we used $\operatorname{Im}m_{1n}(z)=\rO(\sqrt{\kappa+\eta})$ from \eqref{eq: asymlaws_main_estm12n}. Similar bound can be derived for $|m_2-m_{2n}|$ by \eqref{eq: locallaw_finerest_m}. Then we get an updated bound $\Theta\prec(p\eta)^{-5/8}.$ Plugging such updated bound in $\Psi_{\Theta}$ in Lemma \ref{lem: entrywiselocallaw_lem1}, one has 
\begin{gather*}
    \Lambda_o\prec\sqrt{\frac{\operatorname{Im}m_{1n}(z)+(p\eta)^{-5/8}}{p\eta}}+\frac{1}{p\eta},
\end{gather*}
uniformly in $z\in\mathbf{D}$. Up to now, we obtain a better bound for $\Lambda_o$. Iterating the above arguments as in the discussions between (5.57) and (5.58) of \cite{yang2019edge}, we get the bound 
\begin{gather}\label{eq: optimal bound for Theta}
    \Theta\prec (p\eta)^{-1}.
\end{gather}
Combining \eqref{eq: expan_cal G} with \eqref{eq: optimal bound for Theta} and Lemma \ref{lem: fluctuation averaging}, we can finish the proof. 
\end{proof}

Then we prove the averaged local laws as in (\ref{eq_averagedone}) and (\ref{eq_averagedtwo}) following that of Proposition 5.1 of \cite{yang2019edge}. 
\begin{proof}[\bf Proof of (\ref{eq_averagedone}) and (\ref{eq_averagedtwo})]
By a discussion similar to (\ref{eq_m1}), we can obtain that 
\begin{gather}\label{eq: expression for m}
    m(z)=-\frac{1}{p}\sum_{i=1}^p\frac{1}{z(1+m_2(z)\sigma_i)}+\rO_{\prec}(\mathbf{Z}_2+\Psi_{\Theta}^2).
\end{gather}
Using \eqref{eq_systemequationsm1m2elliptical}, \eqref{eq: asymlaws_main_lowerbound}, Lemma \ref{lem: fluctuation averaging} and \eqref{eq: optimal bound for Theta}, we can prove (\ref{eq_averagedone}). Moreover, for $z\in\mathbf{D}\cap\{z=E+\mathrm{i}\eta:E\ge\lambda_{+},\;p\eta\sqrt{\kappa+\eta}\ge p^{\varepsilon_e}\}$, using  \eqref{eq: asymlaws_main_estm12n}, we observe that 
\begin{gather}
    \Psi_{\Theta}^2\le2\Big(\frac{\operatorname{Im}m_{1n}(z)}{p\eta}+\frac{1}{(p\eta)^2}\Big)\lesssim\frac{1}{p\sqrt{\kappa+\eta}}+\frac{1}{(p\eta)^2}.
\end{gather}
Taking $\nu=1$ in Lemma \ref{lem: fluctuation averaging}, we can prove (\ref{eq_averagedtwo}) using Lemma \ref{lem: stability of F}. This completes the proof. 
\end{proof}

\bibliographystyle{abbrv}
\bibliography{Bib}

\end{document}